\documentclass[12pt]{article}
\usepackage{amsmath}
\usepackage{amssymb}
\usepackage{amsthm}
\usepackage[hidelinks]{hyperref}
\usepackage{graphicx}
\usepackage{CJKutf8}
\usepackage{tikz-cd}
\usepackage{enumerate}
\usepackage{subcaption}
\usepackage{appendix}
\usepackage{ulem}
\usepackage{mathrsfs}  
\newtheorem{theorem}{Theorem} [section]
\newtheorem*{theorem*}{Theorem}
\newtheorem{proposition}[theorem]{Proposition}
\newtheorem{remark}[theorem]{Remark}

\newtheorem{definition}[theorem]{Definition}
\newtheorem{lemma}[theorem]{Lemma}
\newtheorem{corollary}[theorem]{Corollary}

\newtheorem{claim}{Claim}

\usepackage{breakcites}
\definecolor{c1}{HTML}{7e0f12}

\newcommand{\RR}{\mathbb{R}}  
\newcommand{\HH}{\mathcal{H}}
\newcommand{\MM}{\mathcal{M}}
\newcommand{\NN}{\mathcal{N}}
\newcommand{\KK}{\mathcal{K}}

\newcommand{\UU}{\mathcal{U}}

\newcommand{\dist}[1]{\operatorname{dist}({#1})}

\newcommand{\spt}[1]{\operatorname{spt} {#1} }
\newcommand{\inte}[1]{\operatorname{Int}_{\bar{\Omega}}\left({#1}\right)}

\newcommand{\noopsort}[1]{}

\begin{document}
\title{Some Foundational Results for Free Boundary Brakke Flows}
\author{Yueheng Bao}

\maketitle

\begin{abstract}
In this paper, we establish some geometric and analytic foundations for free boundary Brakke flows. Specifically, we (i) introduce  unit-regular and cyclic free boundary flows and show that they are preserved under reflections and weak limits, (ii) prove that the support of free boundary Brakke flows satisfies an avoidance principle, and (iii) introduce free boundary inner and outer flows and prove the existence of matching free boundary Brakke flows. These results serve as general tools to analyze free boundary flows through singularities, and in particular will be applied in forthcoming work with Haslhofer, where we address the mean-convex neighborhood conjecture and uniqueness conjecture for free boundary flows through (half) cylindrical singularities.
\end{abstract}

\tableofcontents 

\section{Introduction}
In this paper, we fix a smooth open domain $\Omega \subset \RR^{n+1}$. 
For convenience, we will write $S=\partial \Omega$, which is orientable and smooth. 
Free boundary refers to an added constraint that a hypersurface $M$ meets the barrier $S$
 orthogonally at its boundary, similar to the way bubbles touch a glass window. As defined in \cite[Def 1.1]{stahl96} and \cite[Def 1.1]{buckland05}, a smooth free boundary mean curvature flow in $\Omega$ is a family of smooth embeddings $\{M_t\subset \bar{\Omega}\}_{t\in I}$ satisfying the mean curvature flow (MCF) equation
$$\partial_t x=H(x),\ x\in M_t \cap \Omega,$$
and the Neumann boundary condition 
$$\nu_{M_t}(x) \perp \nu_S(x),\ x\in M_t\cap S,$$ 
where $\nu_{M_t}$ is a choice of the unit normal of the hypersurface $M_t$ and $\nu_S$ is the outwards pointing unit normal of the boundary $S$.

Stahl \cite{stahl96} proved long-time existence of the smooth, compact, free-boundary mean curvature flow of hypersurfaces unless curvature blow-up occurs at a finite time singularity. Buckland \cite{buckland05} derived a special monotonicity formula to prove type-I singularities are modeled on (half) shrinkers. 
Edelen \cite{edelen16} proved that certain type-II singularities can be realized by translating solitons. \\

To properly study the flow through singularities, one needs weak solutions.
A free boundary integral Brakke flow in $\Omega$ is given by a family of Radon measures $\{\mu_t\}_{t\in I}$ supported in $\bar{\Omega}$, such that 
\begin{equation}\label{equ:brakke}
\int \phi(\cdot, t)d\mu_t\bigg|_{t=a}^b\le \int_a^b \int\left( -|S^T H|^2\phi+S^TH\cdot D\phi+\partial_t\phi \right)d\mu_tdt
\end{equation}
for all nonnegative test functions $\phi$ with $D_{\nu_S}\phi=0$. Here, $S^TH=H-1_S(H\cdot \nu_S)\nu_S$, and at almost every time associated to $\mu_t$ there is an integral varifold $V_t$ with first variation 
$$\delta V_t(X)=-\int S^TH\cdot X d\mu_t$$
for all $X$ tangential to $\partial \Omega$. So, in particular $V_t$ is a free boundary varifold in the sense of \cite{gruterjost} and \cite{demasi21}. 

Free boundary Brakke flows have been introduced first by Mizuno-Tonegawa \cite{mizunotonegawa15,err-mt15} and Kagaya \cite{kagaya18}, who constructed them as sharp interface limits of the Allen-Cahn equations with Neumann boundary. In an important paper \cite{edelen18}, Edelen systematically developed the theory of free boundary Brakke flows, including in particular a compactness theorem, a notion of reflected Gaussian density $\Theta_\mathrm{refl}$ and the corresponding $\varepsilon$-regularity theorem, and existence via elliptic regularization.
\\

Another important notion of weak solutions are level set solutions.
Similar to the case without boundary, a free boundary subsolution of the MCF is defined to be a family of closed sets satisfying the avoidance principle with all smooth compact free boundary MCFs, that is:
If $\left\{N_t \subset \bar{\Omega} \right\}_{t \in\left[t_0, t_1\right]}$ is a smooth compact free boundary MCF with $M_{t_0} \cap N_{t_0}=\emptyset$, then $M_t \cap N_t=\emptyset$ for all $t \in\left[t_0, t_1\right]$. 
Apparently, the choice of subsolution is highly nonunique, thus one defines the free boundary level set flow $F_t(M)$ to be the maximal one, i.e.,
\begin{equation}\label{equ:levelsetflow}
F_{t}(M)=\overline{\bigcup \{M_{t}|\{M_{t'}\}_{t'\in [0,+\infty)}\text{ is a subsolution with } M_0=M\}.}
\end{equation}
The level set flow with free boundary has been explored first by Giga-Sato \cite{gs93} and Sato \cite{sato94}, where the theory of viscosity solutions was used to solve the level set MCF equation
$$
  u_t = |Du| \, \mathrm{div}\!\left( \frac{Du}{|Du|} \right)
  \quad \text{in } \Omega \times (0,T),
$$
with the Neumann boundary condition
$
  D_{\nu_{S}}u = 0
  \quad \text{on } \partial\Omega \times (0,T),
$
and any continuous initial condition.
\\

For the free boundary flow of mean-convex hypersurfaces, which can be described either as Brakke solution or as level set solution, a precise regularity and structure theory, paralleling the one by White \cite{whi-size, whi-nature} has been developed by Edelen-Haslhofer-Ivaki-Zhu \cite{ehiz22}. On the other hand, several recent developments for surfaces without boundary, in particular the solution of the mean-convex neighborhood conjecture \cite{chh,chhw,bl25a,bl25b},
 the proof of the genericity conjecture \cite{ccms_inven,ccms_duke,ccs,ccms_crel},
and the multiplicity-one conjecture \cite{bk23}, naturally take place in the framework of unit-regular, cyclic, integral Brakke flows associated to outer/inner flows, which has not been available yet for flows with boundary. The purpose of the present paper is to establish such geometric and analytic foundations for flows with free boundary. \\

\textbf{Main results.}
Our first results concern new notions of unit-regular and cyclic free boundary flows, and their preservation under limits and reflections. To discuss this, recall that in the setting without boundary, Schulze and White \cite{sw20} (see also White \cite{whi05localreg}) introduced a notion of unit-regular flows that prevents sudden vanishing of smooth points. Specifically, they called a Brakke flow unit-regular if it is smooth in a two-sided parabolic neighborhood of every space-time point with Gaussian density one, and proved that this class of flows is preserved under weak limits. Moreover, White \cite{whi09currents} introduced a notion of cyclic flows that captures the fact that the hypersurface bounds an inside and an outside, and proved that this class is preserved under weak limits as well. Here, we generalize these notions to the setting with boundary as follows:
\begin{definition}[unit-regular and cyclic]\label{def:regcyc}
A free boundary integral Brakke flow $\MM=\{\mu_t\}_{t\in I}$ in a domain $\Omega$ is called 
\begin{enumerate}[(1)]
\item unit-regular, if every space-time point of reflected Gaussian density one is a regular point, i.e. for all $X=(x, t)$ with $\Theta_\mathrm{refl}(\MM,X)=1$, there exists an $\varepsilon=\varepsilon(X)>0$ such that $t^{\prime} \mapsto \operatorname{spt}\left(\mu_{t^{\prime}}\right) \cap$ $B(x, \varepsilon)$ is a smooth mean curvature flow (possibly with free boundary) for $t^{\prime} \in\left[t-\varepsilon^2, t+\varepsilon^2\right]$.
\item cyclic, if for almost all $t$ the associated $\mathbb{Z}_2$ flat chain $\left[V_t\right]$ is relatively cyclic, namely $\operatorname{spt}(\partial[V_t])\subset S$ for a.e. $t$. Here, $[V_t]$ is obtained from $V_t$ by reducing the coefficients modulo 2, and $\partial$ denotes the boundary operator of $\mathbb{Z}_2$ flat chains.
\end{enumerate}
\end{definition}
Note that if $\MM$ is supported in the interior of $\Omega$, then $\Theta_\mathrm{refl}(\MM, X)=\Theta(\MM, X)$ (c.f. [Hui90]) and $\partial [V_t]=\emptyset$, so our definitions reduce to the ones from \cite{sw20} and \cite{whi09currents}, respectively.
We first prove that our generalized properties are also preserved under weak limits:

\begin{theorem}[preservation under weak limits]\label{thm:limitregcyc}
Unit-regular free boundary integral Brakke flows are closed under weak convergence of free boundary Brakke flows. Likewise are cyclic free boundary integral Brakke flows. 
\end{theorem}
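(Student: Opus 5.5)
The plan is to reduce both statements to the corresponding results without boundary (Schulze--White \cite{sw20} for unit-regularity, White \cite{whi09currents} for cyclicity), using a local analysis near $S$. Let $\MM^i=\{\mu^i_t\}$ be free boundary integral Brakke flows converging weakly to $\MM=\{\mu_t\}$. The compactness theory of Edelen \cite{edelen18} gives that the limit is again a free boundary integral Brakke flow, and also that $\Theta_\mathrm{refl}$ is upper semicontinuous under such convergence. I will split each argument into two cases: points in the interior of $\Omega$, and points on $S$. The interior case is handled directly by the boundaryless theorems, applied to the restriction of the flows to a ball $B(x,r)\subset\Omega$. There $\Theta_\mathrm{refl}=\Theta$ up to errors that vanish as the scale shrinks, and the local argument of \cite{sw20} and \cite{whi09currents} applies verbatim.

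\textbf{Unit-regularity at boundary points.} Let $X=(x,t)$ with $x\in S$ and $\Theta_\mathrm{refl}(\MM,X)=1$. I will pick $X_i=(x_i,t_i)\to X$ with $\Theta_\mathrm{refl}(\MM^i,X_i)\ge 1$ (possible since the $\mu^i$ have mass near $X$ and are integral); otherwise I work with the Gaussian density ratios at a fixed small scale.
\begin{enumerate}[(1)]
\item Use monotonicity of the reflected Gaussian density (Buckland/Edelen) and upper semicontinuity to show that for small $r$ the reflected density ratios of $\MM^i$ are $\le 1+\varepsilon_0$ on a parabolic neighbourhood $P(X,r)$, for all large $i$.
\item Apply Edelen's $\varepsilon$-regularity theorem for $\Theta_\mathrm{refl}$. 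This gives uniform curvature bounds for $\MM^i$ on $P(X,r/2)$, one-sided in time. To get a two-sided neighbourhood, I use unit-regularity of each $\MM^i$ at its density-one points. This rules out sudden vanishing, following the argument of \cite{sw20}: the $\varepsilon$-regularity estimates give smoothness forward in time, and unit-regularity combined with the clearing-out / density-one argument gives it up to time $t+\varepsilon^2$.
\item By uniform estimates (Stahl's boundary estimates for smooth free boundary MCF), the $\MM^i$ converge smoothly to a smooth free boundary flow near $X$. So $\MM$ is regular at $X$.
\end{enumerate}
The main obstacle is step (2), the two-sidedness near the boundary. I would first try to reduce it to the interior case by reflection. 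In Fermi coordinates near $S$, reflect across $S$ and use the reflected flow. This is a Brakke flow with a bounded error term coming from the curvature of $S$, a setting where \cite{sw20}'s argument is robust. If that fails, I would instead redo White's local regularity argument with $\Theta_\mathrm{refl}$ in place of $\Theta$.

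\textbf{Cyclicity.} By Brakke/Edelen compactness, for a.e.\ $t$ I pass to a subsequence, depending on $t$, with $V^i_t\to V_t$ as varifolds, under the condition $\sup_i\|\delta V^i_t\|$ locally bounded in the tangential sense. Fatou's lemma applied to $\int|S^TH|^2$ makes this a.e.\ in $t$. I then follow White's argument: the flat chains $[V^i_t]$ have locally bounded mass, and their boundaries satisfy $\operatorname{spt}\partial[V^i_t]\subset S$. By the compactness theorem for $\mathbb{Z}_2$ flat chains, a further subsequence converges in flat norm to some $T$, and $\partial T$ is supported in $S$ since the flat limit of chains with boundary in a closed set has boundary there. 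White's key point, adapted here, is that $T=[V_t]$. This uses the bounded first variation, via his theorem that the mod 2 reduction commutes with varifold limits when the first variation is bounded. Near $S$, I apply it to the reflected varifolds, which have locally bounded first variation across $S$ in the free boundary setting \cite{gruterjost,demasi21}. In the interior it applies directly. Hence $\operatorname{spt}\partial[V_t]\subset S$ for a.e.\ $t$, so $\MM$ is cyclic.
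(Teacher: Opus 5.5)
Your treatment of integrality and unit-regularity is essentially the paper's. Both transplant the Schulze--White argument \cite{sw20}, with $\Theta_\mathrm{refl}$ in place of $\Theta$ and Edelen's local regularity theorem in place of White's.

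The step you flag as the main obstacle needs no Fermi-coordinate reflection; it is closed by Edelen's density gap \cite[Cor 8.4]{edelen18}. For any $X_i\in\spt{\MM^i}$ with $X_i\to X$, upper semicontinuity gives $\Theta_\mathrm{refl}(\MM^i,X_i)\to 1$. The gap then forces $\Theta_\mathrm{refl}(\MM^i,X_i)=1$ for large $i$. Unit-regularity of $\MM^i$ then makes $\MM^i$ smooth on a two-sided neighbourhood of $X_i$. Arguing by contradiction over sequences, you get a fixed space-time neighbourhood $\UU$ of $X$ in which every $\MM^i$, $i$ large, is a smooth flow, so no sudden vanishing can occur. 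Only then is $\varepsilon$-regularity applied, giving uniform estimates on all of $\UU$, including times after $t$. Your ``density-one argument'' should be exactly this step; clearing-out plays no role.

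For cyclicity your route is heavier than needed. The condition $\spt{\partial[V_t]}\subset S$ says precisely that $\partial[V_t]$ vanishes in the open set $\Omega$. Weak convergence of free boundary flows restricts to weak convergence of ordinary Brakke flows in $\Omega$, where the $[V^i_t]$ are cycles. So White's closure theorem \cite[Thm 4.2]{whi09currents}, applied in $\Omega$, finishes the proof; that is all the paper does. Your own interior case, run over countably many balls exhausting $\Omega$, already yields it.

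The global argument you add on top is unnecessary, and as written it has an unjustified step. Compactness of $\mathbb{Z}_2$ flat chains in $\RR^{n+1}$ needs local bounds on $\mathbf{M}(\partial[V^i_t])$ as well as on $\mathbf{M}([V^i_t])$. Knowing $\spt{\partial[V^i_t]}\subset S$ supplies no such bound, and you do not provide one. The reflection near $S$ could be made to work, since the doubled chains are cycles near $S$. But restricted to $\Omega$ it only reproduces the interior statement, so you can simply drop it.
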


In particular, taking also into account existence via elliptic regularization (c.f. Theorem \ref{thm:hwcpamB6} below), our theorem applies for blow-up sequences. Specifically, given a unit-regular, cyclic, free boundary integral Brakke flow $\MM=\{\mu_t\}$ in a domain $\Omega_0$, a sequence of space-time points $X_i=(x_i,t_i)\in \bar{\Omega}_0\times \RR_+$, and rescaling factors $\lambda_i\to \infty$, one considers the sequence of flows $\MM^{i}=\mathcal{D}_{\lambda_i}(\MM-X_i)$ that is obtained from $\MM$ by shifting $X_i$ to the origin and parabolically rescaling by $\lambda_i$. Our theorem guarantees that any blowup limit $\MM^{\infty}=\lim\limits_{i\to\infty}\MM^i$ is unit-regular and cyclic. In particular, we obtain:

\begin{corollary}[ruling out certain blow up limits]\label{cor:blowupnotbad}
For any free boundary integral Brakke flows that are unit-regular and cyclic, quasistatic multiplicity one (half) planes or  $Y\times \RR^{n-1}$ or $Y\times \RR^{n-1}_+$ cannot occur as potential blow up limits. Here, $Y$ refers to the union of three rays meeting at $120^\circ$ angles.
\end{corollary}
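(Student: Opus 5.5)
Since by Theorem \ref{thm:limitregcyc} any blow-up limit $\MM^\infty$ of a unit-regular, cyclic free boundary integral Brakke flow is again unit-regular and cyclic, it suffices to show that none of the listed static flows has both properties. Blow-ups are taken either at interior points, where the limit lives in $\RR^{n+1}$, or at points $x_i$ approaching $S$ at rate comparable to $\lambda_i^{-1}$, where the barrier converges to a hyperplane $\partial\RR^{n+1}_+$ and the limit is a free boundary flow in a half-space. Accordingly I would treat the cases: multiplicity-one plane or half-plane (meeting $\partial \RR^{n+1}_+$ orthogonally), $Y\times\RR^{n-1}$ (in whole space, or in a half-space with the spine parallel to the barrier), and $Y\times \RR^{n-1}_+$ with the spine orthogonal to the barrier. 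Here ``quasistatic'' means $\mu_t$ is the fixed multiplicity-one measure for $t\le T$ and vanishes for $t>T$, for some $T$. For $Y$-type flows I would also allow this quasistatic form, or even the fully static one.

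For the plane or half-plane case I would use unit-regularity. At a point $X=(x,T)$ with $x$ on the plane (on the barrier, in the half-plane case), the reflected Gaussian density at $X$ only sees times before $T$, where the flow is a static multiplicity-one (half) plane. Reflecting across $\partial\RR^{n+1}_+$ gives a full multiplicity-one plane, so $\Theta_\mathrm{refl}(\MM^\infty,X)=1$. Unit-regularity then forces the support to be a smooth flow on $[T-\varepsilon^2,T+\varepsilon^2]$ near $x$. This contradicts the sudden vanishing at $T$: a smooth flow through a nonempty slice cannot become empty instantaneously, by short-time existence and uniqueness of smooth free boundary MCF together with the localization of Brakke's inequality.

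For the $Y$-type cases I would use cyclicity. Take a generic time $t$ at which $V_t$ is the multiplicity-one cone $Y\times\RR^{n-1}$ (resp. $Y\times\RR^{n-1}_+$). Reducing mod 2, the $\mathbb{Z}_2$ boundary of $[V_t]$ is the spine $\{0\}\times\RR^{n-1}$ (resp. $\{0\}\times\RR^{n-1}_+$) with coefficient $3\equiv 1$, since three sheets meet there. So $\partial[V_t]\neq 0$. In the whole-space case this contradicts cyclicity outright. In the half-space case the spine of $Y\times\RR^{n-1}_+$ is orthogonal to the barrier, so it is not contained in the limiting barrier $\partial\RR^{n+1}_+$. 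Hence $\operatorname{spt}\partial[V_t]\not\subset S$, again a contradiction. A $Y\times\RR^{n-1}$ blow-up at the barrier with spine parallel to it also has spine off the barrier, except in the degenerate configuration where the spine lies in the barrier. That configuration is incompatible with the free boundary (orthogonality) condition for three sheets at $120^\circ$, so it does not arise.

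The main obstacle is making the barrier-adapted notions behave correctly in the limit. First, one must check that $\Theta_\mathrm{refl}$ of the limiting half-plane equals one at barrier points, which requires the reflected density computation for the flat barrier. Second, one must make sure that ``relatively cyclic'' refers to the limiting barrier, so that the spine's $\mathbb{Z}_2$ boundary is correctly detected. I would handle both by invoking Theorem \ref{thm:limitregcyc} in the rescaled domains $\Omega_i=\lambda_i(\Omega_0-x_i)$, whose boundaries converge smoothly to a hyperplane or escape to infinity.
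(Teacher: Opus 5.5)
Your proposal is correct and follows the paper's argument, which the paper leaves implicit: by Theorem \ref{thm:limitregcyc} every blow-up limit is again unit-regular and cyclic; a quasistatic multiplicity-one (half) plane then violates unit-regularity at a density-one point of its vanishing time; and the $Y$-type configurations violate relative cyclicity, because the spine carries $\mathbb{Z}_2$ multiplicity $3\equiv 1$ and is not contained in the barrier. Two small simplifications: you can take the density-one point in the interior of the half-plane, which avoids any reflected-density computation at the barrier, and sudden vanishing contradicts smoothness in a two-sided parabolic neighborhood directly, so you do not need short-time existence or uniqueness.
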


Note that for blowups near the boundary, specifically if 
$\lambda_i d(x_i,\partial \Omega_0)\le C$, the limit $\MM^{\infty}$ is a free boundary flow in the halfspace $\Omega_{\infty}=\RR^{n+1}_{+}$. Edelen \cite{edelen18} observed that such $\MM^{\infty}$ can be reflected to obtain an integral Brakke flow without boundary $\tilde{\MM}^{\infty}$ in $\RR^{n+1}$. Here, we show that our notions of unit-regular and cyclic behave well under reflections:
\begin{theorem}[preservation under reflections]\label{thm:reflectprop}
Given an integral, unit-regular, cyclic free boundary Brakke flow in the halfspace $\mathbb{R}^{n+1}_+$, the reflected flow is also an integral, unit-regular, cyclic flow (without boundary) in $\mathbb{R}^{n+1}$.
\end{theorem}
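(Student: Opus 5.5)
We handle the three properties (integral Brakke flow, unit-regular, cyclic) separately, working with the reflection $R(x_1,\dots,x_{n+1})=(x_1,\dots,x_n,-x_{n+1})$ across $S=\partial\RR^{n+1}_+$. The reflected flow is $\tilde\MM=\{\tilde\mu_t\}$ with $\tilde\mu_t=\mu_t+R_\#\mu_t$, counting measure on $S$ with the appropriate multiplicity; in the halfspace case this is Edelen's reflection \cite{edelen18}. Edelen already showed that $\tilde\MM$ is an integral Brakke flow in $\RR^{n+1}$, so we take that as given. The key points are the following.

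First, for a general nonnegative $\tilde\phi\in C^2_c(\RR^{n+1}\times I)$ we symmetrize, setting $\phi=\tilde\phi+\tilde\phi\circ R$. This $\phi$ satisfies $D_{\nu_S}\phi=0$ on $S$, so it is admissible in \eqref{equ:brakke}. For $X$ tangential to $S$, the first variation of $V_t+R_\#V_t$ has mean curvature vector $S^TH$ on $\RR^{n+1}_+$ and its reflection. The normal component of $H$ on $S$ is exactly what the free boundary condition cancels.

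Second, we have the reflected density identity $\Theta(\tilde\MM,X)=\Theta_{\mathrm{refl}}(\MM,X)$ for $X\in\bar\RR^{n+1}_+\times I$. For points off $S$, this uses the half-Gaussian convention by which $\Theta_{\mathrm{refl}}$ is defined; for points on $S$, it is immediate from the definition of $\Theta_{\mathrm{refl}}$ as the density of the Gaussian reflected through $S$. We take this identity as part of Edelen's setup.

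\textbf{Unit-regularity.} Let $X=(x,t)$ with $\Theta(\tilde\MM,X)=1$. By symmetry we may assume $x\in\bar\RR^{n+1}_+$. Then $\Theta_{\mathrm{refl}}(\MM,X)=1$, so unit-regularity of $\MM$ gives $\varepsilon>0$ such that $\spt\mu_{t'}\cap B(x,\varepsilon)$ is a smooth MCF, possibly with free boundary.

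If $x\notin S$, we shrink $\varepsilon<d(x,S)$ and we are done.

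If $x\in S$, the flow near $x$ is a smooth flow $M_{t'}$ meeting $S$ orthogonally, and $M_{t'}\cup R(M_{t'})$ is a priori only $C^{1,1}$ across $S$. The free boundary condition makes it $C^1$. To get smoothness, we argue that the doubled flow is a $C^1$ solution of MCF with multiplicity one (density one, since density $1$ forces multiplicity one near $X$). Brakke/White local regularity \cite{whi05localreg} then gives smoothness. Alternatively, one can check directly that the orthogonality condition together with the MCF equation forces all odd normal derivatives of the graph function in the $x_{n+1}$-direction to vanish on $S$. (Differentiating the equation repeatedly in time and the Neumann condition shows that the odd reflection parity is preserved.) The cleanest route is the $\varepsilon$-regularity/Gaussian density route: since $\Theta(\tilde\MM,X)=1$ and $\tilde\MM$ is an integral Brakke flow, density is $<1+\varepsilon_0$ nearby, so White's theorem \cite{whi05localreg} applies. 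This also covers any issue of sudden vanishing on the past/future side, since $\MM$'s regular neighborhood is two-sided.

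\textbf{Cyclicity.} For a.e. $t$, $[\tilde V_t]=[V_t]+R_\#[V_t]$ as $\mathbb{Z}_2$ flat chains, possibly after checking that the part of $V_t$ on $S$ has been doubled, which is zero mod 2. Then $\partial[\tilde V_t]=\partial[V_t]+R_\#\partial[V_t]$. By hypothesis $T:=\partial[V_t]$ is supported in $S$, where $R$ is the identity, so $R_\#T=T$ and the sum is $2T=0$ mod 2. Hence $\tilde\MM$ is cyclic.

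\textbf{Main obstacle.} The hardest part is expected to be the bookkeeping at $S$. First, the density identification of $\Theta(\tilde\MM,\cdot)$ with $\Theta_{\mathrm{refl}}$ at boundary points must be checked carefully. Second, one has to confirm that the mass of $V_t$ lying inside $S$ is handled consistently, both in the definition of $\tilde\mu_t$ and in the mod-2 cancellation. For the regularity upgrade at $S$, we first try White's local regularity applied to $\tilde\MM$.
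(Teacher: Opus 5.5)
Your architecture matches the paper's. Integrality and the Brakke inequality for the doubled flow come from Edelen. Unit-regularity is reduced, via $\Theta(\tilde\MM,X)=\Theta_{\mathrm{refl}}(\MM,X)$, to showing that the reflection of a smooth free boundary flow near a point of $S$ is smooth. Cyclicity is exactly the paper's computation $\partial[\tilde V_t]=\partial[V_t]+R_\#\partial[V_t]=2\,\partial[V_t]=0$, using $\spt{\partial[V_t]}\subset S$ and $R|_S=\mathrm{id}$; the paper additionally justifies $\partial R_\#=R_\#\partial$ by polyhedral approximation. (Off $S$ the density identity is trivial because the mirror image stays at positive distance; no half-Gaussian convention is involved.)

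\textbf{The gap: circular use of White's theorem.} Your preferred regularity upgrade at $S$ fails as written. The theorem of \cite{whi05localreg} is an a priori estimate for smooth flows. Applying it to a Brakke flow presupposes that the flow is already smooth, or unit-regular, near the point. At points of $S$ that is exactly what you are proving for $\tilde\MM$, so the argument is circular. Gaussian density ratios below $1+\varepsilon_0$ give nothing by themselves for integral Brakke flows: a static multiplicity-one plane that vanishes at time $t_0$ has density $1$ at $(x,t_0)$. Your observation that $\MM$ is regular on both sides of $t$ is not a hypothesis White's theorem accepts.

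An $\varepsilon$-regularity route would instead need Brakke's regularity theorem in a form assuming unit density and closeness to a plane, with no mass drop, throughout a cylinder (e.g.\ the Kasai--Tonegawa version). You would verify those hypotheses from the explicit $C^1$ structure of $M_{t'}\cup R(M_{t'})$ and then run a parabolic bootstrap. That is heavy machinery for an elementary point.

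\textbf{What the paper does.} It proves Lemma \ref{lem:reflectsmooth}. Write the flow near $x$ as a graph $u$ satisfying $u_t=a_{ij}(Du)\,u_{x_ix_j}$ with $u_{x_1}=0$ on $S$, where $x_1$ is the coordinate normal to $S$, and reflect evenly to get $\tilde u$. Then:
\begin{enumerate}[(1)]
\item $\tilde u\in C^1$;
\item $\tilde u_{x_1x_j}=0$ on $S$ for $j\ge 2$;
\item $\tilde u_{x_1x_1}$ is continuous across $S$, because $\tilde u_{x_1}$ is odd in $x_1$.
\end{enumerate}
So $\tilde u$ is a classical $C^2$ solution of graphical MCF across $S$, and Schauder estimates give smoothness. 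Note that the union is therefore $C^2$, not merely $C^{1,1}$; without orthogonality it would only be Lipschitz.

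\textbf{Your second alternative.} Showing that all odd normal derivatives of $u$ vanish on $S$ also works, and it avoids Schauder. However, the mechanism is differentiating the equation in the normal direction at $S$, not in time. Take one $x_1$-derivative at $S$ and use $u_{x_1}=0$, $u_{x_1x_j}=0$ for $j\ge2$, $a_{11}=1$ and $a_{1j}=0$ there; this gives $u_{x_1x_1x_1}=0$. Induction using the reflection symmetry of the coefficients $a_{ij}(Du)$ handles the higher orders.
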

In particular, the theorem shows that our notions from Definition \ref{def:regcyc} are compatible with the ones from \cite{sw20} and \cite{whi09currents} under Edelen's reflection. Moreover, as will be discussed in more detail in forthcoming work, Theorem \ref{thm:reflectprop} provides a tool to analyze boundary singularities via classification results for ancient unit-regular, cyclic, integral Brakke flows in entire space, e.g. the ones from \cite{chh,chhw,bl25b,ccms_inven,ccs}.

Out next result is an avoidance principle for free boundary Brakke flows. To discuss this, first recall that by the maximum principle, smooth MCFs do not bump into each other, namely if they are disjoint at some time $t_0$, then they remain disjoint at all later times.\footnote{Here we assume that at least one of the solutions is compact, though this assumption could be relaxed by \cite{whi24ap}.} This was generalized by Ilmanen \cite{ilm94}, who proved that the support of a Brakke flow satisfies the avoidance principle when compared with any smooth compact solution. Here, we establish a free boundary version of Ilmanen's avoidance principle, at least for convex domains:
\begin{theorem}[avoidance principle]\label{thm:0elliptic10.5}
Let $\mathcal{M}=\{\mu_t\}$ be a free boundary integral Brakke flow in a convex domain $\Omega$. Then for any smooth compact free boundary flow $\{N_t\}_{t\in [t_0,t_1]}$ with $(\spt {\mathcal{M}})_{t_0}\cap N_{t_0}=\emptyset$, we have $(\spt {\mathcal{M}})_{t}\cap N_{t}=\emptyset$ for all $t\ge t_0$. 
\end{theorem}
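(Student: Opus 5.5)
We follow Ilmanen's proof of the avoidance principle, which compares a Brakke flow with the distance function to a smooth flow. The new ingredient is that the test function must satisfy the Neumann condition $D_{\nu_S}\phi=0$ on $S$. We also need the Brakke inequality to give monotone decrease of the mass near $N_t$, up to boundary terms that carry a good sign; convexity of $\Omega$ is what produces that sign.

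\textbf{Setup.} Argue by contradiction and set
\[
t^*=\inf\{t\ge t_0 : (\spt{\MM})_t\cap N_t\neq\emptyset\}.
\]
Since $\spt{\MM}$ is closed in space-time and $N_{t_0}$ is compact, the distance
\[
d_0=\dist{(\spt{\MM})_{t_0},N_{t_0}}
\]
is positive. It then suffices to prove a quantitative statement: the distance $d(t)=\dist{(\spt{\MM})_t,N_t}$ satisfies a differential inequality $d'\ge -Cd$ in the barrier sense, so $d$ can never reach zero.

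\textbf{Localized test function.} Let $u(x,t)$ be the distance from $x$ to $N_t$, measured inside $\bar\Omega$, and consider test functions
\[
\phi(x,t)=f\bigl(u(x,t)^2-r(t)^2\bigr)
\]
with $f$ a cutoff, as in Ilmanen's shrinking-ball argument. We want to show that $\int\phi\,d\mu_t$ stays zero whenever it starts at zero.

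Near $S$ the function $\phi$ needs $D_{\nu_S}\phi=0$. Because $N_t$ meets $S$ orthogonally, the distance to $N_t$ is nearly even under reflection across $S$ to first order, though not exactly. I would therefore replace $u$ by a distance built in Fermi coordinates adapted to $S$, or else reflect $N_t$ locally across $S$ and use the distance to the union. This defines $u$ so that $D_{\nu_S}u=0$ on $S$ exactly.

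The cost of this modification is an error in the computation
\[
\partial_t\phi-\operatorname{tr}\bigl(S^T D^2\phi\bigr),
\]
which is where the Brakke inequality is used; recall that $\delta V_t$ involves only tangential variations. Convexity of $\Omega$ means the second fundamental form of $S$ is nonnegative. This ensures that the Fermi-coordinate distance is a supersolution (equivalently, that the reflected barrier stays on the correct side), so the error terms have a sign of the form $\le C\phi$. Hence
\[
\frac{d}{dt}\int\phi\,d\mu_t\le C\int\phi\,d\mu_t .
\]

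\textbf{Conclusion.} Applying the Brakke inequality \eqref{equ:brakke} with this admissible $\phi$ gives
\[
\int\phi\,d\mu_t\le e^{C(t-t_0)}\int\phi\,d\mu_{t_0}.
\]
Choose $f$ supported in $\{u<d_0e^{-C'(t-t_0)}\}$ for a suitable $C'$, so that $\int\phi\,d\mu_{t_0}=0$. Then $\int\phi\,d\mu_t=0$ for all $t$, and $\spt{\mu_t}$ misses a neighbourhood of $N_t$ of size $d_0e^{-C'(t-t_0)}$.

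One further point needs care: the statement concerns $(\spt{\MM})_t$, the space-time support, rather than $\spt{\mu_t}$. Brakke flows can lose mass suddenly, but they cannot gain it; since the estimate holds uniformly on open sets, it passes to the space-time closure. Finally, compactness of $N_t$ makes all constants uniform, and iterating on short time intervals handles times where the Neumann-adjusted distance is only locally smooth.

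\textbf{Main obstacle.} The hard step is producing a distance-type function with exact Neumann condition whose parabolic error has the right sign. I would first try the reflected comparison in the halfspace model, where it is exact, and then control the curvature error of $S$ using convexity. If that fails, I would instead use Edelen's reflected test functions from \cite{edelen18} and absorb the error into a factor $e^{Ct}$.
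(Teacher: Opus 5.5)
\textbf{Gap: the Neumann-adjusted distance is never constructed, and the obvious candidates fail.} The step you flag as the main obstacle is where the argument breaks, and you do not supply it.

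Your constructions do not give exact Neumann data when $S$ is curved. Reflection across $S$ is not an isometry, so the distance to $N_t\cup\tilde N_t$ is not even across $S$. For example, take $\Omega$ the unit disk and $N$ a static diameter. Near $(1,0)$ the reflected union is the $x$-axis, and its distance function $|y|$ has normal derivative $\sin\theta\neq 0$ at $(\cos\theta,\sin\theta)\in S$. Likewise, an even extension of $d$ across $S$ has a kink, because $D_{\nu_S}d>0$ there.

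The claim that convexity makes a Fermi-coordinate distance a supersolution with error $\le C\phi$ is asserted, not derived, and it is doubtful as stated. Once $u$ is not a genuine distance you lose two things Ilmanen's computation depends on:
\begin{itemize}
\item $|Du|=1$ and $D^2u\,Du=0$, which give $|D^2u(\nu,\nu)|\le C(1-(\nu\cdot Du)^2)$ and let the good term $-6s(1-(\nu\cdot Dd)^2)$ absorb the Hessian term;
\item the Evans--Spruck inequality $\partial_t d-\Delta d\ge 0$.
\end{itemize}
Any bounded defect in these enters multiplied by $f'$. Near the edge of the support $f'$ is of lower order than $\phi$ itself (e.g.\ $s^2$ versus $s^3$), so the error cannot be written as $\le C\phi$, and the Gronwall step does not close. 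The quantitative claim $d'\ge -Cd$ is also never established.

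\textbf{Missing idea: the Neumann condition on test functions can be dropped.} The Brakke inequality involves $\phi$ only through $\phi$, $\partial_t\phi$ and $S^TH\cdot D\phi$. So you can flatten an arbitrary nonnegative $\phi\in C^1_c$ in the normal direction inside an $\varepsilon$-collar $U_\varepsilon$ of $S$, with $|\phi-\phi_\varepsilon|\le C\varepsilon$ and $|D\phi-D\phi_\varepsilon|\le C$ supported in the collar. The resulting error is at most $C\int\!\int \chi_{U_\varepsilon}|S^TH|\,d\mu_t\,dt$, which tends to $0$ because $S^TH=0$ $\mu_t$-a.e.\ on $S$. This is how the paper extends the inequality to all $C^1_c$ test functions.

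You can then keep the honest Euclidean distance $d=\operatorname{dist}(\cdot,N_t)$ and Ilmanen's $\phi=h(d)$, and integrate by parts with the full first variation $\delta V(X)=-\int H\cdot X\,d\mu+\int X\cdot\nu_S\,d\sigma_V$. The only new term is $\int D\phi\cdot\nu_S\,d\sigma_V$, and convexity controls it at first order rather than second order. For $x\in S$, the nearest-point segment from $x$ to $x_0\in N_t$ lies in $\bar\Omega$. Convexity then gives $(x-x_0)\cdot\nu_S(x)\ge 0$, i.e.\ $Dd\cdot\nu_S\ge 0$, hence $D\phi\cdot\nu_S=-3s^2\,Dd\cdot\nu_S\le 0$.

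The bulk term is then Ilmanen's computation unchanged. Take $h$ constant near $N_t$. A shrinking-ball barrier, which uses the same boundary sign, supplies the short-time separation $\operatorname{dist}(\operatorname{spt}\mu_t,N_t)\ge\gamma/2$ needed to run the computation. With this, $\int\phi\,d\mu_t$ stays zero, with no exponential shrinking or Gronwall argument required.
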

Here, $(\spt{\mathcal{M}})_t$ denotes the support at time $t$, namely the set of all $x\in \bar{\Omega}$ with $\Theta_\mathrm{refl}(\MM, (x,t))\ge 1$. In the special case when the flows have empty boundary, our avoidance principle of course recovers the one by \cite{ilm94}. Another way to phrase the conclusion of Theorem \ref{thm:0elliptic10.5} is that the support of $\MM$ is a subsolution of the free boundary level set flow. It has been shown by Sato \cite{sato94} that subsolutions also avoid each other, and thus as an important corollary, we obtain: 
\begin{corollary}[avoidance principle]
Let $\MM, \NN$ be compact free boundary integral Brakke flows in a convex domain. If $(\spt{\MM})_{t_0}\cap (\spt{\NN})_{t_0}=\emptyset$, then we have $(\spt {\mathcal{M}})_{t}\cap (\spt{\NN})_{t}=\emptyset$ for all $t\ge t_0$. 
\end{corollary}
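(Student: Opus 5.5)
The plan is to combine Theorem \ref{thm:0elliptic10.5} with Sato's comparison principle for free boundary subsolutions. Write $K_t=(\spt{\MM})_t$ and $L_t=(\spt{\NN})_t$.

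First, I will check that $\{K_t\}_{t\ge t_0}$ and $\{L_t\}_{t\ge t_0}$ are closed families of sets in $\bar\Omega$ satisfying the definition of a free boundary subsolution. Closedness comes from upper semicontinuity of the reflected Gaussian density, which follows from Edelen's monotonicity formula \cite{edelen18}. Upper semicontinuity also makes the space-time support $\{(x,t):\Theta_\mathrm{refl}\ge 1\}$ closed. Because $\Omega$ is convex, Theorem \ref{thm:0elliptic10.5} then says exactly that each family avoids every smooth compact free boundary MCF. Compactness of the flows keeps $K_t$ and $L_t$ in a fixed bounded set, at least on finite time intervals.

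Second, I will pass from avoidance of smooth flows to mutual avoidance of the two subsolutions. Here I invoke Sato \cite{sato94}: two free boundary subsolutions (equivalently, sets contained in level set flows) that are initially disjoint and compact stay disjoint. If I had to reprove this, I would use the Evans–Spruck/Ilmanen route. Since $K_{t_0}$ and $L_{t_0}$ are disjoint compact sets, there is $\delta>0$ with $d(K_{t_0},L_{t_0})\ge\delta$. I would choose a continuous function $u_0$ on $\bar\Omega$ with $u_0\le 0$ near $K_{t_0}$ and $u_0\ge1$ near $L_{t_0}$. Let $u$ be the viscosity solution of the Neumann level set equation of Giga–Sato. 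Next I would show that $K_t\subset\{u(\cdot,t)\le 0\}$ and $L_t\subset\{u(\cdot,t)\ge 1\}$. The method is to compare each subsolution with the level sets of $u$, using smooth free boundary barriers and the definition via avoidance, as in Ilmanen's ``biggest flow'' argument \eqref{equ:levelsetflow}: a subsolution starting inside a closed set stays inside that set's level set flow $F_t$. The level set flows $F_t(K_{t_0})\subset\{u\le 0\}$ and $F_t(L_{t_0})\subset\{u\ge1\}$ are disjoint, which gives the conclusion.

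The main obstacle is this containment step: showing that a subsolution, defined only through avoidance of smooth compact free boundary flows, is contained in the level set flow defined via Neumann viscosity solutions. It needs an equivalence between the geometric definition \eqref{equ:levelsetflow} and the viscosity definition with boundary. Convexity of $\Omega$ helps here, since it makes the boundary Neumann condition compatible with comparison. This equivalence is exactly the content I take from Sato's work. Given the paper's phrasing (``It has been shown by Sato \cite{sato94} that subsolutions also avoid each other''), I would cite it directly rather than reprove it. The remaining checks are that our support sets meet Sato's hypotheses, namely closedness, boundedness, and disjointness at $t_0$; these are routine.
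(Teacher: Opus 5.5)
Your proposal is correct and follows the paper's argument: Theorem \ref{thm:0elliptic10.5} says precisely that the support of each flow is a free boundary subsolution, and the paper then cites Sato \cite{sato94} for the fact that initially disjoint subsolutions stay disjoint. One small correction on where the difficulty lies: with the paper's geometric definition of $F_t$ as the maximal subsolution, the containment $K_t\subseteq F_{t-t_0}(K_{t_0})$ is immediate, so the real input from Giga--Sato is that the level set flows of disjoint compact sets stay disjoint (Proposition \ref{prop:fbprop}(2)), not the containment step you call the main obstacle.
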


\vspace{\baselineskip}

Finally, using Theorem \ref{thm:limitregcyc} and Theorem \ref{thm:0elliptic10.5}, we will match a unit-regular, cyclic free boundary integral Brakke flow to the free boundary outer/inner level set flow. To discuss this, recall that in the setting without boundary, 
Hershkovits and White \cite{hw20} discussed the outer and inner flow for an initial embedded hypersurface, and proved  that one can construct a Brakke flow whose space-time support exactly matches the outer/inner flow. 
Generalizing the notions from \cite{hw20}, we will now define outer and inner flows for the free boundary case.  Let $K\subseteq \bar{\Omega}$ be a closed subset and assume that $M:=\partial K:=K\backslash \inte{K}$ is a smooth free boundary surface, where $\inte{K}$ is the interior of $K$ viewed as a subset of topological space $\bar{\Omega}$. 
Denote $K^{\prime}:=\overline{\bar{\Omega}\backslash K}$, and evolve $K$ and $K^{\prime}$ by free boundary level set flow, c.f. (\ref{equ:levelsetflow}).
We consider the outer and inner space time tracks
$$
\begin{aligned}
\mathcal{K} & :=\left\{(x, t) \subseteq \bar{\Omega}\times [0,+\infty) \mid x \in F_t(K)\right\} \\
\mathcal{K}^{\prime} & :=\left\{(x, t) \subseteq \bar{\Omega}\times [0,+\infty) \mid x \in F_t(K^{\prime})\right\}.
\end{aligned}
$$
\begin{definition}[outer and inner free boundary flow]
Setting $\partial\mathcal{K}:=\mathcal{K}\backslash \operatorname{Int}_{\bar{\Omega}\times \RR}\left(\mathcal{K}\right)$ and $\partial\mathcal{K}^{\prime}:=\mathcal{K}^{\prime}\backslash \operatorname{Int}_{\bar{\Omega}\times \RR}\left(\mathcal{K}^{\prime}\right)$, 
we call
\begin{equation}\label{equ:hwcpam2}
\begin{split}
M_t & :=\left\{x \in \bar{\Omega} \mid(x, t) \in \partial \mathcal{K}\right\} \\
M^{\prime}_t & :=\left\{x \in \bar{\Omega} \mid(x, t) \in \partial \mathcal{K}^{\prime}\right\}
\end{split}
\end{equation}
the outer and inner free boundary flows of $(K,M)$. 
\end{definition}

These flows are nontrivially separated when fattening occurs.
Using these notions, our free boundary version of the matching theorem \cite[Thm B6]{hw20} is the following:
\begin{theorem}[matching free boundary Brakke flows]\label{thm:hwcpamB6}
Given any $K\subseteq \bar{\Omega}$ with $M:=\partial K$ being a smooth compact free boundary surface, where $\Omega$ is a convex domain, set $\mu=\mathcal{H}^n\llcorner\partial K$. There exist unit-regular, cyclic free boundary integral Brakke flows $\{\mu_t\}_{t\ge0}$ and $\{\mu^{\prime}_t\}_{t\ge0}$ such that $\mu_0=\mu^{\prime}_0=\mu$ and the space-time support of the flow is the space-time set swept out by the outer flow $M_t$ and inner flow $M^{\prime}_t$, respectively.
\end{theorem}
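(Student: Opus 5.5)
Following Hershkovits--White \cite[Thm B6]{hw20}, I will approximate $K$ from the outside (resp.\ inside) by domains whose level set flows do not fatten. I then construct unit-regular, cyclic free boundary Brakke flows for these approximations via Edelen's elliptic regularization \cite{edelen18}. Finally I pass to the limit using Theorem \ref{thm:limitregcyc} and identify the support using Theorem \ref{thm:0elliptic10.5}.

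\textbf{Step 1: approximation.} Since $M=\partial K$ is a smooth compact free boundary hypersurface, I take a one-parameter family of smooth free boundary hypersurfaces $M^s=\partial K^s$, $s\in(0,s_0)$, foliating a one-sided collar of $M$ outside $K$. I build it by moving $M$ along a vector field tangential to $S$ that extends $\nu_M$. The level set flows $F_t(K^s)$ are then nested. Because the $M^s$ are pairwise disjoint, Sato's avoidance of subsolutions keeps their boundary flows disjoint, so for all but countably many $s$ the outer and inner flows of $(K^s,M^s)$ coincide (non-fattening). I choose such $s_i\to0$.

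\textbf{Step 2: flows for the approximations.} For each nonfattening $K^{s_i}$ I run elliptic regularization with free boundary as in \cite{edelen18}. This produces a free boundary integral Brakke flow $\MM^i$ with $\mu^i_0=\mathcal{H}^n\llcorner M^{s_i}$, obtained as a limit of translating boundaries of minimizers in $\Omega\times\RR$. The $\mathbb{Z}_2$ boundary of the limiting chain lies in $S$, so $\MM^i$ is cyclic. Unit-regularity follows from White's/Edelen's local regularity, via the reflected $\varepsilon$-regularity theorem near $S$ and Brakke regularity in the interior, together with a Schulze--White type argument as in the proof of Theorem \ref{thm:limitregcyc}. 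Theorem \ref{thm:0elliptic10.5}, which uses the convexity of $\Omega$, shows that $\spt\MM^i$ is a subsolution, so $\spt\MM^i\subset\{(x,t):x\in F_t(M^{s_i})\}$. Conversely, I get that the support of $\MM^i$ fills the level set flow of $M^{s_i}$ in one of two ways. Either I use that a nonfattening flow's boundary is the limit of the minimizers' boundaries, as in \cite{hw20}. Or I argue that points of $\partial\{F_t(K^{s_i})\}$ not in $\spt\MM^i$ would let the level set flow shrink away from the support, contradicting maximality of \eqref{equ:levelsetflow}.

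\textbf{Step 3: limit and identification.} Uniform area bounds hold, from the initial area bound and the free boundary monotonicity. Edelen's compactness then gives a subsequential limit $\MM$, which is unit-regular and cyclic by Theorem \ref{thm:limitregcyc}, with $\mu_0=\mu$. Upper semicontinuity of $\Theta_\mathrm{refl}$ gives $\spt\MM\subset\lim\spt\MM^i$. By nestedness, the latter equals the boundary of $\bigcap_i\{F_t(K^{s_i})\}$, which is the outer track $\partial\KK$. For the reverse inclusion, let $X\in\partial\KK$. The flows $\MM^i$ have support approaching $X$, and by the clearing-out lemma / lower density bound $\Theta_\mathrm{refl}\ge1$ on supports, mass cannot disappear in the limit. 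Hence $X\in\spt\MM$. The inner flow is handled symmetrically using $K'$.

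I expect the main obstacle to be Step 2, in particular cyclicity and unit-regularity of the elliptic-regularization flows up to the barrier $S$. I would try first to prove both for the elliptic translators by reflecting locally across $S$ near boundary points. I would then invoke the interior results of \cite{sw20,whi09currents} together with Theorem \ref{thm:reflectprop} in blow-ups.
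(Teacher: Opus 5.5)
Your architecture matches the paper's: collar approximation by nonfattening $K^{s_i}$, free boundary elliptic regularization, passage to the limit via Theorem~\ref{thm:limitregcyc}, and an upper bound on the support via Theorem~\ref{thm:0elliptic10.5}. However, the step that carries the theorem is not proved, namely the lower bound $\partial F_t(K^{s_i})\subseteq\operatorname{spt}\MM^i$ in Step~2.

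\begin{itemize}
\item Your second option is invalid. Maximality of the level set flow says every subsolution is contained in $F_t(M^{s_i})$. Knowing that the smaller set $\operatorname{spt}\MM^i$ is a subsolution therefore contradicts nothing.
\item More generally, Brakke's inequality permits loss of mass: a flow truncated to zero after some time is still a Brakke flow. So no argument that uses only the subsolution property of the support can give a lower bound.
\item Your first option cites the right source but not the mechanism. Tellingly, you select nonfattening $s_i$ but never use nonfattening anywhere.
\end{itemize}

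The missing idea is an independent lower barrier: the regions $E^{i,\varepsilon}_t$ enclosed by the translated minimizers $P^{i,\varepsilon}_t$. The paper proceeds as follows.

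\begin{itemize}
\item Take an $L^1_{\mathrm{loc}}$ limit $\tilde E^i_t=E^i_t\times\RR$ alongside the measures, and show $\partial^*\tilde E^i_t\subseteq\operatorname{spt}\tilde\mu^i_t$.
\item Identify $E^i_t=F_t(K_i)$ up to $\HH^{n+1}$-null sets. The inclusion $E^i_t\subseteq F_t(K_i)$ comes from choosing a representative of $E^{i,\varepsilon}_t$ whose topological boundary lies in $\operatorname{spt}\mu^{i,\varepsilon}_t$, so that Theorem~\ref{thm:0elliptic10.5} makes it a subsolution.
\item The reverse inclusion comes from the same argument for $K_i'=\overline{\bar\Omega\setminus K_i}$, together with $E^i_t\cup (E')^i_t=\bar\Omega$. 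It also uses that $F_t(K_i)\cap F_t(K_i')$ is the level set flow of $\partial K_i$ (Proposition~\ref{prop:intermeasure0}), which is null precisely by nonfattening.
\end{itemize}

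This yields only time-slice information, $\partial\inte{F_t(K_i)}\subseteq\operatorname{spt}\mu^i_t$. It does not show that $\operatorname{spt}\MM^i$ fills $F_t(M^{s_i})$, which is what your Step~3 assumes, so nestedness alone is not enough there. To reach the whole outer track, the paper uses $\partial F_t(K)\subset\inte{F_t(K_i)}$ to find points of $\partial\inte{F_t(K_i)}$ near each $x\in\partial F_t(K)$. It then invokes Proposition~\ref{prop:bdyflowrelation}(iii), $M_t=\lim_{\tau\uparrow t}\partial F_\tau(K)$, to capture the parts of $M_t$ that are not time-slice boundaries.

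Two smaller points. In Step~3 the tools are swapped: weak convergence gives $\operatorname{spt}\MM\subseteq\lim\operatorname{spt}\MM^i$, and upper semicontinuity of $\Theta_{\mathrm{refl}}$ gives the reverse. Also, the obstacle you anticipate (cyclicity and unit-regularity of the approximants) is the routine part, since the $\varepsilon$-flows bound regions and Theorem~\ref{thm:limitregcyc} transfers both properties to the limit.
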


Theorem \ref{thm:hwcpamB6} has already been used implicitly in Corollary \ref{cor:blowupnotbad} above. Moreover, Theorem \ref{thm:hwcpamB6} is an important tool to study uniqueness of free boundary flows through singularities (for uniqueness without boundary see e.g. \cite{hw20,chh,chhw,cds}). In particular, Theorem \ref{thm:hwcpamB6} will be applied in forthcoming work by Haslhofer and the author, where we address the mean-convex neighborhood conjecture and uniqueness conjecture for free boundary flows through (half) cylindrical singularities.
\\

\textbf{Outline.} 
Regarding Theorem \ref{thm:limitregcyc},  to prove that being unit-regular is preserved under weak limits, we adapt the argument from [SW, Thm 4.2] to our setting, where we now use Edelen's quantity $\Theta_{\text{refl}}$. And to prove that being cyclic is preserved under weak limits we observe that our definition is designed, such that after restricting to the interior of $\Omega$ we can apply the results from [Whi09].

Regarding Theorem \ref{thm:reflectprop}, to prove that being unit-regular is preserved under reflections, we derive an elementary PDE lemma about reflections of graphical MCFs with free boundary. Most interestingly, to prove that being cyclic is preserved under reflections, given any free boundary integral varifold $V$ in the halfspace $\RR_+^{n+1}$, we consider the doubled varifold $\tilde{V}=V+A_{\#}V$, where $A$ denotes the reflection across the hyperplane $S=\partial \RR_+^{n+1}$, and establish the crucial identity 
\begin{equation}\label{eq:star}
\partial [\tilde{V}]=\partial [V]+A_{\#}\partial [V]. 
\end{equation}
Observing also that $\spt{\partial [V]}\subset S$ and $A_{\#}|_S=id$, we infer that the boundary of $[\tilde{V}]$ always occurs with even multiplicity, hence vanishes modulo 2. To prove (\ref{eq:star}), we first establish the corresponding formula at the level of polyhedral chains and then conclude by approximation.

To prove our avoidance principle for free boundary  Brakke flows (Theorem \ref{thm:0elliptic10.5}), we first extend the class of test functions in the definition of free boundary Brakke flows. Specifically, via an approximation argument, we show that (\ref{equ:brakke}) actually holds even without assuming $D_{\nu_S}\phi=0$. 
Using this, we can then adapt the argument of Ilmanen \cite[Sec 10.5]{ilm94} (see also \cite[Thm 34]{hw23}) to our setting, and insert, loosely speaking,  the test function $\phi(x,t)=(\gamma-d(x,t))_+^3$ in (\ref{equ:brakke}), where $d(x,t)$ denotes the distance to the smooth comparison flow $\{N_t\}_{t\in[t_0,t_1]}$, and $\gamma>0$ is a small constant.
 Most intriguingly, when applying integration by part, we encounter a new boundary term. Fortunately, using convexity we can show that $D \phi\cdot \nu_S\le 0$ on $S$, and consequently the boundary term has the good sign.

To prove the matching theorem for free boundary Brakke flows (Theorem \ref{thm:hwcpamB6}), we first check some fundamental relationship between outer/inner flows and the boundary of level set flows, where we adapt the arguments from \cite[Prop A.3, Prop B.2, Thm C.1]{hw20} and \cite[Thm 1.8]{bk23} to our setting. With the help of these, we can then adapt the argument of \cite[Prop B.6]{hw20} to our setting, where we choose a sequence of suitable approximating regions $K_i$ by a perturbation argument. Applying free boundary elliptic regularization theory, as provided by \cite[Sec 9]{edelen18}, we obtain $e^{-\frac{z}{\varepsilon}}$-weighted area  minimizing integral $(n+1)$-currents $P^{i,\varepsilon}$ supported in $\bar{\Omega}\times [0,+\infty)$ with $\partial P^{i,\varepsilon}=\partial K_i$. We define $\mu^{i,\varepsilon}_t:=\mu_{P^{i,\varepsilon}_t}$, where $P^{i,\varepsilon}_t:=P^{i,\varepsilon}-\frac{t}{\varepsilon}e_z$, and define $E^{i,\varepsilon}_t$ to be enclosed domain. Using Theorem \ref{thm:limitregcyc}, we can obtain a unit-regular and cyclic free boundary integral Brakke flow $\tilde{\mu}^i_t=\mu^i_t\times \mathcal{H}^1$ starting from $\partial K_i \times \RR$ via 
\begin{equation}\label{equ:0pairconverge}
(\mu^{i,\varepsilon}_t,E^{i,\varepsilon}_t) \stackrel{\varepsilon\to0}{\longrightarrow}(\tilde{\mu}^i_t,\tilde{E}^i_t),
\end{equation}
Here, to be more detailed compared to \cite[Thm B.6]{hw20}, using Theorem \ref{thm:0elliptic10.5} we prove that the reduced boundary satisfies $\partial^* \tilde{E}^i_t\subseteq \spt{\tilde{\mu}^i_t}$, and that $\tilde{E}^i_t=F_t(\tilde{K}_i)$ up to a null set,
where $\tilde{K}_i=K_i\times \RR\subset \Omega\times \RR$. Using this, we can conclude that the support of $\mu_t=\lim\limits_{i\to\infty} \mu_t^i$ is given by the space-time set swept out by the outer flow $\{M_t\}_{t\ge 0}$.
\\

This paper is organized as follows. We collect some preliminaries in Section 2, including the most fundamental definitions and results for free boundary mean curvature flows. In Section 3, we prove Theorem \ref{thm:limitregcyc} and Theorem \ref{thm:reflectprop}. In Section 4, we prove Theorem \ref{thm:0elliptic10.5}. And finally in Section 5, we prove Theorem \ref{thm:hwcpamB6}. 

\medskip

\textbf{Acknowledgements.} The author has been supported by a Blyth Fellowship, a Mary H. Beatty Fellowship, a Department of Mathematics Graduate Program Award and an International Graduate Student Scholarship in Mathematics from the University of Toronto, and he acknowledges his supervisor Robert Haslhofer for his invaluable guidance and support in bringing this paper into fruition.


\section{Preliminaries}

In this section, we provide some essential background about free boundary level set flows and Brakke flows that will be used frequently. 
\subsection{Free boundary level set flow}
We start with the definition of free boundary subsolutions.
\begin{definition}[free boundary subsolution, c.f. \text{\cite[Sec 3]{ehiz22}}] \label{def:setsubsol}
A family of closed sets $\left\{K_t \subset \bar{\Omega}\right\}_{t \in[0,+\infty)}$ is called subsolution of the free boundary mean curvature flow if it satisfies the following avoidance principle: \\
If $\left\{N_t \subset \bar{\Omega} \right\}_{t \in\left[t_0, t_1\right]}$ is a smooth compact free boundary mean curvature flow with $K_{t_0} \cap N_{t_0}=\emptyset$, then $K_t \cap N_t=\emptyset$ for all $t \in\left[t_0, t_1\right]$, where $0 \leq t_0 \leq t_1 < +\infty$. 
\end{definition}

\begin{definition}[free boundary level set flow]\label{def:levelsetflow}
The free boundary level set flow $F_t(K)$ is the maximal subsolution with initial condition $F_0(K)=K$.
\end{definition}

As pointed out by \cite{gs93} and \cite{ehiz22}, we have:
\begin{proposition}[basic properties]\label{prop:fbprop}
The free boundary level set flow has the following basic properties:
\begin{enumerate}[(1)]
\item Consistency with smooth flows: If $\left\{M_t\right\}_{t \in[0, T]}$ is a smooth free boundary mean curvature flow, then $M_t=F_t\left(M_0\right)$.
\item Avoidance: If $K$ and $L$ are disjoint compact sets, then $F_t(K)$ and $F_t(L)$ are also disjoint and $F_t(K \cup L)=F_t(K) \cup F_t(L)$.
\item Inclusion: If $K \subseteq L$, then $F_t(K) \subseteq F_t(L)$.
\item Semigroup property: $F_{t_1+t_2}(K)=F_{t_2}\left(F_{t_1}(K)\right)$.
\end{enumerate}
\end{proposition}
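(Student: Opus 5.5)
We verify the four properties of Proposition \ref{prop:fbprop} directly from Definitions \ref{def:setsubsol} and \ref{def:levelsetflow}. We use two elementary closure facts throughout.

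The first fact is that an arbitrary union of subsolutions, followed by closure, is again a subsolution. Suppose $N_{t_0}$ is compact and disjoint from the closed set $\overline{\bigcup_\alpha K^\alpha_{t_0}}$. Then $N_{t_0}$ has positive distance from it. We perturb $N$ slightly outward to a smooth free boundary flow $N^\delta$ whose tubular neighborhood still misses every $K^\alpha_{t_0}$. If necessary we instead run a shrinking family, using that the space of compact free boundary flows is stable under small perturbation of initial data on a fixed time interval. Each $K^\alpha$ then avoids the $\delta$-neighborhood of $N_t$, so the closure of the union avoids $N_t$. This shows that $F_t(K)$ as defined in \eqref{equ:levelsetflow} is itself a subsolution and is the maximal one.

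The second fact is that time-translates and restrictions of subsolutions are subsolutions. This is immediate, since the defining class of test flows is translation invariant in time.

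For \textbf{semigroup}, set $K'_s := F_{t_1+s}(K)$. By the second fact this is a subsolution starting at $F_{t_1}(K)$, so $F_{t_1+t_2}(K)\subseteq F_{t_2}(F_{t_1}(K))$. Conversely, concatenate: define $L_t=F_t(K)$ for $t\le t_1$ and $L_t=F_{t-t_1}(F_{t_1}(K))$ for $t\ge t_1$. To show $L$ is a subsolution, take a test flow on $[t_0,t_1']$ straddling $t_1$. We apply the avoidance property of the first piece on $[t_0,t_1]$ and then that of the second piece on $[t_1,t_1']$. By maximality, $L_t\subseteq F_t(K)$, which gives the reverse inclusion.

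For \textbf{inclusion}, note that $F_t(K)$ is a subsolution. If $K\subseteq L$, modify it to $\hat K_t := L$ at $t=0$ and $F_t(K)$ for $t>0$. Avoidance tests starting at $t_0=0$ only become weaker when the initial set is enlarged, so $\hat K$ is a subsolution with initial set $L$. Maximality then gives $F_t(K)\subseteq F_t(L)$.

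For \textbf{avoidance}, which is the main obstacle, the difficulty is that $F_t(L)$ is not a smooth flow, so it cannot be used directly as a test flow. Following \cite{gs93,sato94}, we would invoke the comparison principle for viscosity solutions of the Neumann level set equation. Concretely, we represent $F_t(K)$ as the zero set of the viscosity solution $u$ with initial data $u_0=\pm\operatorname{dist}(\cdot,K)$. The key input is the equivalence between the maximal subsolution and this viscosity zero set, as shown in \cite{ehiz22}. We then choose continuous $u_0,v_0$ with $\{u_0\le 0\}\supseteq K$ and $\{v_0\le 0\}\supseteq L$ having disjoint sublevel sets, namely $u_0+v_0>0$ everywhere, and apply comparison to conclude $u+v>0$ for all time. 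Given disjointness, the union formula follows. The inclusion $\supseteq$ comes from the inclusion property. The inclusion $\subseteq$ holds because $F_t(K)\cup F_t(L)$ is a subsolution. Here any test flow $N$ disjoint from $K\cup L$ is disjoint from each of $K$ and $L$ separately, so it avoids each piece.

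For \textbf{consistency}, the inclusion $M_t\subseteq F_t(M_0)$ requires showing that the smooth flow $M_t$ is a subsolution. This is the maximum principle for two smooth compact free boundary flows. At an interior first touching point we use the standard argument. At a boundary touching point the Neumann condition, together with the Hopf lemma applied to the distance function, rules out first contact, as in \cite{stahl96}. For the reverse inclusion, we trap $F_t(M_0)$ between smooth flows of small normal translates $M^{\pm\delta}_0$ on both sides. Avoidance forces $F_t(M_0)$ to lie in the thin region between them, and the region shrinks to $M_t$ as $\delta\to 0$. This uses that each point off $M_t$ is separated from $M_t$ by a small sphere-like free boundary flow, or more simply the viscosity comparison above.
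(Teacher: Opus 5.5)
\textbf{Gap in the union formula in (2).} Your arguments for inclusion (3), for the semigroup property (4), and for reducing disjointness in (2) to the Neumann comparison principle of \cite{gs93,sato94} are fine. The paper gives no proof of this proposition; it only cites \cite{gs93,ehiz22}, so your viscosity input is the same as the paper's.

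The problem is your justification of the inclusion $F_t(K\cup L)\subseteq F_t(K)\cup F_t(L)$. You show that $t\mapsto F_t(K)\cup F_t(L)$ is a subsolution with initial set $K\cup L$ and invoke maximality. That only yields $F_t(K)\cup F_t(L)\subseteq F_t(K\cup L)$, which is the inclusion you already had from (3). The reverse inclusion is never proved.

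Your argument also never uses disjointness, and disjointness is essential. Take the two closed half-circles $K,L$ of a round circle in the plane. Each lies in smooth regions of area $O(\varepsilon)$, which disappear by time $O(\varepsilon)$, so $F_t(K)=F_t(L)=\emptyset$ for $t>0$. Meanwhile $F_t(K\cup L)$ is the shrinking circle.

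One way to close this within the framework you already use:
\begin{enumerate}[(a)]
\item Take $u_0=\min\{\dist{\cdot,K},1\}$, $v_0=\min\{\dist{\cdot,L},1\}$ and $c=\min\{\dist{K,L},1\}>0$, so that $u_0+v_0\ge c$.
\item Since $-v$ and $u-c$ solve the same Neumann problem, comparison gives $u+v\ge c$. This uniform margin is also what makes your disjointness step rigorous; "$u_0+v_0>0$ everywhere" does not by itself give strict positivity for $t>0$.
\item Set $w:=\min\{u,v,c/3\}$. Near every point, $w$ coincides with one of $\min\{u,c/3\}$, $\min\{v,c/3\}$, or the constant $c/3$. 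Each of these is a viscosity solution by invariance under nondecreasing relabelings, so $w$ is a solution.
\item Since $\{w(\cdot,0)=0\}=K\cup L$, uniqueness of the zero-level evolution gives $F_t(K\cup L)=\{w(\cdot,t)=0\}=F_t(K)\cup F_t(L)$.
\end{enumerate}

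\textbf{Two looser steps, both repairable.}

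\emph{Closure of unions.} Avoiding a single perturbed flow $N^\delta_t$ does not keep $K^\alpha_t$ out of a neighborhood of $N_t$. You need the whole one-parameter family of free boundary perturbations $N^s$, $|s|\le\delta$ (for instance built as in Lemma \ref{lem:phivariation}). Flow them on $[t_0,t_1]$ using continuous dependence, and check that they sweep out a neighborhood of each $N_t$.

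\emph{Consistency.} Avoiding $M^{\pm\delta}_t$ does not confine $F_t(M_0)$ to the slab between them. You need the full foliation by the flows of $M^s_0$, $0<|s|\le\delta$. You also need a barrier argument, with small free boundary spheres and the semigroup property, showing that $F_t(M_0)$ cannot appear away from that neighborhood. Alternatively, quote the viscosity consistency theorem directly.
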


\subsection{Free boundary varifolds and Brakke flows}
Recall from \cite[Sec 4]{allard72} that the first variation of an integral varifold  $V$ is given by
$$
\delta V(X)=\int \operatorname{div}_V(X) d \mu_V, \quad X \in C_c^1\left(\mathbb{R}^{n+1}, \mathbb{R}^{n+1}\right).
$$
We will assume that $\delta V$ is locally finite, i.e.
$$
|\delta V(X)| \leq C_W|X|_{C^0} \quad \forall X \in C_c^1\left(W, \mathbb{R}^{n+1}\right), W \subset \subset \mathbb{R}^{n+1},
$$
so that we can define the total variation (Radon) measure
$$
\|\delta V\|(W)=\sup \left\{\delta V(Y): Y \in C_c^1\left(W, \mathbb{R}^{n+1}\right),|Y| \leq 1\right\} 
$$
and therefore obtain
\begin{equation}\label{equ:fvf}
\delta V(X)=-\int H \cdot X d \mu_V+\int X \cdot n_V d \sigma_V,\ \forall X \in C_c^1\left(\mathbb{R}^{n+1}, \mathbb{R}^{n+1}\right),
\end{equation}
where $H=-\frac{d\|\delta V\|}{d \mu_V}$ is the generalized mean curvature vector, $\sigma_V=\|\delta V\|_{\text {sing }}$ is the generalized boundary measure, and $n_V$ is the generalized outwards conormal. Now we can define:

\begin{definition}[free boundary varifold, \cite{edelen18}]\label{def:freebdyvfd}
An integral free boundary varifold in $\Omega$ is an integral $n$-varifold $V$ defined in $\mathbb{R}^{n+1}$ with $\operatorname{spt}(\mu_{V})\subset \bar{\Omega}$, such that
\begin{equation}\label{equ:ede5}
\delta V(X)=-\int S^T(H) \cdot X d \mu_V, \quad \forall X \in \mathcal{T}(S) \cap C_c^1
\end{equation} 
for some $S^T(H) \in L_{l o c}^1\left(\mathbb{R}^{n+1}, \mathbb{R}^{n+1}; \mu_V\right)$.
 Here $\mathcal{T}(S)$ is the space of vector fields in $\RR^{n+1}$ that lie tangential when restricted to $S$.
\end{definition}
By \cite[Prop 3.2]{edelen18}, $V$ has free boundary in $S$ if and only if $\|\delta V\|$ is locally bounded, $\operatorname{spt}(\sigma_V)\subset S$ and $n_{V}=\nu_S \text{ at }\sigma_V\text{-a.e. }x$, where $\sigma_V$ is the singular part of the measure.

The admissible test functions are set to be:
$$
\mathcal{B} \mathcal{T}(S, [a,b])=\left\{\phi \in C_c^1\left(\mathbb{R}^{n+1} \times [a,b], \mathbb{R}_{+}\right): D \phi(\cdot, t) \in \mathcal{T}(S) \quad \forall t \in [a,b]\right\}
$$ 
\begin{definition}[free boundary integral Brakke flow, \cite{edelen18}]\label{def:fbbrakke}
We say a collection $\{\mu_t\}_{t \in I}$ of Radon measures is an $n$-dimensional free boundary integral Brakke flow in $\Omega$ if the following holds:
\begin{enumerate}[(1)]
\item for a.e. $t \in I$, $\operatorname{spt}(\mu_t)\subset \bar{\Omega}$ and $\mu_t=\mu_{V_t}$ for some integral $n$-varifold $V_t$ with free boundary in $S =\partial \Omega$, having
$$
S^T\left(H_{V_t}\right) \in L_{l o c}^2\left(\mathbb{R}^{n+1}, \mathbb{R}^{n+1} ; \mu_{V_t}\right);
$$
\item for any finite interval $[a,b] \subset I$, and every $\phi \in \mathcal{B T}(S, [a,b])$, we have 
\begin{multline}\label{equ:ede12}
\int \phi(\cdot, b) d \mu_b-\int \phi(\cdot, a) d \mu_a \\ \leq \int_a^b \int \left(-\left|S^T(H)\right|^2 \phi+S^T(H) \cdot D \phi+\partial_t \phi\right) d \mu_t d t.
\end{multline}
\end{enumerate}
\end{definition} 
It is easy to see that any classical mean curvature flow with free boundary $\left\{M_t^n\right\}$ is a free boundary Brakke flow, by taking $\mu_t=\mathcal{H}^n\left\llcorner M_t\right.$.  

\subsection{Monotonicity formula}
Inherited from Edelen's fundamental work \cite{edelen18}, we will keep on using his upper-semicontinuous monotone quantity. 
Recall from \cite[Def 5.0.1]{edelen18} that for $x\in \RR^{n+1}$ and $t<0$ one considers the adjoint heat kernel 

$$\rho(x,t)=(4\pi(-t))^{-\frac n2}e^{-\frac{|x|^2}{4(-t)}}$$
and the cut-off function 
$$
\phi_{\kappa}(x, t) =\left(1-\kappa^{-2}\left(\kappa^2 / (-t)\right)^{3 / 4}\left(|x|^2-\alpha (-t)\right)\right)_{+}^4.
$$
Here $\kappa$ is the cut-off radius chosen small depending on $S$ and $\alpha$ is some large number as determined in \cite[Thm 4.9]{edelen18}.
Moreover, recall that
$\tilde{x}=2 \zeta(x)-x$ denotes the reflection point of $x$ across $S$, where $\zeta(x)$ is the nearest point projection of $x$ onto $S$.
The reflected, truncated heat kernel centered at $X_0=(x_0,t_0)$ is defined to be
$$f_{S,\kappa,X_0}(x,t)=\phi_{\kappa}(x-x_0,t-t_0)\rho(x-x_0,t-t_0)+\phi_{\kappa}(\tilde{x}-x_0,t-t_0)\rho(\tilde{x}-x_0,t-t_0)$$

For $\mathcal{M} =\{\mu_t\}_{t \geq 0}$ a free boundary Brakke flow in $\Omega$, the reflected Gaussian area of $\mathcal{M}$ at $X_0=\left(x_0, t_0\right)$ is defined to be
$$
\Theta_\text{refl,$\kappa$}\left(\mathcal{M}, X_0, r\right)=\int f_{S, \kappa, X_0}\left(x, t_0-r^2\right) d \mu_{t_0-r^2}(x),
$$
where $r$ is bounded above by a constant depending on $\kappa$ and $t_0$. When $x_0$ is in the interior away from $S$, it is just the usual truncated Gaussian area for Brakke flow. One can also check that $\Theta_\text{refl,$\kappa$}$ is parabolic scaling invariant.
\begin{theorem}[monotonicity formula, \text{\cite[Thm 5.5]{edelen18}}]\label{thm:ede5.5}
Let $\mathcal{M} =\{\mu_t\}_{t \geq 0}$ be a free boundary Brakke flow in $\Omega$. For any $X_0=(x_0,t_0)\in \bar{\Omega}\times (0,\infty)$ and any $\kappa$ small enough, we have that
$$
r \mapsto e^{A \sqrt{r}} \Theta_{\mathrm{refl},\kappa}\left(\mathcal{M}, X_0, r\right)+ M r^2
$$
is increasing when $r$ is small. Here $A$, $M$ are constants depending only on $X_0$ and $\kappa$.
\end{theorem}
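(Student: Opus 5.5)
The statement just given is Edelen's monotonicity formula (Theorem \ref{thm:ede5.5}). We quote it from \cite[Thm 5.5]{edelen18}, but the argument runs as follows. The plan is to test the free boundary Brakke inequality (\ref{equ:ede12}) against the reflected, truncated heat kernel $f=f_{S,\kappa,X_0}$. We then compare the resulting differential inequality in $t$ with the exact computation that holds for a smooth flow in a halfspace.

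\textbf{Step 1: admissibility.} The function $f$ is not of class $C^1_c$ across the cut-locus of the nearest-point projection $\zeta$. However, $\phi_\kappa$ localizes to a ball of radius $\sim\kappa$, and for $\kappa$ small (depending on the reach of $S$) the map $x\mapsto\tilde x$ is a smooth involution there. Since $f(x)=g(x)+g(\tilde x)$, symmetry gives $Df\cdot\nu_S=0$ on $S$, so $f\in\mathcal{BT}(S,[a,b])$ for $t<t_0$. We may therefore insert $f$ into (\ref{equ:ede12}).

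\textbf{Step 2: the main computation.} Complete the square to obtain
\[
\frac{d}{dt}\int f\,d\mu_t \le -\int \Big|S^TH-\frac{S^\perp Df}{f}\Big|^2 f\,d\mu_t+\int\Big(\partial_t f+\operatorname{div}_{V_t}Df+\frac{|S^\perp Df|^2}{f}\Big)\,d\mu_t,
\]
after writing $\int S^TH\cdot Df$ via the first variation, which is allowed since $Df$ is tangential on $S$. For the unreflected term $g=\phi_\kappa\rho$, Huisken's identity $\partial_t\rho+\operatorname{div}_V D\rho+|D^\perp\rho|^2/\rho=0$ holds exactly. The cutoff contributes a nonpositive term up to an error bounded by a constant $M$. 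This is where $\alpha$ is chosen large and the exponent $3/4$ is used.

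\textbf{Step 3: the reflected term, the main obstacle.} The reflected piece $g(\tilde x)$ satisfies the heat-kernel identity only up to errors. These errors come from the deviation of $D\tilde x$ from a linear isometry, which is $O(|x-\zeta(x)|)$ times the curvature of $S$, and from second derivatives of $\zeta$. We must show that
\[
\Big|\partial_t g(\tilde x)+\operatorname{div}_V D[g(\tilde x)]+\frac{|D^\perp g(\tilde x)|^2}{g(\tilde x)}\Big| \le C\Big(\frac{1}{\sqrt{t_0-t}}\,g(\tilde x)+1\Big).
\]
Here the factor $|x-x_0|^2/(t_0-t)$ is absorbed using $|x-x_0|\lesssim\sqrt{t_0-t}$ on the effective support, and the Gaussian tail handles the rest. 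The mixed term in $|S^\perp Df|^2/f$ is treated by convexity of $(a,b)\mapsto |a|^2/b$, which gives $|D(g_1+g_2)|^2/(g_1+g_2)\le |Dg_1|^2/g_1+|Dg_2|^2/g_2$. Carrying out these estimates carefully is the hard part.

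\textbf{Step 4: integration.} Set $r^2=t_0-t$. The differential inequality becomes
\[
\frac{d}{dr}\Theta \ge -\frac{A}{2\sqrt r}\,\Theta-Cr.
\]
Multiplying by the integrating factor $e^{A\sqrt r}$ and adding $Mr^2$ yields the claimed monotonicity for small $r$.
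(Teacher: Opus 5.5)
\textbf{This is a genuine gap, located in Steps 2--3.} The paper does not prove this statement; it quotes it from \cite[Thm 5.5]{edelen18}. Your overall plan is the right one: test (\ref{equ:ede12}) with the reflected truncated kernel, compare with Huisken's identity, and integrate in $r$. Step 1 is fine.

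The problem is that you complete the square around $D^\perp f/f$. Here $D^\perp$ has to mean projection onto the normal space of $V_t$, not of $S$. Write $y=x-x_0$, $\tau=t_0-t$ and $\psi=\kappa^{-1/2}\tau^{-3/4}(|y|^2-\alpha\tau)$, so that $\phi_\kappa=(1-\psi)_+^4$. After Huisken's identity, the unreflected piece $g=\phi_\kappa\rho$ contributes
\[
\rho(\partial_t+\operatorname{div}_{V_t}D)\phi_\kappa+2D\phi_\kappa\cdot D\rho+\rho\,\frac{|D^\perp\phi_\kappa|^2}{\phi_\kappa}\;\ge\;\kappa^{-1/2}\tau^{-3/4}(1-\psi)_+^3\Big(5\frac{|y|^2}{\tau}-\alpha-8n\Big)\rho .
\]
This holds because the cross term $2D\phi_\kappa\cdot D\rho=8\kappa^{-1/2}\tau^{-3/4}(1-\psi)_+^3\frac{|y|^2}{\tau}\rho$ is positive.
\begin{itemize}
\item At $|y|^2=\gamma\tau$ with $\gamma$ large, this contribution is about $5\gamma\kappa^{-1/2}\tau^{-3/4}g$, and $\gamma$ ranges up to $\sim\kappa^{1/2}\tau^{-1/4}$ on $\operatorname{spt}\phi_\kappa$. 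So the cutoff does not contribute something ``nonpositive up to a constant''.
\item The same expression, in the variable $\tilde y=\tilde x-x_0$, is part of the quantity you bound in Step 3. Your Step 3 inequality therefore already fails for flat $S$, where the reflection is an isometry.
\item The curvature errors of the reflection are in general of order $(|\tilde y|/\tau+|\tilde y|^3/\tau^2)\rho(\tilde y,-\tau)$. This is not $O(\tau^{-1/2})\rho(\tilde y,-\tau)$ once $|\tilde y|^2\gg\tau$.
\item Letting ``the Gaussian tail handle the rest'' would need scale-invariant bounds $\mu_t(B_R(x_0))\le CR^n$. Those are an output of monotonicity (Corollary \ref{cor:boundgauss}), not an input.
\item Splitting the tail pointwise at $|y|^2\sim\tau\log(1/\tau)$ only gives $\frac{d}{dt}\int f\,d\mu_t\le C\tau^{-3/4}\log(1/\tau)\int f\,d\mu_t+M$. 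That produces a factor $e^{A\sqrt r\log(1/r)}$, not $e^{A\sqrt r}$.
\item Conversely, the $\tau^{-1/2}$ error claimed in Step 3 would give $e^{Cr}$, so your argument does not explain the $r^{-1/2}$ coefficient used in Step 4.
\end{itemize}

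\textbf{Fix: use Ecker's localization.} Set $\tilde\rho(x,t)=\rho(\tilde x-x_0,t-t_0)$ and $\tilde\phi(x,t)=\phi_\kappa(\tilde x-x_0,t-t_0)$. Complete the squares around $D^\perp\rho/\rho$ and $D^\perp\tilde\rho/\tilde\rho$ separately. Then apply the first variation to $X=\rho D\phi_\kappa-\phi_\kappa D\rho+\tilde\rho D\tilde\phi-\tilde\phi D\tilde\rho$. This field is tangential along $S$ by the same symmetry as in your Step 1, and you also use $S^TH\perp T_xV_t$ a.e. The result is
\[
\frac{d}{dt}\int f\,d\mu_t\le\int\Big(\tilde\phi\,E+\rho\,(\partial_t-\operatorname{div}_{V_t}D)\phi_\kappa+\tilde\rho\,(\partial_t-\operatorname{div}_{V_t}D)\tilde\phi\Big)\,d\mu_t ,
\]
where $E=\partial_t\tilde\rho+\operatorname{div}_{V_t}D\tilde\rho+|D^\perp\tilde\rho|^2/\tilde\rho$ is the Huisken defect of $\tilde\rho$.

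Now
\[
(\partial_t-\operatorname{div}_{V_t}D)\phi_\kappa=-\kappa^{-1/2}\tau^{-3/4}(1-\psi)_+^3\big(3\tfrac{|y|^2}{\tau}+\alpha-8n\big)-12(1-\psi)_+^2|\nabla^{V_t}\psi|^2 ,
\]
and the same holds for $\tilde\phi$ up to lower-order corrections. For $\alpha>8n$ this is a good term of size $\kappa^{-1/2}\tau^{-3/4}(1+|y|^2/\tau)$. On $\operatorname{spt}\tilde\phi$ we have $|\tilde y|^2\le\alpha\tau+\kappa^{1/2}\tau^{3/4}$, so for small $\tau$ this good term absorbs $\tilde\phi|E|$. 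That is the real role of $\alpha$ and of the exponent $3/4$, not merely controlling the unreflected cutoff. With this in place, Step 4 goes through.
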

As corollaries of the theorem, one can define the Gaussian density by taking $r\to 0$. Since \cite[Cor 6.5]{edelen18} that this limit is independent of $\kappa$, one gets:
\begin{definition}[reflected Gaussian density]\label{def:ede6.5.1}
The reflected Gaussian density at a point is
$$
\Theta_\mathrm{refl}(\mathcal{M}, X)=\lim _{r \rightarrow 0} \Theta_{\mathrm{refl},\kappa}(\mathcal{M}, X, r).
$$
\end{definition}
Moreover, we have 
\begin{corollary}[boundedness of area ratios]\label{cor:boundgauss}
Let $\MM=\{\mu_t\}_{t\ge 0}$ be a free boundary Brakke flow in $\Omega$ and assume $\Theta_\mathrm{refl}(\MM,(p,T))<\infty$. Then
there exists some $\varepsilon>0$, s.t. $\dfrac{|M_{T-r^2}\cap B_r(p)|}{r^n}\le C$ for any $r<\varepsilon$.
\end{corollary}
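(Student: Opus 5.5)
The estimate will come directly from Edelen's monotonicity formula (Theorem \ref{thm:ede5.5}): the reflected Gaussian area at scale $r$ bounds the mass in $B_r(p)$ at time $T-r^2$, because the reflected truncated heat kernel is bounded below on that ball by a constant multiple of $r^{-n}$.

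\textbf{Uniform bound on the Gaussian area.} Fix $\kappa$ small, as allowed in Theorem \ref{thm:ede5.5}, and set $X_0=(p,T)$. The function
$$
g(r)=e^{A\sqrt r}\,\Theta_{\mathrm{refl},\kappa}(\MM,X_0,r)+Mr^2
$$
is increasing on some interval $(0,r_0)$. So for $r<r_0$ we have $\Theta_{\mathrm{refl},\kappa}(\MM,X_0,r)\le g(r_0/2)$ whenever $r\le r_0/2$. This quantity is a fixed finite number, since it is the integral of a compactly supported bounded function against a Radon measure. Hence $\Theta_{\mathrm{refl},\kappa}(\MM,X_0,r)\le C_1$ for all $r\le r_0/2$. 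The hypothesis $\Theta_\mathrm{refl}(\MM,(p,T))<\infty$ is consistent with this and makes $C_1$ close to $\Theta_\mathrm{refl}$ for small $r$, although monotonicity alone already gives finiteness.

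\textbf{Lower bound on the kernel over the ball.} Take $x\in B_r(p)$ and $t=T-r^2$, so that $-(t-T)=r^2$. The first summand of $f_{S,\kappa,X_0}$ satisfies
$$
\rho(x-p,-r^2)=(4\pi r^2)^{-n/2}e^{-|x-p|^2/4r^2}\ge (4\pi)^{-n/2}e^{-1/4}\,r^{-n}.
$$
For the cutoff factor I use $|x-p|^2-\alpha r^2\le (1-\alpha)r^2\le 0$ once $\alpha\ge 1$. Then $\phi_\kappa\ge 1$, and in particular $\phi_\kappa\ge 1/2$ is more than enough. The second, reflected summand is nonnegative, so it can simply be dropped. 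Also, $x\mapsto\tilde x$ needs $x$ to be near $S$ for the nearest-point projection to be defined. I take $\kappa$ small, as in Edelen's setup, so that the cutoff makes the reflected term vanish away from a tubular neighborhood; this causes no trouble because that term is only being discarded.

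\textbf{Conclusion.} Combining the two bounds, for $r<\varepsilon:=r_0/2$,
$$
c_n r^{-n}\,\mu_{T-r^2}(B_r(p))\le \int f_{S,\kappa,X_0}(\cdot,T-r^2)\,d\mu_{T-r^2}\le C_1,
$$
which gives the claim with $C=C_1/c_n$. Here $|M_{T-r^2}\cap B_r(p)|$ is read as $\mu_{T-r^2}(B_r(p))$.

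\textbf{Main point to check.} The only delicate step is the lower bound for the cutoff $\phi_\kappa$ on $B_r(p)$. This requires $\alpha\ge1$; Edelen takes $\alpha$ large, so this holds. If instead one wanted a normalization with $\alpha$ small, I would prove the estimate for the ball $B_{r\sqrt{\alpha}/2}(p)$ and then cover $B_r(p)$ using scale invariance, at the cost of changing the constant.
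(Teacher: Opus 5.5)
Your proof is correct and is essentially the argument the paper intends: the paper states this corollary without proof as an immediate consequence of Edelen's monotonicity formula, namely that monotonicity bounds $\Theta_{\mathrm{refl},\kappa}(\MM,(p,T),r)$ for small $r$, and discarding the reflected term gives $f_{S,\kappa,(p,T)}(\cdot,T-r^2)\ge c_n r^{-n}$ on $B_r(p)$. Your fallback for small $\alpha$ is not needed anyway: for $|x-p|\le r$ the argument inside the cutoff is at most $\kappa^{-1/2}r^{1/2}$, so $\phi_\kappa\ge 2^{-4}$ once $r\le \kappa/4$, for any $\alpha\ge 0$.
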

Since the convergence of Brakke flows is defined with respect to compact exhaustions, this corollary is essential for running blow ups and applying Edelen's compactness theorem \cite[Thm 4.14]{edelen18}.

\subsection{Tangent flows}
Let $x_0 \in \bar{\Omega}, t_0>0$, and $\lambda_i \rightarrow \infty$. Write $X_0=\left(x_0, t_0\right)$. By \cite[Prop 6.2]{edelen18}, after passing to a subsequence, there is an ancient $\mathcal{M}^{\prime}$ so that
$$
\mathcal{D}_{\lambda_i}\left(\mathcal{M}-X_0\right) \rightarrow \mathcal{M}^{\prime}
$$
Here $\mathcal{M}^{\prime}$ is either a Brakke flow in $\mathbb{R}^{n+1}$ (if $x_0 \notin S$), or a free boundary Brakke flow in some half-space in $\mathbb{R}^{n+1}$ (if $x_0 \in S$).

If $\mathcal{M}^{\prime}$ has free boundary it can be reflected to obtain a Brakke flow $\tilde{\mathcal{M}}^{\prime}$ in $\mathbb{R}^{n+1}$ (without boundary). Otherwise simply let $\tilde{\mathcal{M}}^{\prime}=\mathcal{M}^{\prime}$. Then by \cite[Thm 6.4]{edelen18}, we know that 
\begin{theorem}[tangent flows]\label{thm:ede6.4}
The (reflected) Brakke flow $\tilde{\mathcal{M}}^{\prime}$  is an ancient selfshrinker, having density
$$
\Theta\left(\tilde{\mathcal{M}}^{\prime}\right)=\Theta_\mathrm{refl}\left(\mathcal{M}, X_0\right).
$$
\end{theorem}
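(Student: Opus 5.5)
The idea is to use the scale invariance of the reflected Gaussian area together with Edelen's monotonicity formula (Theorem \ref{thm:ede5.5}). Along the blow-up sequence the area should become independent of the scale, and the equality case of Huisken's monotonicity formula, applied to the reflected limit flow, should then force the self-shrinker equation. I treat the boundary case $x_0\in S$; the interior case is the same argument with the reflection term absent.

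\textbf{Step 1: the density at every scale.} Write $\MM^i=\mathcal{D}_{\lambda_i}(\MM-X_0)$. By parabolic scale invariance,
$$\Theta_{\mathrm{refl},\lambda_i\kappa}(\MM^i,0,r)=\Theta_{\mathrm{refl},\kappa}(\MM,X_0,r/\lambda_i).$$
Here the reflection is taken across the rescaled barrier $S_i=\lambda_i(S-x_0)$. Theorem \ref{thm:ede5.5} shows that $r\mapsto e^{A\sqrt r}\Theta_{\mathrm{refl},\kappa}(\MM,X_0,r)+Mr^2$ is increasing. Hence $\Theta_{\mathrm{refl},\kappa}(\MM,X_0,r/\lambda_i)$ converges to $\Theta_{\mathrm{refl}}(\MM,X_0)$ for every fixed $r>0$. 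In the rescaled picture the error factors are $e^{A\sqrt{r/\lambda_i}}$ and $Mr^2/\lambda_i^2$, and both tend to the trivial values.

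\textbf{Step 2: passing to the limit.} As $i\to\infty$, $S_i$ converges smoothly on compact sets to a hyperplane $P$ bounding the half-space of $\MM'$. The nearest-point reflection across $S_i$ converges locally in $C^1$ to the linear reflection $A_P$. The cutoff radius $\lambda_i\kappa\to\infty$, so $\phi_{\lambda_i\kappa}\to1$ locally uniformly. Thus $f_{S_i,\lambda_i\kappa,0}\to\rho(x,t)+\rho(A_Px,t)$ locally uniformly.

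To conclude
$$\int\big(\rho+\rho\circ A_P\big)(\cdot,-r^2)\,d\mu'_{-r^2}=\lim_i \Theta_{\mathrm{refl},\lambda_i\kappa}(\MM^i,0,r),$$
I need two facts. The first is convergence of the measures at time $-r^2$; this holds for all $r$ outside a countable set, and monotonicity then extends the identity to every $r$. The second is uniform control of the Gaussian tails. For the tails I would use Corollary \ref{cor:boundgauss}, applied uniformly near $X_0$, to get area bounds $\mu^i_{-r^2}(B_R)\le CR^n$. These make the tail contribution $\int_{|x|>R}\rho\,d\mu^i$ small uniformly in $i$. I expect this limit exchange, in particular handling both the tails and the passage from cutoff to no cutoff, to be the main technical obstacle.

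\textbf{Step 3: constant Gaussian area of the reflected flow.} The left-hand side of the identity in Step 2 is exactly the Gaussian area $\int\rho(\cdot,-r^2)\,d\tilde\mu'_{-r^2}$ of the reflected flow $\tilde\MM'=\MM'+(A_P)_\#\MM'$. It is therefore constant in $r$, equal to $\Theta_{\mathrm{refl}}(\MM,X_0)$. By Edelen's reflection, $\tilde\MM'$ is an integral Brakke flow without boundary in $\RR^{n+1}$, so Huisken's monotonicity formula applies:
$$\frac{d}{dt}\int\rho\,d\tilde\mu'_t\le-\int\Big|H+\frac{x^\perp}{2(-t)}\Big|^2\rho\,d\tilde\mu'_t.$$
Constancy of the Gaussian area then gives $H=-\frac{x^\perp}{2(-t)}$ for $\tilde\mu'_t$-a.e. $x$ and a.e. $t<0$. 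Standard arguments upgrade this to $\tilde\mu'_t=\mathcal{H}^n$-scaling of $\tilde\mu'_{-1}$, i.e. $\tilde\MM'$ is an ancient backwards self-similar flow. Its density at the origin, and hence its Gaussian density $\Theta(\tilde\MM')$, equals the constant value $\Theta_{\mathrm{refl}}(\MM,X_0)$.
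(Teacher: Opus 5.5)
Your proposal is correct. The paper gives no proof of this statement; it imports it directly from \cite[Thm 6.4]{edelen18}, and your argument is the usual blow-up proof of that result: parabolic scale invariance of $\Theta_{\mathrm{refl},\kappa}$ plus Theorem \ref{thm:ede5.5} make the Gaussian area of the reflected limit $\tilde{\MM}'$ constant and equal to $\Theta_{\mathrm{refl}}(\MM,X_0)$, and then the equality case of Huisken's monotonicity for the reflected flow, followed by the standard upgrade to self-similarity, finishes it.

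The two technical points you flag are the only ones needing care, and both are routine. The bounds $\mu^i_t(B_R(y))\le CR^n$ come from the monotonicity formula applied at all centers near $X_0$, not just at $X_0$ at matched scales as in Corollary \ref{cor:boundgauss}. The tail of the reflected term is controlled because $|\tilde x-x_0|$ is comparable to $|x-x_0|$ on the support of the cutoff, and $\phi_{\lambda_i\kappa}$ stays uniformly bounded (it may exceed $1$, but tends to $1$).
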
 

\section{Unit-regular and cyclic free boundary flows}
In this section, we will prove the preservation of some essential properties of free boundary Brakke flows under limits and reflection. Recall from the introduction that we have defined unit-regularity and cyclicity in Definition \ref{def:regcyc}.
The preservation under weak limits has been stated in Theorem \ref{thm:limitregcyc} and we put it in an equivalent form for convenience.
\begin{theorem*}[preservation under weak limits]\label{thm:limitregcyc1}
Suppose $\mathcal{M}^i$ are integral, unit-regular, cyclic free boundary Brakke flows in $\Omega_i$ that converge weakly to some free boundary Brakke flow $\mathcal{M}$ in $\Omega$, then $\mathcal{M}$ is also integral, unit-regular and cyclic. 
\end{theorem*}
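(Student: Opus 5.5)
We treat the three properties in turn; integrality is already given by Edelen's compactness theorem \cite[Thm 4.14]{edelen18}. That theorem yields that the limit $\MM$ is a free boundary integral Brakke flow and that, for a.e.\ $t$, after passing to a subsequence depending on $t$, $V^i_t\to V_t$ as integral varifolds. The substantive parts are unit-regularity and cyclicity.

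\textbf{Unit-regularity.} We follow \cite[Thm 4.2]{sw20}, with Gaussian density replaced by $\Theta_\mathrm{refl}$. Let $X=(x,t)$ satisfy $\Theta_\mathrm{refl}(\MM,X)=1$.

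First, by Theorem \ref{thm:ede5.5} and continuity of $\Theta_{\mathrm{refl},\kappa}(\cdot,\cdot,r)$ under weak convergence, for fixed small $r$ we get $\Theta_{\mathrm{refl},\kappa}(\MM^i,Y,r)\le 1+\delta$ for all $Y$ in a small space-time neighborhood of $X$ and $i$ large. The slight defect in $r$ is absorbed by the factors $e^{A\sqrt r}$ and $Mr^2$, which requires $\kappa$ and the constants $A,M$ to be uniform as $\Omega_i\to\Omega$ smoothly.

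Second, we invoke Edelen's $\varepsilon$-regularity theorem \cite{edelen18} for free boundary flows. Here the hypothesis that $\MM^i$ is unit-regular is exactly what prevents sudden vanishing: smooth sheets cannot disappear, so a bound $\Theta_\mathrm{refl}\le 1+\delta$ at all nearby points forces every density-one point of $\MM^i$ to be smooth. This gives uniform curvature bounds for $\MM^i$ in a two-sided parabolic neighborhood $B(x,\varepsilon)\times[t-\varepsilon^2,t+\varepsilon^2]$.

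Third, we pass to the limit. For points near $S$ we use the free boundary (Neumann) version of the local estimates and convergence of the barriers $S_i\to S$. The $\MM^i$ then converge smoothly there, so $\MM$ is smooth with free boundary near $X$.

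The main obstacle is the two-sided (future) part of this argument. In \cite{sw20} it is handled by contradiction: a blow-up at a point of density greater than one produces a flow that is quasistatic and planar but vanishes, which contradicts unit-regularity of the approximants. We will run the same blow-up. If the blow-up point lies near the boundary, the limit is a half-space free boundary flow, and we reflect it as in Theorem \ref{thm:ede6.4} to reduce to the no-boundary case. This uses that a multiplicity-one half-plane meeting $S$ orthogonally has $\Theta_\mathrm{refl}=1$.

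\textbf{Cyclicity.} For a.e.\ $t$ we pass to a subsequence with $V^i_t\to V_t$ as varifolds and with $\|\delta V^i_t\|$ locally bounded, which holds for a.e.\ $t$ by Fatou applied to the $L^2$ bound on $S^TH$. Fix an open $U\subset\subset\Omega$; then $U\subset\Omega_i$ for large $i$.

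On $U$ the varifolds $V^i_t$ have bounded first variation and $\partial[V^i_t]\llcorner U=0$. White's theorem \cite{whi09currents} then gives that $[V^i_t]\llcorner U\to[V_t]\llcorner U$ in the flat topology mod 2, and hence $\partial[V_t]\llcorner U=0$.

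Since $U$ was arbitrary, $\spt{\partial[V_t]}\subset\bar\Omega\setminus\Omega=S$. The one point needing care is that the definition only requires $\spt{\partial[V^i_t]}\subset S_i$, which is harmless because $S_i$ eventually leaves $U$.
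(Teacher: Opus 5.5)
Your treatment of integrality and cyclicity is sound and follows the paper. You restrict to $U\subset\subset\Omega$, where the flows have uniformly bounded first variation for a.e.\ $t$ and $\partial[V^i_t]$ misses $U$, and then apply \cite{whi09currents}. The gap is in the unit-regularity part, at your second step.

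Your first step only yields $1\le\Theta_{\mathrm{refl}}(\MM^i,Y)\le 1+2\delta$ at points $Y$ of $\MM^i$ near $X$. Unit-regularity of $\MM^i$ says nothing at such points unless the density is exactly one. The sentence ``a bound $\Theta_{\mathrm{refl}}\le 1+\delta$ at all nearby points forces every density-one point of $\MM^i$ to be smooth'' merely restates the hypothesis.

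What is missing is the gap theorem. A tangent flow $\mathcal{T}$ of $\MM^i$ at $Y$ is, after reflection (Theorem \ref{thm:ede6.4}), a shrinker whose density is close to $1$, hence equal to $1$ by \cite[Cor 8.4]{edelen18}. Only this gives that every point of $\MM^i$ in a fixed two-sided neighborhood $\mathcal{U}$ of $X$ has density one. Then unit-regularity makes $\MM^i\cap\mathcal{U}$ a smooth free boundary flow, and only then does the local regularity theory \cite[Thm 8.1]{edelen18} give uniform estimates on $\MM^i\cap\mathcal{U}$, as in the paper.

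You cannot bypass this by applying an $\varepsilon$-regularity statement to $\MM^i$ directly. For general free boundary Brakke flows, small density ratios do not give two-sided regularity: the quasistatic half-plane that vanishes at time $0$ has all reflected density ratios $\le 1$.

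With the gap theorem in place, the ``future'' part you single out as the main obstacle needs no separate argument. Upper semicontinuity and the gap apply at all space-time points near $X$, including later times, and unit-regularity is itself a two-sided condition. So the smooth convergence already holds on a two-sided neighborhood; this is exactly how the paper, following \cite[Thm 4.2]{sw20}, concludes.

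Your proposed blow-up is not a valid substitute, for two reasons. First, the vanishing time of a quasistatic plane is a point of density one, not greater than one. Second, deriving a contradiction from ``unit-regularity of the approximants'' when rescalings of $\MM^i$ converge to a vanishing plane is precisely the statement being proved. The argument is therefore circular unless you run the gap-plus-local-regularity argument above anyway.
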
 

\begin{proof}
First of all, the preservation of integrality has already been shown in \cite[Thm 4.14]{edelen18}.

To prove the preservation of unit-regularity, we only need to show that if $X$ is a point of $\mathcal{M}$ at which the reflected Gaussian density is 1, then the $\mathcal{M}^i$ converge smoothly to $\mathcal{M}$ in a space-time neighbourhood of $X$.

Let $X=(x_0,t_0)\in\mathcal{M}$. If $x_0\in \Omega$, we can restrict to a small neighborhood to get Brakke flow without boundary and directly apply \cite[Thm 4.2]{sw20}. 

To deal with the case when $x_0 \in \partial \Omega$, we generalize the argument of \cite[Thm 4.2]{sw20} to the free boundary setting.
Suppose that $X_i \in \mathcal{M}^i$ converge to $X$ and $\Theta_\mathrm{refl}(\mathcal{M}, X)=1$. Let $\Theta_i=\Theta_\mathrm{refl}\left(\mathcal{M}^i, X_i\right)$. By upper semicontinuity of the reflected Gaussian density (see [Edelen Lem 7.2]), $\Theta_i \rightarrow 1$. Let $\mathcal{T}^i$ be a tangent flow to $\mathcal{M}^i$ at $X_i$. Then $\Theta_\mathrm{refl}\left(\mathcal{T}^i, O\right)=\Theta_i \rightarrow 1$, so (by [Edelen, Cor 8.4]), $\Theta_i=\Theta_\mathrm{refl}\left(\mathcal{T}^i, O\right)=1$ for all sufficiently large i. Since $\mathcal{M}^i$ is unit-regular, this implies that $\mathcal{M}^i$ is regular at $X_i$. Therefore, there is a space-time neighbourhood $\mathcal{U}$ of $X$ such that $\mathcal{M}^i \cap \mathcal{U}$ is smooth for all sufficiently large $i$. By choosing $\mathcal{U}$ small (and by monotonicity), we can arrange that the reflected Gaussian density ratios of $\mathcal{M}^i \cap \mathcal{U}$ are $\leq 1+\varepsilon$ (for any specified $\varepsilon>0$). The local regularity theory in [Edelen, Thm 8.1] then gives uniform $C^{2, \alpha}$ estimates (thus higher order by Schauder estimates) on the $\mathcal{M}^i \cap \mathcal{U}$.

Secondly, we prove that being cyclic is preserved.
By definition, we have $\operatorname{spt}(\partial [V^i_{\mu_t}])\subset S$. Note that $$\mathcal{M}^i\to \mathcal{M} \quad\text{as Brakke flows with boundary}$$ implies
$$ \mathcal{M}^i|_{\Omega}\to \mathcal{M}|_{\Omega} \quad\text{as Brakke flows without boundary}.$$
Thus, we can directly apply [White09, Thm 4.2] to infer that $\operatorname{spt}(\partial [V_t])\cap \Omega=\emptyset$, implying
$$\operatorname{spt}(\partial [V_t])\subset S.$$
This concludes the proof of the theorem.
\end{proof}

To consider the properties preserved under reflection, we start with the following elementary lemma.
\begin{lemma}[reflection of smooth flows]\label{lem:reflectsmooth}
If $\mathcal{M}$ is a smooth free boundary mean curvature flow in the halfspace $\mathbb{R}^{n+1}_+$, then the reflected flow is a smooth mean curvature flow in $\mathbb{R}^{n+1}$. 
\end{lemma}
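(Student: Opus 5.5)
Away from $S=\{x_{n+1}=0\}$ nothing needs proof: the reflected flow $\tilde M_t = M_t\cup A(M_t)$, with $A(x',x_{n+1})=(x',-x_{n+1})$, consists of two disjoint copies of a smooth MCF. Here $A$ is an isometry, so $A(M_t)$ again moves by mean curvature. The only issue is smoothness across $S$. We fix a boundary point $(p,t_0)$ with $p\in M_{t_0}\cap S$ and work locally.

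Since $M_t$ meets $S$ orthogonally, near $(p,t_0)$ we can write $M_t$ as a graph over its tangent plane. After a rotation fixing $e_{n+1}$, take the tangent plane to be $P=\{x_1=0\}$, which contains $e_{n+1}$. Then
$$M_t=\{x_1=u(y,x_{n+1},t)\},\qquad y=(x_2,\dots,x_n),\ x_{n+1}\ge 0,$$
with $u$ smooth up to $\{x_{n+1}=0\}$. In this graph form:
\begin{itemize}
\item the MCF equation reads $u_t = \bigl(\delta_{ij}-\frac{u_iu_j}{1+|Du|^2}\bigr)u_{ij}$, where $i,j$ run over the variables $(y,x_{n+1})$;
\item orthogonality to $S$ is equivalent to $\nu_M\perp e_{n+1}$, i.e. the Neumann condition $\partial_{x_{n+1}}u=0$ on $\{x_{n+1}=0\}$.
\end{itemize}
The reflected surface is the graph of the even extension $\tilde u(y,x_{n+1},t)=u(y,|x_{n+1}|,t)$. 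The task is therefore to show that $\tilde u$ is smooth and solves the graphical MCF equation across $x_{n+1}=0$.

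The first step is to get $C^{2,1}$ regularity and a weak solution across the hyperplane. The Neumann condition makes $\tilde u$ a $C^1$ function. Moreover, the equation is invariant under $x_{n+1}\mapsto -x_{n+1}$: the coefficients depend on $u_iu_j$, and the mixed terms pick up sign changes that cancel. So $\tilde u$ solves the equation classically on both sides and is $C^1$ across. Since $\partial_{n+1}\tilde u$ is odd and continuous, the second derivatives $\partial_{n+1}\partial_{y}\tilde u$ match (both vanish at the interface), and $\partial^2_{n+1}\tilde u$ and $\partial_t\tilde u$ are even, hence continuous. Thus $\tilde u\in C^{2,1}$. (Alternatively, $\tilde u\in W^{2,1}_p$ is a strong solution.)

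The second step is to bootstrap. Regard $\tilde u$ as a solution of the linear equation
$$\partial_t v=a_{ij}(x,t)\,v_{ij},\qquad a_{ij}=\delta_{ij}-\frac{\tilde u_i\tilde u_j}{1+|D\tilde u|^2},$$
with coefficients continuous in fact $C^{\alpha}$ since $\tilde u\in C^{2,1}$. Interior parabolic Schauder theory gives $\tilde u\in C^{2+\alpha}$, hence $a_{ij}\in C^{1+\alpha}$, and iterating yields smoothness.

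The main obstacle is ensuring the regularity of $\tilde u$ up to the interface is genuine rather than a matching of one-sided derivatives only at low order. If one prefers to avoid that subtlety, the alternative is a higher-order parity argument. Differentiating the boundary condition and the equation shows that all odd $x_{n+1}$-derivatives of $u$ vanish on $\{x_{n+1}=0\}$: for instance, applying $\partial_{n+1}$ to the equation restricted to the boundary, inductively. This is exactly the condition for the even extension of a function smooth up to the boundary to be $C^\infty$. One also needs $u$ smooth up to the boundary, which follows from the assumption that the free boundary flow is smooth.
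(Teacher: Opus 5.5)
Your proposal is correct and follows essentially the same route as the paper. Near a boundary point, write the flow as a graph solving the graphical MCF equation with the Neumann condition, and take the even extension. Then check that it is $C^2$ across the hyperplane: tangential second derivatives match, the mixed ones vanish there, and the pure normal one is continuous because the normal derivative is odd. So the equation holds across $S$, and smoothness follows by a Schauder bootstrap; your explicit reflection-invariance check and the alternative argument via vanishing odd normal derivatives are fine but not needed.
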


\begin{proof}[Proof of Lemma \ref{lem:reflectsmooth}]
By definition, we only need to show that the reflected flow satisfies the PDE of MCF. The points away from the hyperplane $S=\partial \mathbb{R}^{n+1}_+$ automatically satisfy the equation by reflection.  We thus only consider the spacetime point $X_0=(x_0,t_0)$, where $x_0\in S$. Since the flow is orthogonal to $S$, the MCF is graphical in some mirror symmetric neighborhood $\UU$ of $X_0$. Therefore, 
we can locally express the MCF by a graph $(x,u(x))$ satisfying the following PDE:
\begin{equation}\label{equ:graphmcf}
\begin{cases}
\partial_t u=\sqrt{1+|D u|^2}\operatorname{div}\left( \dfrac{D u}{\sqrt{1+|D u|^2}} \right),
\quad &\text{in }\{x_1\ge 0\}\cap \UU, \\
u_{x_1}=0, \quad &\text{on }\{x_1=0\} \cap \UU.
\end{cases} 
\end{equation}
And we define the reflected solution 
\begin{equation}\label{equ:reflectu}
\tilde{u}(x)=\begin{cases}u(x),\quad x_1\ge 0 \\ u(\tilde{x}),\quad x_1< 0 \end{cases} 
\end{equation}
where $\tilde{x}$ is the reflection on the first coordinate.
To investigate the a priori regularity, we first observe that $\tilde{u}$ depends smoothly on $t$, since there is no transform on the time. Moreover, $\tilde{u}_{k}\ (2\le k\le n+1)$ are smooth away from the boundary $S:=\{x_1=0\}$. And $\tilde{u}$ must entirely be $C^1$ since $\tilde{u}_{x_1}=0$ on the boundary. 
We then aim to show that the second partial derivatives $\tilde{u}_{x_ix_j}$ exist and are continuous.
For $2\le i\le j\le n+1$, we have
$$
\lim_{x_1\to0^{+}}u_{x_ix_j}(x)=u_{x_ix_j}(0,x_2,\cdots)=\lim_{x_1\to0^{-}}u_{x_ix_j}(\tilde{x}).
$$
For $i=1$, $j\ge 2$, we always have $\tilde{u}_{x_1x_j}\equiv 0$. So, we only need to check for $\tilde{u}_{x_1x_1}$. 
Using the definition of $\tilde{u}$, we see that $s\mapsto \tilde{u}_{x_1}(s,x_2,\cdots,x_{n+1})$ is an odd function. Thus, $$\tilde{u}_{x_1x_1}(0^{-})=\lim_{s\to 0^{+}}\frac{\tilde{u}_{x_1}(-s,x_2,\cdots, x_{n+1})}{-s}=\tilde{u}_{x_1x_1}(0^+),$$
which implies $\tilde{u}\in C^2$. Thereby, the continuity of the mean curvature $H$ and the normal velocity $(\partial_t x)^{\perp}$ guarantees the MCF equation satisfied on $S$ and thus on the whole space. So, $\tilde{u}$ is a classical solution of (\ref{equ:graphmcf}) and the coefficients are $C^1$. Finally, applying Schauder estimates, we obtain the smoothness of $\tilde{u}$. 
\end{proof}

\begin{remark}\label{rmk:localref}
From the proof, we actually obtain a local property that if $\mathcal{M}$ is a smooth free boundary mean curvature flow in $\UU$ of a hyperplanar boundary $S$, then the reflected flow is a smooth mean curvature flow in $\tilde{\UU}$.
\end{remark}


Now let us prove that unit-regular, cyclic free boundary integral Brakke flows are preserved under reflection, which has already been stated in Theorem \ref{thm:reflectprop}. 
We state it again for convenience of the readers.
\begin{theorem*}[preservation under reflection]
Given an integral, unit-regular, cyclic free boundary Brakke flow in the halfspace $\mathbb{R}^{n+1}_+$, the reflected flow is also an integral, unit-regular, cyclic flow (without boundary) in $\mathbb{R}^{n+1}$.
\end{theorem*}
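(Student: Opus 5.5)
We treat the three properties separately, writing $A$ for the reflection across $S=\partial\RR^{n+1}_+$ and $\tilde\mu_t=\mu_t+A_\#\mu_t$ (with $\tilde V_t=V_t+A_\# V_t$) for the reflected flow. That $\tilde{\MM}=\{\tilde\mu_t\}$ is a Brakke flow without boundary is Edelen's observation \cite{edelen18}, already used in Theorem \ref{thm:ede6.4}. We recall why: for a test function $\phi$ on $\RR^{n+1}$, split it as $\phi=\phi_s+\phi_a$ into parts even and odd under $A$. The odd part integrates to zero against $\tilde\mu_t$. The even part restricts to an admissible test function in $\mathcal{BT}(S,[a,b])$, since $D\phi_s$ is tangential on $S$. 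Moreover the singular boundary measures $\sigma_{V_t}$ of $V_t$ and $A_\#V_t$ cancel, because their conormals $\nu_S$ and $-\nu_S$ are opposite. Hence $\tilde V_t$ has generalized mean curvature given by $S^TH$ reflected, which is in $L^2_{loc}$. Integrality is clear, since $\tilde V_t$ is a sum of integral varifolds; on $S$ it has even density, but that is allowed.

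\textbf{Unit-regularity.} Let $X=(x,t)$ with $\Theta(\tilde{\MM},X)=1$. If $x\notin S$, then near $X$ the flow $\tilde{\MM}$ coincides with $\MM$ or with $A\MM$, and $\Theta_{\mathrm{refl}}(\MM,X)=\Theta(\tilde\MM,X)=1$ (or the same at $AX$). Unit-regularity of $\MM$ then gives smoothness. If $x\in S$, then in the half-space case the reflected truncated heat kernel $f_{S,\kappa,X}$ integrated against $\mu_t$ equals the ordinary truncated Gaussian integrated against $\tilde\mu_t$. Thus $\Theta_{\mathrm{refl}}(\MM,X)=\Theta(\tilde{\MM},X)=1$, consistent with Theorem \ref{thm:ede6.4}. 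By unit-regularity of $\MM$, in a mirror-symmetric neighborhood the support $\spt\mu_{t'}$ is a smooth free boundary flow. Lemma \ref{lem:reflectsmooth}, in the local form of Remark \ref{rmk:localref}, then shows the reflected support is a smooth MCF near $X$. (A smooth free boundary flow through a point of $S$ meets $S$ orthogonally, so the graphical setup of that lemma applies.)

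\textbf{Cyclicity}, which is the main obstacle, amounts to proving $\partial[\tilde V]=\partial[V]+A_\#\partial[V]$ for a free boundary integral varifold $V$ with $\partial[V]$ relatively cyclic. Given this identity, $\spt\partial[V]\subset S$ and $A|_S=\mathrm{id}$ give $A_\#\partial[V]=\partial[V]$, so $\partial[\tilde V]=2\partial[V]=0$ mod 2. Since $\partial$ and $A_\#$ commute for flat chains, the identity reduces to showing that the mod-2 reduction is additive here, i.e. $[V+A_\#V]=[V]+A_\#[V]$.

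The reduction map is additive whenever the two varifolds can be represented as rectifiable sets with multiplicities, which is the case for integral varifolds. The subtle part is the portion of $V$ lying inside $S$. There $A_\#V$ doubles it, and $[\tilde V]$ counts it with multiplicity $2\theta\equiv0$, which agrees with $[V]+A_\#[V]=2[V\llcorner S]$ mod 2. The result is consistent either way.

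To be safe, we would follow the strategy suggested in the paper. First verify the identity for polyhedral chains, where it is a direct computation on simplices, splitting off those lying in $S$. Then pass to general rectifiable mod-2 chains by approximating $[V]$ in flat norm with polyhedral chains $P_k$, replacing $P_k$ by a nearby chain symmetric under $A$ where necessary. Finally use continuity of $\partial$, of $A_\#$, and of addition in the flat topology. The delicate points are to ensure that the flat approximation respects the mass in $S$ and that mod-2 reduction commutes with the limit. We would handle both via the rectifiable-set description of integral varifolds, where $[V]$ is the mod-2 rectifiable chain with density $\theta$ mod 2.

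Applying the identity for a.e.\ $t$ shows that $\tilde{\MM}$ is cyclic in the sense of \cite{whi09currents}.
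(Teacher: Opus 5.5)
Your proposal is correct and follows essentially the paper's route. It cites Edelen for the integral Brakke structure of the doubled flow. It uses $\Theta_{\mathrm{refl}}(\MM,X)=\Theta(\tilde{\MM},X)$ together with Lemma \ref{lem:reflectsmooth} for unit-regularity. For cyclicity it proves $\partial[\tilde V]=\partial[V]+A_\#\partial[V]$ and then uses $A_\#\partial[V]=\partial[V]$, since $\spt{\partial[V]}\subset S$. The only difference is emphasis. The paper takes $[\tilde V]=[V]+A_\#[V]$ from White and spends its polyhedral approximation on $\partial A_\#=A_\#\partial$. You take that commutation as standard and obtain additivity directly from the density characterization of $[\cdot]$, so your ``to be safe'' polyhedral detour is unnecessary.
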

\begin{proof}
We denote the reflection across the hyperplane $S:=\partial \mathbb{R}^{n+1}_+$ by $A: \mathbb{R}^{n+1} \rightarrow \mathbb{R}^{n+1}$. Then the doubled varifold is expressed as $\tilde{V}=V+A_{\sharp} V$, where the pushforward of a varifold is defined as pushforward of the measure.
By \cite[Prop 3.1]{edelen18},  the doubled varifold is an integral varifold without boundary while by \cite[Prop 4.6]{edelen18}, the doubled flow is an integral Brakke flow without boundary. 

Noting that unit-regularity is a local property, 
we only need to consider points $X_0=(x_0,t_0)$ with $x_0\in S$. Because
$$\Theta_\mathrm{refl}(\mathcal{M},X_0)=\Theta(\tilde{\mathcal{M}},X_0)$$
for any such $X_0$ with $\Theta(\tilde{\mathcal{M}},X_0)=1$, there will be (in the original  flow setting) a small open neighborhood such that $t^{\prime} \mapsto \mathcal{M}_{t'} \cap B(x_0, \varepsilon)$ is a smooth free boundary mean curvature flow for $t^{\prime} \in\left[t_0-\varepsilon^2, t_0+\varepsilon^2\right]$. 
Then applying Lemma \ref{lem:reflectsmooth} and Remark \ref{rmk:localref}, we obtain the smooth mean curvature flow without boundary for $t^{\prime} \in\left[t_0-\varepsilon^2, t_0+\varepsilon^2\right]$, implying unit-regularity.\\

Finally, since cyclicity is an elliptic property, we only need to prove preservation under reflection for varifolds, which will automatically imply the flow case. 

Recall from \cite{whi09currents}, for any integral varifold $V$, the associated mod $2$ flat chain $[V]$ is the unique rectifiable mod $2$ flat chain, such that
$$\Theta([V],x)\equiv \Theta(V,x)\mod 2\ \text{ for }\mathcal{H}^n\text{ a.e. }x.$$
Since \cite[Sec 2.3]{whi09currents} implies that $A_{\#}$ commutes with $[\cdot]$, we have 
$$[\tilde{V}]=[V]+[A_{\#}V]=[V]+A_{\#}[V].$$
Taking $\partial$ on either side and we want to show the commutativity of $\partial$ with $A_{\#}$. 
To this end, recall that by the theory from \cite{fleming66}, any mod $2$ flat chain is the limit of some polyhedral chains, namely, $Q=\lim_{i\to \infty}P_i,$ where the limit is taken in the flat metirc. And the boundary of flat chains can be defined via $\partial Q:=\lim\limits_{i\to\infty}\partial P_i$, which is well-defined by the estimates of flat metric norm as in \cite[Sec 3]{fleming66}.
Moreover, by [Fle66, Sec 5], for any Lipschitz map $f$ one has $f_{\#}Q=\lim_{i\to\infty}f_{\#}P_i$. 
To show
$\partial A_{\#}=A_{\#}\partial$, it thus suffices to check for polyhedral chains that 
$$\partial A_{\#}P_i= A_{\#}\partial P_i,$$
which indeed holds by the linearity of $A$ in our special setting.
Moreover, by the continuity of $A_{\#}$, which is induced from a linear map, we get
\begin{equation}\label{equ:2chainonp}
\partial[\tilde{V}]=\partial[V]+A_{\#}\partial[V].
\end{equation}
Since $V$ is cyclic by assumption, we have $\spt{\partial [V]}\subset S$. Note that $A|_{S}=Id$ as a Lipschitz map, and thus $A_{\#}|_{S}=Id$ as mod 2 flat chain map by the perspective of polyhedral chains.
Thereby, we finally obtain 
$$\partial[\tilde{V}]=2\partial[V]=0.$$
This completes the proof.
\end{proof}

\begin{remark}
In fact the proof above shows that each property is inherited from the property itself instead of a combination of them.
\end{remark}

\section{Avoidance principle for free boundary Brakke flows}\label{sec:avoidance}
In this section, we are going to prove Theorem \ref{thm:0elliptic10.5} that the support of any free boundary Brakke flow satisfies the free boundary avoidance principle.

\subsection{Extending the class of test functions}
In this subsection, we are going to extend the test function class of Definition \ref{def:fbbrakke}.
We start with the following approximation lemma.
\begin{lemma}[existence of approximation]\label{lem:approxphi}
Given a nonnegative function $\phi\in C_c^1(\RR^{n+1}\times [a,b])$, there exists a nonnegative function $\phi_{\varepsilon}\in \mathcal{B T}(S, [a,b])$ such that 
\begin{equation}\label{equ:approxphi}
\begin{split}
|\phi-\phi_{\varepsilon}|& \le C\varepsilon,\\
|D\phi-D\phi_{\varepsilon}|&\le C,\\
|\partial_t\phi-\partial_t\phi_{\varepsilon}|&\le f_{\phi}(\varepsilon).
\end{split}
\end{equation}
The constants $C$ does not depend on $\varepsilon$, and $f_{\phi}(\varepsilon)\to 0$ when $\varepsilon\to 0$.
\end{lemma}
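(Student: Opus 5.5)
We construct $\phi_\varepsilon$ by modifying $\phi$ only in a thin collar $\{x:\ d_S(x)<\varepsilon\}$ around $S=\partial\Omega$, where $d_S$ denotes the signed distance to $S$. We kill the normal derivative there and leave $\phi$ unchanged elsewhere. Since $\phi$ has compact support and $S$ is smooth, there is a $\delta_0>0$ such that on $\operatorname{spt}\phi$ the tubular neighbourhood $\{|d_S|<\delta_0\}$ has a smooth nearest point projection $\zeta$ and $d_S$ is smooth. Fix a smooth cutoff $\eta:\RR\to[0,1]$ with $\eta(s)=0$ for $|s|\le 1$, $\eta(s)=1$ for $|s|\ge 2$ and $|\eta'|\le 2$, and set $\eta_\varepsilon(x)=\eta(d_S(x)/\varepsilon)$.

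The candidate is
$$\phi_\varepsilon(x,t)=\eta_\varepsilon(x)\,\phi(x,t)+(1-\eta_\varepsilon(x))\,\phi(\zeta(x),t),$$
for $\varepsilon<\delta_0/2$. This is a convex combination of nonnegative $C^1$ functions, so it is nonnegative and $C^1$ with compact support. Near $S$, that is for $|d_S|<\varepsilon$, it equals $\phi(\zeta(x),t)$, which is constant along the normal lines to $S$. Hence $D\phi_\varepsilon\cdot\nu_S=0$ on $S$, i.e.\ $D\phi_\varepsilon(\cdot,t)\in\mathcal{T}(S)$, so $\phi_\varepsilon\in\mathcal{BT}(S,[a,b])$.

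The estimates follow from direct computation.

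For the first estimate, we have $\phi-\phi_\varepsilon=(1-\eta_\varepsilon)(\phi(x,t)-\phi(\zeta(x),t))$. This vanishes outside $\{|d_S|<2\varepsilon\}$, where $|x-\zeta(x)|<2\varepsilon$, so $|\phi-\phi_\varepsilon|\le 2\varepsilon\|D\phi\|_\infty$.

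For the gradient, differentiating gives three terms:
$$D\phi-D\phi_\varepsilon=-D\eta_\varepsilon\,(\phi(\zeta(x),t)-\phi(x,t))+(1-\eta_\varepsilon)\bigl(D\phi(x,t)-D\zeta(x)^T D\phi(\zeta(x),t)\bigr).$$
In the first term, $|D\eta_\varepsilon|\le 2/\varepsilon$ is compensated by the $O(\varepsilon)$ difference, giving a bound $4\|D\phi\|_\infty$. The second term is bounded by $(1+\sup|D\zeta|)\|D\phi\|_\infty$. Together these give a constant $C$ independent of $\varepsilon$. The estimate cannot be improved to $o(1)$, because the normal derivative is genuinely destroyed near $S$. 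This is why the lemma only asks for a bounded error there.

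For the time derivative, we have $\partial_t\phi-\partial_t\phi_\varepsilon=(1-\eta_\varepsilon)(\partial_t\phi(x,t)-\partial_t\phi(\zeta(x),t))$. This is bounded by the modulus of continuity $\omega$ of $\partial_t\phi$, evaluated at $2\varepsilon$. Since $\partial_t\phi$ is continuous on a compact set it is uniformly continuous, so $f_\phi(\varepsilon):=\omega(2\varepsilon)\to0$.

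The only delicate point is the first-order term $D\eta_\varepsilon\cdot(\ldots)$ in the gradient. It must be balanced against the Lipschitz bound $|\phi(x)-\phi(\zeta(x))|\le\|D\phi\|_\infty|d_S(x)|$, which is exactly what yields a uniform rather than blowing-up constant. We must also ensure that the collar stays inside the region where $\zeta$ is smooth, which holds for small $\varepsilon$ by compactness of $\operatorname{spt}\phi$. If $\phi$ were only $C^1$ in $t$ jointly, nothing changes, since no time regularity beyond continuity of $\partial_t\phi$ is used.
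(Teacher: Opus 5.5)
Your proof is correct and follows essentially the paper's construction. You modify $\phi$ in an $O(\varepsilon)$ collar around $S$ so that it is constant along normal lines, and the $O(1/\varepsilon)$ cutoff gradient is compensated by the $O(\varepsilon)$ Lipschitz difference. The difference is cosmetic: the paper works in Fermi coordinates, freezes the value at $\phi(\varepsilon/2,y)$ and uses a piecewise bump interpolation. Your single convex-combination formula with $\phi\circ\zeta$ makes nonnegativity and $C^1$ regularity immediate.
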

\begin{proof}
We first fix a compact set $K$ such that $\spt{\phi(\cdot,t)}\subset K$ for all $t$.
Take a two-sided tubular neighborhood $$U_{\varepsilon}=\{x\in \RR^{n+1}|\dist{x,S\cap K}<\varepsilon\text{ and is achieved only at }S\cap K\}$$
of part of the boundary $S$, where $\varepsilon$ is taken so small that there is a well-defined Fermi coordinates $x=(r,y)$
 such that the signed distance from $x$ to $S$ equals $r$ and is achieved only by $y\in S$. Recall the standard mollifier
$$\eta(x)=\begin{cases}e\exp\left(\dfrac{1}{\left|x\right|^2-1}\right),&|x|<1,\\0,&|x|\geq 1.\end{cases}$$
Construct the interpolating approximation 
\begin{equation}\label{equ:phiepsilon}
\phi_{\varepsilon}(r,y)=
\begin{cases}
\phi(r,y) (1-\eta(\frac{2r-\varepsilon}{\varepsilon}))+\eta(\frac{2r-\varepsilon}{\varepsilon})\phi(\frac{\varepsilon}{2},y),\quad \frac{\varepsilon}{2}\le r\le \varepsilon, \\
\phi(\frac{\varepsilon}{2},y),\quad -\frac{\varepsilon}{2} \le r\le \frac{\varepsilon}{2},\\
\phi(r,y) (1-\eta(\frac{2r+\varepsilon}{\varepsilon}))+\eta(\frac{2r+\varepsilon}{\varepsilon})\phi(\frac{\varepsilon}{2},y),\quad -\varepsilon\le r\le -\frac{\varepsilon}{2}, \\
\end{cases}
\end{equation}
and simply define $\phi_{\varepsilon}(x)=\phi(x)$ for $x\in \Omega\backslash U_{\varepsilon}$. Here and later we omit the variable $t$ for ease of notation. Observing when $|r|\ge \varepsilon$, then $\phi_{\varepsilon}(r,y)=\phi(r,y)$ and checking the coincidence up to first derivative at points of $|r|=\frac{\varepsilon}{2}$,
we have $C_c^1(\RR^{n+1}\times [a,b])$. Observing also that $D_{\nu_S}\phi_{\varepsilon}=0$ by construction, we thus get $\phi_{\varepsilon}\in \mathcal{B T}(S, [a,b])$. Note also that $\phi_{\varepsilon}\ge 0$ by construction.

 Since $\phi_{\varepsilon}$ and $\phi$ only differ in $U_{\varepsilon}$, we only need to check (\ref{equ:approxphi}) in $U_{\varepsilon}$. 
Since $\phi\in C^1$ and $S\cap K$ is compact,  we have 
\begin{equation}\label{equ:approxcondi1}
\sup_{U_{\varepsilon}\times [a,b]}|D\phi| \le C_1
\end{equation}
for some constant $C_1$ depending only on $\phi$. Moreover,
\begin{equation}\label{equ:approxcondi2}
\sup_{\frac{\varepsilon}{2}\le \pm r\le \varepsilon}\left|\partial_r \eta \left(\frac{2r\mp \varepsilon}{\varepsilon}\right)\right| \le \frac{C_2}{\varepsilon},
\end{equation}
for some constant $C_2$ depending only on $\phi$. Calculating via formula (\ref{equ:phiepsilon}), one can check that
\begin{equation}\label{equ:approxcheck1}
\sup_{{U_\varepsilon}\times[a,b]}\left|\phi(r,y)-\phi_{\varepsilon}(r,y)\right|\le C_1\cdot \varepsilon,
\end{equation}
and 
\begin{equation}\label{equ:approxcheck2}
\sup_{{U_\varepsilon}\times[a,b]}\left|\partial_r\phi(r,y)-\partial_r \phi_{\varepsilon}(r,y) \right|
\le C_1+C_1\cdot C_2,
\end{equation}
and 
\begin{equation}\label{equ:approxcheck2b}
\sup_{{U_\varepsilon}\times[a,b]}\left|\partial_{y_i}\phi(r,y)-\partial_{y_i} \phi_{\varepsilon}(r,y) \right|
\le 2C_1.
\end{equation}
Indeed, to verify (\ref{equ:approxcheck2}), we distinguish two cases.
If $|r|\le \frac{\varepsilon}{2}$, then $\partial_r \phi_{\varepsilon}=0$, and thus
$$\left|\partial_r\phi(r,y)-\partial_r \phi_{\varepsilon}(r,y) \right| =|\partial_r \phi(r,y)|
\le C_1,$$ 
and if $\frac{\varepsilon}{2}\le |r|\le \varepsilon$, then
$$\begin{aligned}
\left|\partial_r\phi(r,y)-\partial_r \phi_{\varepsilon}(r,y) \right| 
=& \left|\eta \partial_r\phi(r,y)+\partial_r \eta\left(\frac{2r\mp\varepsilon}{\varepsilon}\right) \left(\phi(r,y) -\phi(\frac{\varepsilon}{2},y)\right)\right| \\ 
\le& C_1+\frac{C_2}{\varepsilon}\cdot C_1\varepsilon.
\end{aligned}$$
Similarly, to verify (\ref{equ:approxcheck2b}), 
if $|r|\le \frac{\varepsilon}{2}$, then by (\ref{equ:approxcondi1}) we have
$$\left|\partial_{y_i}\phi(r,y)-\partial_{y_i}\phi_{\varepsilon}(r,y) \right| 
\le 2\sup_{U_{\varepsilon}\times [a,b]}|D\phi|\le 2C_1,$$ 
and if $\frac{\varepsilon}{2}\le |r|\le \varepsilon$, then
$$\left|\partial_{y_i}\phi(r,y)-\partial_{y_i}\phi_{\varepsilon}(r,y) \right| = \left|\eta \left(\phi_{y_i}(r,y) -\phi_{y_i}(\frac{\varepsilon}{2},y)\right)\right| 
\le 2C_1.
$$
Now (\ref{equ:approxcheck1}), (\ref{equ:approxcheck2}) and (\ref{equ:approxcheck2b}) together complete the first two claims of (\ref{equ:approxphi}) in the lemma.
Finally, to verify the last claim of (\ref{equ:approxphi}), by compactness of $\overline{U_{\varepsilon}}\times [a,b]$, the function $\partial_t \phi(r,y)$ is uniformly continous, hence
$$
\left|\partial_t\phi(r,y)-\partial_t\phi(\frac{\varepsilon}{2},y)\right|\le f_{\phi}(\varepsilon),\quad |r| \le \frac{\varepsilon}{2},
$$
where $f_{\phi}(\varepsilon)\to 0,$ when $\varepsilon\to 0$. And this also, by similar computation, leads to 
$$\begin{aligned}
\left|\partial_t\phi(r,y)-\partial_t \phi_{\varepsilon}(r,y) \right|
\le f_{\phi}(\varepsilon),\quad \frac{\varepsilon}{2} \le |r|\le \varepsilon,
\end{aligned}$$
which completes the proof of the lemma.
\end{proof}
Now we apply Lemma \ref{lem:approxphi} to prove the following proposition.
\begin{proposition}[extended test function class]\label{prop:c1class}
Given $\{\mu_t\}_{t \in I}$ a free boundary integral Brakke flow in $\Omega$, for any nonnegative $\phi\in C_c^1(\RR^{n+1}\times [a,b])$, we have 
\begin{multline*}
\int \phi(\cdot, b) d \mu_b-\int \phi(\cdot, a) d \mu_a \\ \leq \int_a^b \int \left(-\left|S^T(H)\right|^2 \phi+S^T(H) \cdot D \phi+\partial_t \phi \right) d \mu_t d t.
\end{multline*}
\end{proposition}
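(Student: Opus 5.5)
We apply Definition \ref{def:fbbrakke}(2) to the approximations $\phi_\varepsilon\in\mathcal{BT}(S,[a,b])$ from Lemma \ref{lem:approxphi} and then let $\varepsilon\to 0$ in each term. The estimates in Lemma \ref{lem:approxphi} control every term except $S^T(H)\cdot D\phi_\varepsilon$: there $D\phi_\varepsilon$ differs from $D\phi$ by an amount that is only bounded, not small. The key point is that this difference is supported in the shrinking tubular neighborhood $U_\varepsilon$ of $S\cap K$, so it contributes roughly $C\int_a^b\int_{U_\varepsilon}|S^T(H)|\,d\mu_t\,dt$, and we must show this tends to zero.

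\textbf{Step 1 (easy terms).} For each $\varepsilon$ we have
\begin{multline*}
\int\phi_\varepsilon(\cdot,b)\,d\mu_b-\int\phi_\varepsilon(\cdot,a)\,d\mu_a\\
\le\int_a^b\int\left(-|S^T(H)|^2\phi_\varepsilon+S^T(H)\cdot D\phi_\varepsilon+\partial_t\phi_\varepsilon\right)d\mu_t\,dt.
\end{multline*}
Since $|\phi-\phi_\varepsilon|\le C\varepsilon$ uniformly and $\mu_t(K)$ is bounded for $t\in[a,b]$ (a Brakke flow has locally bounded mass on compact time intervals), the left side converges to the corresponding expression with $\phi$. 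Likewise $\int_a^b\int|\partial_t\phi-\partial_t\phi_\varepsilon|\,d\mu_t\,dt\le f_\phi(\varepsilon)\int_a^b\mu_t(K)\,dt\to0$.

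\textbf{Step 2 (curvature term).} Rather than pass to the limit, we only need an inequality. We use the bound $\phi_\varepsilon\ge\phi-C\varepsilon$ and split
\[
-\int|S^T(H)|^2\phi_\varepsilon\le-\int|S^T(H)|^2\phi+\int_{U_\varepsilon}|S^T(H)|^2|\phi-\phi_\varepsilon|.
\]
Before this we must check that $\int_a^b\int_K|S^T(H)|^2\,d\mu_t\,dt<\infty$. This follows in the standard way by inserting a fixed cutoff $\psi\in\mathcal{BT}(S,[a,b])$ with $\psi\equiv1$ on $K$ into (\ref{equ:ede12}) and absorbing $S^T(H)\cdot D\psi\le\frac12|S^T(H)|^2\psi+\frac{|D\psi|^2}{2\psi}$. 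For this to work, $\psi$ should be chosen with $|D\psi|^2/\psi$ bounded, for instance the square of a cutoff. Given this finiteness, dominated convergence shows that $\int_a^b\int_{U_\varepsilon}|S^T(H)|^2\,d\mu_t\,dt\to\int_a^b\int_{S\cap K}|S^T(H)|^2\,d\mu_t\,dt$, which is finite, so the error term, being at most $C\varepsilon$ times this, tends to $0$.

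\textbf{Step 3 (main obstacle).} We have
\[
\int_a^b\int|S^T(H)|\,|D\phi-D\phi_\varepsilon|\,d\mu_t\,dt\le C\int_a^b\int_{U_\varepsilon}|S^T(H)|\,d\mu_t\,dt\le C\Big(\int_a^b\!\!\int_K|S^T(H)|^2\Big)^{1/2}\Big(\int_a^b\mu_t(U_\varepsilon)\,dt\Big)^{1/2}.
\]
By dominated convergence, $\int_a^b\mu_t(U_\varepsilon)\,dt\to\int_a^b\mu_t(S\cap K)\,dt$. This limit need not vanish, since $\mu_t$ may charge $S$ (pieces of the varifold lying inside $S$), and at this point I expect the real difficulty. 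I would first try to handle it by a finer look at $D\phi-D\phi_\varepsilon$ on $S$. For $|r|\le\varepsilon/2$ the tangential part $\partial_y\phi_\varepsilon(r,y)=\partial_y\phi(\varepsilon/2,y)$ converges uniformly to $\partial_y\phi(0,y)$, and only the normal component fails to converge, where $\partial_r\phi_\varepsilon=0$ while $\partial_r\phi\ne0$. So on $S$ the discrepancy is $(\partial_r\phi)\,S^T(H)\cdot\nu_S$. The definition $S^T(H)=H-1_S(H\cdot\nu_S)\nu_S$ makes $S^T(H)$ tangential to $S$ at points of $S$, so this product vanishes $\mu_t$-a.e. on $S$. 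On $U_\varepsilon\setminus S$ the discrepancy is bounded and the set shrinks to $\emptyset$, so the contribution there tends to $0$ by dominated convergence, using the $L^2$ bound of Step 2. Hence the error term pointwise converges to $0$ $\mu_t\,dt$-a.e. (including on $S$), is dominated by $C|S^T(H)|1_K\in L^1$, and vanishes in the limit. Combining Steps 1--3 and letting $\varepsilon\to0$ gives the claimed inequality.
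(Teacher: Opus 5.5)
Your proof is correct. At the one delicate step it argues differently from the paper.

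Both arguments apply (\ref{equ:ede12}) to $\phi_\varepsilon$ and reduce to showing
\[
\int_a^b\int S^T(H)\cdot(D\phi-D\phi_\varepsilon)\,d\mu_t\,dt\to0 .
\]
The paper bounds this term by $C\int_a^b\int\chi_{U_\varepsilon}|S^T(H)|\,d\mu_t\,dt$ and then uses dominated convergence. For that it asserts that $S^T(H)=0$ $\mu_t$-a.e.\ on $S$ ``by definition''. Strictly, the definition only makes $S^T(H)$ tangent to $S$ on $S$. Vanishing there additionally needs Brakke's perpendicularity theorem ($H\perp T_xV_t$ a.e.), together with the a.e.\ coincidence of the approximate tangent planes of $V_t$ and $S$ on $V_t\cap S$.

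You instead keep the full integrand $S^T(H)\cdot(D\phi-D\phi_\varepsilon)$ and argue as follows:
\begin{enumerate}[(a)]
\item On $S$ only the tangential part of $D\phi-D\phi_\varepsilon$ is seen by $S^T(H)$, and it tends to zero uniformly, because $\phi_\varepsilon$ is frozen at $r=\varepsilon/2$ in the slab $|r|\le\varepsilon/2$.
\item Off $S$ the integrand vanishes once $\varepsilon<|r|$.
\item The integrand is dominated by $C|S^T(H)|\chi_K\in L^1$.
\end{enumerate}
This needs only the tangentiality of $S^T(H)$ along $S$, so it is self-contained and avoids any regularity theory for integral varifolds. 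The price is that you use the explicit construction of $\phi_\varepsilon$ from the proof of Lemma \ref{lem:approxphi}, not just the three estimates stated there.

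Two minor points. The paper simply cites \cite[Prop 4.13]{edelen18} for the mass and $L^2$ bounds that you rederive in Step 2. Also, your detour in Step 2 through $\int\int_{S\cap K}|S^T(H)|^2$ is unnecessary: $|\phi-\phi_\varepsilon|\le C\varepsilon$ together with the $L^2$ bound on a fixed compact set containing all $U_\varepsilon$ already gives an $O(\varepsilon)$ error.
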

\begin{proof}
By Lemma \ref{lem:approxphi}, there is a nonnegative function $\phi_{\varepsilon}\in \mathcal{B T}(S, [a,b])$ such that the estimates (\ref{equ:approxphi}) hold. Then by Definition \ref{def:fbbrakke} of free boundary Brakke flow, we have 
\begin{equation}\label{equ:epsiloninequ}
\left.\int\phi_{\varepsilon} d\mu_t\right|_{t=b}-\left.\int\phi_{\varepsilon} d\mu_t\right|_{t=a}\le \int\int\left(-\left|S^T(H)\right|^2 \phi_{\varepsilon}+S^T(H)\cdot D\phi_{\varepsilon}+\partial_t\phi_{\varepsilon}\right).
\end{equation}
By \cite[Prop 4.13]{edelen18}, for our compact region $K$ from above we have
$$
\sup_{t\in[a,b]}\mu_t(K)+
\int_a^b \int_K\left|S^T(H)\right|^2 d \mu_t d t \leq C (K).
$$
Hence, we can apply Lemma \ref{lem:approxphi} to obtain the estimates
$$\left|\int\phi_{\varepsilon}d\mu_{t_0}-\int\phi d\mu_{t_0}\right|\le C\varepsilon,$$
and
$$\left|\int\int \left|S^T(H)\right|^2 \phi_{\varepsilon}d\mu_t dt-\int\int \left|S^T(H)\right|^2\phi d\mu_t dt\right|\le C\varepsilon.$$
Next, recall from the proof of Lemma \ref{lem:approxphi} that we have 
$|D\phi-D\phi_{\varepsilon}|\le C$ and $D\phi=D\phi_{\varepsilon}$ outside $U_{\varepsilon}$. This yields
\begin{multline*}
 \left|\int\int S^T(H)\cdot D\phi_{\varepsilon}d\mu_t dt-\int\int S^T(H) \cdot D\phi d\mu_t dt\right|\\
\le C\int_a^b \int_{\RR^{n+1}}\chi_{U_{\varepsilon}}(x)\cdot |S^T(H)| d\mu_tdt.
\end{multline*}
Now by definition of $S^T$, we have $S^TH(X)=0$ for $\mu_t-$a.e. $x\in S$, so applying the dominated convergence theorem we infer that
$$\begin{aligned}
& \lim\limits_{\varepsilon\to 0}\int_a^b \int_{\RR^{n+1}}\chi_{U_{\varepsilon}}(x)\cdot |S^T(H)| d\mu_tdt =0.
\end{aligned}$$
Finally, applying Lemma \ref{lem:approxphi} again we see that
$$\left|\int\int \partial_t\phi_{\varepsilon}d\mu_t dt-\int\int \partial_t\phi d\mu_t dt\right|\le C f_{\phi}(\varepsilon).$$
Thus, we have shown that the differences of terms in (\ref{equ:epsiloninequ}) all tend to $0$ when letting $\varepsilon\to 0.$
Therefore, we can pass to the limit in (\ref{equ:epsiloninequ}) when $\varepsilon \to 0$ to obtain the assertion of Proposition \ref{prop:c1class}.
\end{proof}

\subsection{Proof of the avoidance principle}
Before going further the proof of Theorem \ref{thm:0elliptic10.5}, let us first see the convenience that convexity of $\Omega$ provides us.
\begin{lemma}[minimal distance reached orthogonally]\label{lem:convexinner}
Assume $N$ is a free boundary smooth surface in a convex region $\Omega$. Then for any $x\in \bar{\Omega}$, if $\dist{x,N}$ is achieved at some point $x_0$ (not necessarily unique) of $N$, then the straight segment
$\overline{x_0x}$ meets $N$ orthogonally and $\overline{x_0x}\subset \bar{\Omega}$.
\end{lemma}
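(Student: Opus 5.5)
The plan is to handle the two assertions separately, starting with the easier one: the segment lies in $\bar{\Omega}$ purely by convexity. Since $N\subset\bar{\Omega}$, we have $x_0\in\bar{\Omega}$, and $x\in\bar{\Omega}$ by assumption. The closure of a convex open set is convex, so the closed segment $\overline{x_0x}=\{(1-s)x_0+sx: s\in[0,1]\}$ lies in $\bar{\Omega}$. This part uses no property of $N$.

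For orthogonality I would use a first variation argument at the minimizer $x_0$, splitting into two cases. If $x_0$ is an interior point of $N$ (i.e.\ $x_0\in N\cap\Omega$, so $N$ is a smooth hypersurface without boundary near $x_0$), take any curve $\gamma(s)\subset N$ with $\gamma(0)=x_0$ and $\gamma'(0)=v\in T_{x_0}N$. The function $s\mapsto|x-\gamma(s)|^2$ is smooth and has a minimum at $s=0$, so its derivative $-2\langle x-x_0,v\rangle$ vanishes. As $v$ is arbitrary, $x-x_0\perp T_{x_0}N$; we may assume $x\neq x_0$, otherwise there is nothing to prove.

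The remaining case, $x_0\in\partial N\subset S$, is where the free boundary condition and convexity are both needed, and I expect it to be the only real obstacle. Here $N$ is a manifold with boundary near $x_0$, so curves in $N$ through $x_0$ can only go in the directions of the inward half-space of $T_{x_0}N$. Directions tangent to $\partial N$ still come in $\pm$ pairs, giving $\langle x-x_0,v\rangle=0$ for all $v\in T_{x_0}(\partial N)$. For the inward conormal direction we only get the one-sided inequality $\langle x-x_0,\eta\rangle\le 0$. By the free boundary condition $\nu_N\perp\nu_S$ along $\partial N$, and $T_{x_0}N=T_{x_0}(\partial N)\oplus\mathbb{R}\eta$, where the inward conormal of $N$ at $x_0$ is $\eta=-\nu_S(x_0)$. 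So the inequality reads $\langle x-x_0,\nu_S(x_0)\rangle\ge 0$. On the other hand, since $\Omega$ is convex, it lies on one side of its supporting hyperplane at $x_0$, so $\langle y-x_0,\nu_S(x_0)\rangle\le0$ for all $y\in\bar{\Omega}$; in particular $\langle x-x_0,\nu_S(x_0)\rangle\le0$. The two inequalities force $\langle x-x_0,\nu_S(x_0)\rangle=0$, so $x-x_0$ is orthogonal to all of $T_{x_0}N$. Hence the segment $\overline{x_0x}$ meets $N$ orthogonally, i.e.\ it is parallel to $\pm\nu_N(x_0)$.

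Finally, I would note that the same argument applies at each time slice of a free boundary flow. This is the form in which the lemma will be used to show $D\phi\cdot\nu_S\le0$ for $\phi=(\gamma-d)^3_+$ in the proof of Theorem \ref{thm:0elliptic10.5}.
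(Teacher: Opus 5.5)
Your proof is correct and follows essentially the same route as the paper. You use a first variation at an interior minimizer. At a boundary minimizer you combine the free boundary condition (so $-\nu_S(x_0)$ is the inward conormal of $N$) with convexity ($\langle x-x_0,\nu_S(x_0)\rangle\le 0$) to rule out the one-sided conormal direction. Your splitting $T_{x_0}N=T_{x_0}(\partial N)\oplus\mathbb{R}\nu_S(x_0)$ simply makes explicit the step the paper leaves implicit, namely its assertion that, if the segment were not orthogonal, some curve in $N$ would leave $x_0$ at an acute angle to $\overline{x_0x}$.
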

\begin{proof}[Proof of Lemma \ref{lem:convexinner}]
When $x_0\in \Omega\cap N$ is in the interior, the conclusion holds because otherwise it contradicts the minimizing distance condition by a simple variation. 

Therefore, we only need to consider the case when $x_0\in \partial\Omega\cap N$. And we assume by contradiction that the intersection is not orthogonal. By convexity of $\bar{\Omega}$, we have the segment $\overline{x_0x}\subset \bar{\Omega}$. Moreover, $N$ meets $S$ orthogonally by free boundary condition of $N$, and thus we can always find a curve $\gamma_{x_0}\subset N$ starting at $x_0$ such that $\angle(\overline{x_0x},\gamma_{x_0})<\frac{\pi}{2}$ by our assumption. This again contradicts the minimizing distance condition by a simple variation.

The last statement $\overline{x_0x}\subset \bar{\Omega}$ simply follows from the convexity of $\bar{\Omega}$.
\end{proof}
We will also use the following lemma for technical reasons.
\begin{lemma}[ball avoidance]\label{lem:keepaway}
If $\{\mu_{t}\}$ is a free boundary Brakke flow in $\Omega$ and $\operatorname{spt}(\mu_{t_0})\cap B_{\rho}(x_0)=\emptyset$, then $\operatorname{spt}(\mu_t)$ remains disjoint from $B_{\rho(t)}(x_0)$ for $t_0\le t\le t_0+\frac{\rho^2}{2n}$, where $\rho(t)=\sqrt{\rho^2-2n(t-t_0)}$.
\end{lemma}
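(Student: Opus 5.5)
The plan is to follow Brakke's classical ball-barrier argument, run through the extended test function class of Proposition \ref{prop:c1class}. Put $u(x,t):=\rho(t)^2-|x-x_0|^2$ with $\rho(t)^2=\rho^2-2n(t-t_0)$, and $\phi:=u_+^m$ for a large integer $m$ (say $m\ge 3$). Then $\phi\ge 0$ and $\phi\in C^1_c(\RR^{n+1}\times[t_0,t_1])$ for every $t_1<t_0+\rho^2/(2n)$. Proposition \ref{prop:c1class} applies to $\phi$ even though $D_{\nu_S}\phi\neq 0$ in general, so no Neumann condition is needed. Set $f(t):=\int\phi(\cdot,t)\,d\mu_t$. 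The hypothesis gives $f(t_0)=0$. It suffices to show $f\equiv0$ on $[t_0,t_0+\rho^2/(2n))$: the support of $\phi(\cdot,t)$ is $\overline{B_{\rho(t)}(x_0)}$, so the support conclusion then follows from lower semicontinuity of $\mu_t$ together with the Brakke inequality on subintervals. The time $t_0+\rho^2/(2n)$ itself is handled by a limit.

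\textbf{Step 1 (the pointwise terms).} Compute $\partial_t\phi=-2nm\,u^{m-1}$, $D\phi=-2m u^{m-1}(x-x_0)$ and $D^2\phi=-2m u^{m-1}I+4m(m-1)u^{m-2}(x-x_0)\otimes(x-x_0)$. In the interior case ($B_\rho(x_0)\cap S=\emptyset$) the Brakke inequality together with $\int S^TH\cdot D\phi\,d\mu=-\int \operatorname{div}_V D\phi\,d\mu$ gives
\[
f(b)-f(a)\le\int_a^b\!\!\int\bigl(\partial_t\phi-\operatorname{div}_VD\phi+|(D\phi)^\perp|^2/(4\phi)\bigr)\,d\mu_t\,dt .
\]
Here $\operatorname{div}_V D\phi\ge -2nm u^{m-1}$ up to a term $4m(m-1)u^{m-2}|P_V(x-x_0)|^2$, and this term combines with $|(D\phi)^\perp|^2/(4\phi)=m^2u^{m-2}|(x-x_0)^\perp|^2$. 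Brakke's computation then gives a nonpositive integrand up to $Cm^2u^{m-2}|x-x_0|^2$. That remainder is controlled by $C\,u^{m-2}\le C' \phi^{(m-2)/m}$, and one concludes with a Gronwall/ODE argument on $f$ together with a first-time argument, exactly as in Brakke and Ilmanen.

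\textbf{Step 2 (the boundary term; main obstacle).} When $B_\rho(x_0)$ meets $S$, the identity $\int S^TH\cdot D\phi=-\int\operatorname{div}_VD\phi$ fails, because $D\phi$ is not tangential to $S$. Here I use \cite[Prop 3.2]{edelen18}: $\delta V=-H\mu_V+\nu_S\sigma_V$ with $\sigma_V\ge0$ supported on $S$. Near $S$ I decompose $D\phi=X^T+(D\phi\cdot\nu_S)\nu_S$ in Fermi coordinates, extending $\nu_S$ by the gradient of the signed distance and cutting off. The tangential part is handled by the free boundary first variation (\ref{equ:ede5}). The normal part produces the term $\int (D\phi\cdot\nu_S)\,d\sigma_V$ plus an $H$-normal contribution on $S$. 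The latter vanishes in $S^TH$, since $S^TH=H-1_S(H\cdot\nu_S)\nu_S$.

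I would first try to show that this normal contribution can be rewritten through $\phi$ itself. Applying $\delta V$ to $\psi\,\nu_S$ with $\psi=u_+^{m-1}\cdot(\text{cutoff})$, and using $\sigma_V\ge0$, one would bound $\bigl|\int (D\phi\cdot\nu_S)\,d\sigma_V\bigr|$ by $C\int u_+^{m-1}(|H|+1)\,d\mu_V$, with $C$ depending on the second fundamental form of $S$ near $x_0$. That quantity is in turn absorbed by $-\tfrac12\int|S^TH|^2\phi$ plus $C\int u_+^{m-2}\,d\mu$. Because $m$ is large, these lower powers of $u$ are again dominated by $\phi^{(m-2)/m}$ on the support, so the Gronwall step of Step 1 still closes.

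\textbf{Step 3 (sharpness of the rate).} Finally I must check that this argument does not spoil the rate $\rho(t)^2=\rho^2-2n(t-t_0)$. The extra errors are all of higher order in $u$, so they only enter the ODE for $f$ as a source term that vanishes when $f$ does. Hence $f(t_0)=0$ forces $f\equiv0$ by uniqueness of the zero solution, which is the same mechanism used in Step 1.
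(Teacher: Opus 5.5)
\textbf{The proposal has a genuine gap: the closing ODE argument is invalid, and the sign of the boundary term, which is the key idea, is missing.}

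\textbf{The closing step fails.} Steps 1--3 all lead to an inequality of the form $f(b)-f(a)\le C\int_a^b\!\int u_+^{m-2}\,d\mu_t\,dt\le C'\int_a^b f^{(m-2)/m}\,dt$. From this you conclude $f\equiv0$ by ``uniqueness of the zero solution''. That conclusion is false for a sublinear power: $f(t)=(c(t-t_0))^{m/2}$ with $c=2C'/m$ satisfies $f'=C'f^{(m-2)/m}$ and $f(t_0)=0$. The errors $u^{m-2}$ and $u^{m-1}$ are of \emph{lower} order than $\phi=u^m$ near $\partial B_{\rho(t)}(x_0)$, where $u\to0$, not higher order as Step 3 claims. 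So neither Gronwall nor a first-time argument can absorb them.

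In Step 1 this remainder is self-inflicted. You bounded $-\phi|H|^2$ above by the positive quantity $|(D\phi)^\perp|^2/(4\phi)$. If you just use $-\phi|S^TH|^2\le 0$, your own formulas give the pointwise identity
$\partial_t\phi-\operatorname{div}_VD\phi=-4m(m-1)u_+^{m-2}|P_V(x-x_0)|^2\le0$.
The bulk contribution is then nonpositive and no ODE argument is needed.

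\textbf{The missing idea: the boundary term has a sign.} After integrating by parts with (\ref{equ:fvf}) you get $\int D\phi\cdot n_V\,d\sigma_V$, where $\sigma_V\ge0$, $n_V=\nu_S$ $\sigma_V$-a.e., and $D\phi=-2mu_+^{m-1}(x-x_0)$. If $\Omega$ is convex and $x_0\in\bar\Omega$, then $(x-x_0)\cdot\nu_S(x)\ge0$ on $S$. Both hold in the only application, Theorem~\ref{thm:0elliptic10.5}, where $x_0\in N_{t_*}$. Hence $D\phi\cdot\nu_S\le0$ and the term can simply be dropped. This sign, together with the identity above, is exactly the paper's proof (with $m=3$).

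Your Step 2 never uses convexity, and its size estimate cannot replace the sign:
\begin{itemize}
\item Bounding $|\int D\phi\cdot\nu_S\,d\sigma_V|$ through $\delta V(\psi\nu_S)$ unavoidably brings in $D\psi\sim u^{m-2}$, which is precisely the non-absorbable error above.
\item Your bound also involves the full $|H|$ on $S$. Its $\nu_S$-component is exactly what $S^TH$ discards, so it cannot be absorbed into $-\tfrac12\int|S^TH|^2\phi$.
\end{itemize}

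\textbf{No convexity-free argument can succeed.} For a smooth free boundary flow, the boundary moves along $S$ with velocity $\vec H$, which is tangent to $S$. Take a point $p\in S$ just outside $B_\rho(x_0)$ with $(p-x_0)\cdot\nu_S(p)<0$. Such points exist, for example, for curves in the exterior of a disk, or when $x_0\notin\bar\Omega$. There the curve leaves $p$ pointing away from the ball, so it can lie outside $B_\rho(x_0)$. A large curvature at the endpoint, directed along $S$ toward $x_0$, then moves the endpoint into $B_{\rho(t)}(x_0)$ faster than $\rho(t)$ shrinks. So the sharp-rate statement fails without convexity, and Step 2 has to be replaced by the convexity sign argument.
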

\begin{proof}[Proof of Lemma \ref{lem:keepaway}]
Consider the test function
$$\varphi(x,t)=\left(1-\frac{|x-x_0|^2+2n(t-t_0)}{\rho^2}\right)^3_+.$$
Since $\mu_{t_0}(\varphi)=0$ applying Proposition \ref{prop:c1class} we only need to show 
$$\int_{t_0}^{t_1}\int -\varphi H^2+D \varphi\cdot H +\partial_t\varphi \le 0,$$
for any $t_1\le t_0+\frac{\rho^2}{2n}$. Applying the integration by parts (\ref{equ:fvf}), we get
\begin{multline}\label{equ:differtform0}
 \int\int-\varphi H^2+D \varphi \cdot \vec{H}+\partial_t \varphi d \mu_t \\
= \int\int (\partial_t-\operatorname{div}_{V_t}D-H^2)\varphi d \mu_t+\int\int D \varphi\cdot n_{V_t} d\sigma_{V_t}.
\end{multline}
By \cite[Prop 3.2]{edelen18} and the free boundary condition, the measure $\sigma_{V_t}$ is supported on the boundary $S$, and $n_{V_t}=\nu_S$ at $\sigma_V$-a.e. $x$.
Using this, we will first show that the boundary term has the good sign.
For any boundary point $(x,t)$, the gradient
$$D\varphi=-6\left(1-\frac{|x-x_0|^2+2n(t-t_0)}{\rho^2}\right)^2_+\cdot\frac{x-x_0}{\rho^2}$$
points in the opposite direction as $\overrightarrow{x_0x}$. Together with the convexity of $\Omega$ this yields
\begin{equation}\label{equ:varphibdy}
D \varphi \cdot \nu_S\le 0.
\end{equation}
Moreover, a direct computation yields
\begin{equation}\label{equ:backheat}
(\partial_t-\operatorname{div}_{V_t}D)\varphi=-24\left(1-\frac{|x-x_0|^2+2n(t-t_0)}{\rho^2}\right)_+\cdot\frac{|(x-x_0)^{V_t}|^2}{\rho^4}\le 0.
\end{equation}
Combining (\ref{equ:varphibdy}) and (\ref{equ:backheat}) into (\ref{equ:differtform0}), we achieve the proof of the lemma.

\end{proof}
Now we come to prove Theorem \ref{thm:0elliptic10.5}, which is the generalization of \cite[Sec 10.5]{ilm94}, and we will adjust Ilmanen's proof to the free boundary setting. 
For convenience, we restate our theorem in an equivalent form as follows:
\begin{theorem*}[avoidance principle]
If $\Omega$ is a convex domain, then the support of free boundary Brakke flow in $\Omega$ is a free boundary subsolution.
\end{theorem*}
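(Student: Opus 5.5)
We follow Ilmanen's proof in \cite[Sec 10.5]{ilm94}, now that Proposition \ref{prop:c1class} lets us use test functions without a Neumann condition. Let $\{N_t\}_{t\in[t_0,t_1]}$ be a smooth compact free boundary flow with $(\spt{\MM})_{t_0}\cap N_{t_0}=\emptyset$. Let $d(x,t)=\dist{x,N_t}$ for $x\in\bar\Omega$. By compactness and upper semicontinuity of $\Theta_\mathrm{refl}$, the set $(\spt{\MM})_{t_0}$ is closed, so it has positive distance $2\gamma_0$ from $N_{t_0}$. Lemma \ref{lem:keepaway}, applied to small balls around points of $N_{t_0}$, shows that $\spt\mu_t$ stays a definite distance from $N_t$ for a short time after $t_0$. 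We then argue by contradiction. Let $T$ be the first time at which the support reaches $N_T$. We aim to show that $\int\phi\,d\mu_t$ cannot become positive, where
\[
\phi(x,t)=\bigl(\gamma-d(x,t)-\lambda(t-t_0)\bigr)_+^3
\]
(suitably smoothed), with $\gamma$ small so that $d$ is smooth on $\{0<d<2\gamma\}$ near $N_t$ and $\lambda$ a constant to absorb error terms.

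\textbf{Key steps.} (1) On the region $0<d<\gamma$, we use Lemma \ref{lem:convexinner}: the nearest point to $x$ is reached along a segment in $\bar\Omega$ orthogonal to $N_t$. Hence $d$ is the usual normal-coordinate distance, $Dd$ is the unit normal along that segment, and the standard estimates hold there: $|\partial_t d-\Delta_{N}d|\le C d$ and $|D^2d|\le C$. Points where $d=0$ are excluded, because the support is absent there up to time $T$. (2) We plug $\phi$ into Proposition \ref{prop:c1class} and integrate the term $H\cdot D\phi$ by parts using (\ref{equ:fvf}). This gives, as in (\ref{equ:differtform0}), an interior term $\int(\partial_t-\operatorname{div}_{V_t}D-|H|^2)\phi$ plus a boundary term $\int D\phi\cdot\nu_S\,d\sigma_{V_t}$. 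Note that on $S$ we have $S^TH=H$ $\mu_t$-a.e. off $S$, so the terms match. (3) The interior term is handled exactly as in Ilmanen. We have $\operatorname{div}_{V}Dd\ge$ (mean curvature-type quantity) $-Cd$, and we complete the square with $-|H|^2\phi+H\cdot D\phi$. This yields $\frac{d}{dt}\int\phi\,d\mu_t\le C\int\phi\,d\mu_t$ once $\lambda$ is large. Gronwall then gives $\int\phi\,d\mu_t\equiv0$ on $[t_0,t_1]$, which shows that the support stays at distance at least $\gamma-\lambda(t-t_0)$. Iterating in short time steps covers the whole interval $[t_0,t_1]$.

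\textbf{Main obstacle.} The boundary term needs $D\phi\cdot\nu_S\le0$ on $S$, that is, $Dd\cdot\nu_S\ge0$ at points $x\in S$ with $0<d<\gamma$. By Lemma \ref{lem:convexinner}, $Dd(x)=(x-x_0)/|x-x_0|$ with $x_0\in N_t\subset\bar\Omega$. Convexity of $\Omega$ gives $(x-x_0)\cdot\nu_S(x)\ge0$, since $\bar\Omega$ lies on one side of the supporting hyperplane at $x$. So the boundary term has the good sign, just as in (\ref{equ:varphibdy}). The delicate points are two. First, we must check that $d$ is genuinely $C^2$ up to $S$ near $N_t$; we would verify this using the orthogonality of $N_t$ to $S$, possibly by locally reflecting. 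Second, we need the error in $\operatorname{div}_V D d$ to be controlled uniformly near $\partial N_t$. If $d$ fails to be smooth at the boundary, the fallback is to replace $d$ by a smooth supersolution-type function, or to approximate with $\sup$ over ball barriers from Lemma \ref{lem:keepaway}.
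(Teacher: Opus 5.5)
Your skeleton is the paper's: the extended test class of Proposition~\ref{prop:c1class}, a cubic function of the distance to $N_t$ (modified near $N_t$), Lemma~\ref{lem:keepaway} plus a first-time argument, integration by parts producing a boundary term, and $D\phi\cdot\nu_S\le 0$ from Lemma~\ref{lem:convexinner} and convexity. The boundary part is fine. The gap is in step (3), which you treat as routine but which is where the actual estimate lives. With $s=\gamma-d$ and $\phi=s^3$ (ignore $\lambda$ for now), on the support one has
\[
(\partial_t-\operatorname{div}_{V_t}D)\phi=-3s^2\bigl(\partial_t d-\Delta d+D^2d(\nu_{V_t},\nu_{V_t})\bigr)-6s\bigl(1-(\nu_{V_t}\cdot Dd)^2\bigr).
\]
Your claim that $\operatorname{div}_V Dd$ is a mean-curvature quantity up to an $O(d)$ error is false. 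We have $\operatorname{div}_V Dd=\Delta d-D^2d(\nu_V,\nu_V)$. The part $\partial_t d-\Delta d$ is indeed $O(d)$, and it even has a good sign by Evans--Spruck. But $D^2d(\nu_V,\nu_V)$ has size about $\sup|A_{N_t}|\,(1-(\nu_V\cdot Dd)^2)$, which is $O(1)$ whenever $V$ is transverse to the level sets of $d$.

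Completing the square does not fix this. After integrating $H\cdot D\phi$ by parts, no such term is left to complete. If you keep it instead, $-|H|^2\phi+H\cdot D\phi\le |D^\perp\phi|^2/(4\phi)\approx\tfrac94 s$, an error of order $s$ that no $\lambda s^2$ can absorb. The missing idea is the convexity term. Since $D^2d$ annihilates $Dd$, one has $|D^2d(\nu_V,\nu_V)|\le C(1-(\nu_V\cdot Dd)^2)$. This is dominated by the term $h''(d)|\nabla^V d|^2=6s(1-(\nu_V\cdot Dd)^2)$ coming from $D^2\phi$, once $\gamma$ is small. The integrand is then pointwise $\le 0$, so $\mu_t(\phi)$ cannot increase, and neither $\lambda$ nor Gronwall is needed.

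This matters, because your time-iteration depends on it. If you absorb $D^2d(\nu_V,\nu_V)$ with $\lambda$, using your listed bound $|D^2 d|\le C$, you need $\lambda\ge C\approx\sup|A_{N_t}|$, independent of $\gamma$. The barrier $(\gamma-d-\lambda(t-t_0))_+^3$ then only gives $\operatorname{dist}(\operatorname{spt}\mu_t,N_t)\ge \gamma-\lambda(t-t_0)$, which vanishes at $t_0+\gamma/\lambda$. Restarting at an intermediate time $t'$ with $\gamma'=\gamma-\lambda(t'-t_0)$ reproduces exactly the same linear lower bound. So ``iterating in short time steps'' never gets past $t_0+\gamma_0/\lambda$. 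The iteration would close only if $\lambda=O(\gamma)$, which gives exponential rather than linear decay of the distance bound. That in turn requires controlling $D^2d(\nu_V,\nu_V)$ through the $h''$ term, which is precisely the step missing from your proposal.
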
  
\begin{proof}
 Let $\left\{\mu_t\right\}_{t \in\left[a, b\right]}$ be a free boundary Brakke flow in $\Omega$. Let $\left\{N_t\right\}_{t \in\left[a, b\right]}$ be a smooth compact free boundary mean curvature flow such that spt $\mu_{a} \cap N_{a}=\emptyset$. 
Define
$$d(x,t)=\operatorname{dist}\left(x, N_t\right).$$
Fix $\gamma>0$ so small that
\begin{equation}\label{equ:distaway}
\operatorname{dist}\left(\operatorname{spt} \mu_{a}, N_{a}\right)>\gamma,
\end{equation}
and such that $d(x, t)$ is smooth on the set
$$
\left\{(x, t) \bigg| 0<d(x, t)<\gamma, x \in \Omega, a \leq t \leq b\right\}.
$$
Inspired by \cite[Sec 10.5]{ilm94} and \cite[Thm 34]{hw23}, we define $\phi(x,t)=h(d(x,t))$, where $h(d)$ is a nonnegative $C^2$ function on $[0,+\infty)$ such that 
\begin{equation}\label{equ:phidef}
\begin{cases} 
h(d)=(\gamma-d)^3_+,\text{  for  }d\ge \frac{\gamma}{2}, \\ 
h\equiv \gamma^3,\text{  for  }0\le d\le \frac{\gamma}{4}, \\ 
h(d)>0,\text{ for }d<\gamma, \\
h(d)=0,\text{ for }d \ge \gamma,
\end{cases} 
\end{equation}
and we will denote $s:=(\gamma-d)_{+}$ for simplicity.
Since $d(x,t)$ is smooth in $B_{\gamma}(N_t)\backslash N_t$ and we set $h$ to be a constant in a smaller neighborhood of $N_t$, we have that $\phi=h(d)$ is a globally well-defined $C^2$ function.
Since $\phi>0$ on $N_t$, we only need to show $\mu_{t}(\phi)=0$ for $t \in\left[a, b\right]$ to achieve the proof.
Note that $\mu_{a}(\phi)=0$ automatically holds by (\ref{equ:distaway}).
Let $$t_*=\sup \left\{t \in\left[a, b\right]: \mu_{t^{\prime}}(\phi)=0 \text{ for all }t^{\prime}\in [a,t]\right\}.$$
 Assume by contradiction that $t_*<b$. Then, there exists some $\tau>0$, such that
\begin{equation}\label{equ:phiposi}
\left.\int \phi(\cdot, t) d \mu_t\right|_{t_*}^{t_*+\tau} >0.
\end{equation}
By the semidecreasing property \cite[Prop 4.13]{edelen18}, we have $\mu_{t_*}(\phi)=0$. Thus, $\dist{\operatorname{spt} \mu_{t_*}, N_{t_*}} \geq \gamma$, so $\operatorname{spt} \mu_{t_*}$ is disjoint from any ball $B_{\gamma}(x_0)$ centered at a point $x_0\in N_{t_*}$.
 So, by Lemma \ref{lem:keepaway}, without loss of generality, for the same $\tau>0$ we have
\begin{equation}\label{equ:niceaway}
\dist{\spt{\mu_t},N_t}\ge \frac{\gamma}{2}, \text { for } t\in [t_{\ast},t_{\ast}+\tau].
\end{equation}
Since $\phi$ is a globally defined nonnegative compactly supported $C^1$ function from our construction (\ref{equ:phidef}), we can insert $\phi$ into the Brakke inequality from Proposition \ref{prop:c1class} for $t \in\left[t_*, t_*+\tau\right]$ to get, by applying the first variation formula (\ref{equ:fvf}), that
\begin{equation}\label{equ:differtform}
\begin{aligned}
\left.\int \phi(\cdot, t) d \mu_t\right|_{t_*}^{t_*+\tau} & \leq \int\int-\phi H^2+D \phi \cdot \vec{H}+\partial_t \phi d \mu_t \\
&= \int\int (\partial_t-\operatorname{div}_{V_t}D-H^2)\phi d \mu_t+\int\int D \phi\cdot n_{V_t} d\sigma_{V_t},
\end{aligned}
\end{equation}
similarly as in (\ref{equ:differtform0}).
We will first show that this new boundary term has the good sign.
\begin{claim}\label{cla:bdyterm}
At any boundary point in the region where $\dist{\spt{\mu_t},N_t}\ge \frac{\gamma}{2}$, we have $D \phi\cdot \nu_S \le 0$.
\end{claim}
\begin{proof}[Proof of Claim \ref{cla:bdyterm}]
For any boundary point $(x,t)$ in the well-posed region,
assume the distance is achieved at point $x_0\in N_t$, then by Lemma \ref{lem:convexinner} we know that $D d$ points in the same direction as $\overrightarrow{x_0x}$. Together with the convexity of $\Omega$ this yields
$$D d \cdot \nu_S\ge 0.$$ 
Since $D \phi=-3 s^2 D d$, this implies the assertion.
\end{proof}
The following claim shows that the bulk term has the good sign as well.
\begin{claim}\label{cla:innerterm}
We have $(\partial_t-\operatorname{div}_{V_t}D)\phi\le 0$, provided $\gamma$ is small enough. 
\end{claim}
\begin{proof}[Proof of Claim \ref{cla:innerterm}]
The argument is similar as in \cite[Sec 10.5]{ilm94} and \cite[Thm 34]{hw23}, but for convenience of the reader we provide the details.
Decomposing
$$D\phi=\nabla^{V_t}\phi+(D\phi\cdot \nu_{V_t})\nu_{V_t},$$
where $\nu_{V_t}(x, t)$ is a unit normal to the varifold $V_t$,
we see that
$$\operatorname{div}_{V_t}D\phi=\Delta_{V_t}\phi-D\phi\cdot H.$$
Together with
$$\frac{d}{dt}\phi=\partial_t\phi+D\phi\cdot H,$$
we therefore get
$$\partial_t\phi-\operatorname{div}_{V_t}D\phi=\frac{d}{dt}\phi-\Delta_{V_t}\phi.$$
Recall from \cite[(3.3)]{ecker}, we have the following pointwise calculation
\begin{equation}\label{equ:eckerheat}
(\frac{d}{dt}-\Delta_{V_t})\phi=(\partial_t-\Delta_{\RR^{n+1}})\phi+D^2\phi(\nu_{V_t},\nu_{V_t}).
\end{equation}
Plugging in the expression of $\phi$ from (\ref{equ:phidef}), as we are computing on $\spt{\mu_t}$ thanks to (\ref{equ:niceaway}), we have
$D^2\phi=-3s^2D^2d+6sDd\otimes Dd$, and therefore we obtain 
$$\Delta \phi=-3s^2\Delta d+6s,$$
where we denote $\Delta=\Delta_{\RR^{n+1}}$ for simplicity, and
$$D^2\phi(\nu_{V_t},\nu_{V_t})=-3s^2 D^2d(\nu_{V_t},\nu_{V_t})+6s(Dd\cdot \nu_{V_t})^2.$$
Together with (\ref{equ:eckerheat}) and $\partial_t\phi=-3s^2\partial_t d$ this yields 
\begin{equation}\label{equ:operphi}
(\partial_t-\Delta_{V_t})\phi=-3 s^2\left(\partial_t d-\Delta d+D^2 d(\nu_{V_t}, \nu_{V_t})\right)-6 s\left(1-|\nu_{V_t} \cdot D d|^2\right).
\end{equation}
Now, by Lemma \ref{lem:convexinner}, we know the normal exponential map 
of $N_t\subset \bar{\Omega}$ is given as restriction of the normal exponential map of $N_t$ in $\RR^{n+1}$,
and therefore the local behavior of $d$ will be the same to the case without boundary.
Hence, from \cite[Thm 6.1]{es91} for $d$ small enough, we get
\begin{equation}\label{equ:heatopr}
\partial_t d-\Delta d \geq 0.
\end{equation}
Moreover, since $D^2 d$ is a finite dimensional quadratic form that vanishes in direction $D d$, as can be seen by differentiating $|D d|^2=1$, we have 
\begin{equation}\label{equ:quadwithnvanish}
|D^2 d(\nu_{V_t},\nu_{V_t})|\le C(1-(\nu_{V_t}\cdot D d)^2).
\end{equation}
Substituting (\ref{equ:heatopr}) and (\ref{equ:quadwithnvanish}) into (\ref{equ:operphi}), we conclude that
\begin{equation}\label{equ:operphi2}
(\partial_t-\Delta_{V_t})\phi \leq (3 C s^2 -6 s)(1-(\nu_{V_t}\cdot D d)^2) \le 0,
\end{equation}
provided $\gamma\ge s$ is small enough. This proves the claim.
\end{proof}
Finally, we conclude from (\ref{equ:differtform}) and the two claims that
$$
\left.\int \phi(\cdot, t) d \mu_t\right|_{t_*}^{t_*+\tau} \le 0,
$$
contradicting (\ref{equ:phiposi}). Hence $t_*=b$, and $\mu_t(\phi)=0$ for all $t \in\left[a, b\right]$, proving the theorem. 
\end{proof}

\section{Free boundary inner/outer flow and its matching Brakke flow}
In this section, we will generalize some results of outer and inner flow to our free boundary setting and construct matching free boundary Brakke flows for the outer and inner flow.
\subsection{Outer and inner free boundary flow}
Recall that in (\ref{equ:hwcpam2}) from the introduction we have defined the outer and inner flow in the free boundary setting. Generalizing the results from Appendix of \cite{hw20}, we are going to establish the relationship between $M_t$ and $\partial F_t(K)$ in the free boundary case. Before this, we will prove the following lemma as a technical ingredient.

\begin{lemma}[ball comparison]\label{lem:centerint}
For $x\in \partial\Omega$, and any $r>0$, denoting
\begin{equation}\label{equ:br}
B_r(x):=\{y\in\bar{\Omega}:|y-x|\le r\},
\end{equation}
there exists some $\tau>0$, such that $x\in\inte{F_{t}(B_r(x))}$ for $0<t\le \tau$.
\end{lemma}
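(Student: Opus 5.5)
Since $F_t$ is the maximal subsolution, I will show that some explicit family of closed sets, which contains a relative neighborhood of $x$ up to time $\tau$, is a subsolution starting inside $B_r(x)$. The inclusion property (Proposition \ref{prop:fbprop}(3)) then finishes the proof. The natural candidate is a smooth compact free boundary flow whose enclosed region contains $x$ in its relative interior for a short time. Concretely, I plan to start from a small smooth closed region $D_0\subset B_r(x)$ with $x\in\inte{D_0}$ whose boundary $\Sigma_0=\partial D_0$ (relative to $\bar{\Omega}$) is a smooth compact free boundary hypersurface. One choice is the relative boundary of a ball $B_\rho(x)\cap\bar\Omega$ with $\rho<r$, bent near $S$ so that it meets $S$ orthogonally; for instance, take the level set of the distance to $x$ measured in Fermi coordinates around $S$, which automatically meets $S$ orthogonally.

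By Stahl's short-time existence \cite{stahl96}, $\Sigma_0$ flows smoothly as a free boundary MCF $\{\Sigma_t\}_{t\in[0,\tau_0]}$. Let $D_t$ be the region it bounds, namely the component of $\bar\Omega\setminus\Sigma_t$ containing $x$, together with $\Sigma_t$. By continuity of the smooth flow, there is a $\tau\le\tau_0$ such that $\operatorname{dist}(x,\Sigma_t)\ge\rho/2$ for $t\le\tau$. Hence $x\in\inte{D_t}$ for these times.

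The main step is to show that $F_t(B_r(x))\supseteq D_t$. Since $\Sigma_t$ is smooth, Proposition \ref{prop:fbprop}(1) gives $F_t(\Sigma_0)=\Sigma_t$, and inclusion gives $\Sigma_t\subseteq F_t(B_r(x))$. To get the whole region $D_t$, I would exhaust $D_0$ by the nested smooth free boundary hypersurfaces $\Sigma^{s}_0$, namely the Fermi-distance spheres of radius $s\in(0,\rho]$ around $x$, each of which is contained in $B_r(x)$. For each $s$ we have $F_t(\Sigma^s_0)=\Sigma^s_t\subseteq F_t(B_r(x))$. The union of the $\Sigma^s_t$ should sweep out $D_t$ minus a shrinking core near $x$. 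This is where the difficulty lies: small spheres die quickly. The alternative I would try first avoids this entirely. Use that $F_t(B_r(x))$ is closed and that $\{D_t\}$ itself is a subsolution. Maximality gives $F_t(D_0)\supseteq D_t$, and inclusion then gives $F_t(B_r(x))\supseteq F_t(D_0)\supseteq D_t$.

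So the real obstacle is verifying that $\{D_t\}$ is a free boundary subsolution in the sense of Definition \ref{def:setsubsol}. Suppose $N_{t_0}$ is disjoint from $D_{t_0}$. Then $N_{t_0}$ is disjoint from $\Sigma_{t_0}$, and by the smooth avoidance principle with Neumann condition (\cite{sato94}, or the maximum principle with Hopf lemma at $S$), $N_t$ stays disjoint from $\Sigma_t$. By connectedness of $N_t$ and continuity in $t$, each component of $N_t$ remains outside $D_t$. For a component that is a small closed surface lying inside $D_{t_0}$, disjointness from $D_{t_0}$ already fails, so that case does not arise. With this in hand, $x\in\inte{D_t}\subseteq\inte{F_t(B_r(x))}$ for $0<t\le\tau$.
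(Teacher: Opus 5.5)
Your proposal is correct and follows essentially the same route as the paper: a Fermi-coordinate half-sphere around $x$ that meets $S$ orthogonally, short-time existence of the smooth free boundary flow, and then maximality and inclusion for the level set flow. Your check that the enclosed regions $\{D_t\}$ form a free boundary subsolution, giving $D_t\subseteq F_t(D_0)\subseteq F_t(B_r(x))$, spells out the step the paper compresses into ``in light of Proposition \ref{prop:fbprop}''; this matters because consistency and inclusion alone only give $\Sigma_t\subseteq F_t(B_r(x))$, not the whole enclosed region.
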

\begin{proof}
We construct a smooth free boundary initial surface $N\subset B_r$ which encloses $x$ in the interior,
by describing in Fermi coordinates $(r,y)$ centered at $x$ that $N=\{r^2+|y|^2=\delta^2\}$, with $x=(0,0)$ and $\delta$ small enough. Since $N$ is smooth and encloses $x$, standard short-time existence of smooth free boundary mean curvature flow guarantees that $x$ remains enclosed by $N_t$ for all sufficiently small $t>0$, which in light of Proposition 2.3 implies the assertion.
\end{proof}
We are now ready to establish the following relationship.
\begin{proposition}[inner/outer flow]\label{prop:bdyflowrelation}
Let $F_t(K)$, $M_t$ be as described in the introduction and always denote the boundary operator to be $\partial(\cdot):=\cdot\backslash \inte{\cdot}$. Then, the free boundary level set flow and the outer flow are related as follows:
\begin{enumerate}[(i)]
\item $\partial F_t(K) \subseteq M_t,$
\item $M_t\subseteq F_t(M)$,
\item 
$
M_t=\lim _{\tau \uparrow t} \partial F_{\tau}(K).
$
\end{enumerate}
Properties for $M^{\prime}$ and $K^{\prime}$ are similar.
\end{proposition}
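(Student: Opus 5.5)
The proof follows \cite[Prop A.3]{hw20}, with all interiors and boundaries taken relative to $\bar{\Omega}$ (resp. $\bar{\Omega}\times\RR$). Lemma \ref{lem:centerint} replaces the usual shrinking-ball argument at points of $S$. Throughout, $\mathcal{K}$ denotes the space-time track of $t\mapsto F_t(K)$, and $M_t$ is the time-$t$ slice of $\partial\mathcal{K}$.

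\emph{Step (i).} Let $x\in\partial F_t(K)$, so $x\in F_t(K)$ and every relative neighborhood of $x$ in $\bar{\Omega}$ meets $\bar{\Omega}\setminus F_t(K)$. Suppose $(x,t)$ lies in the space-time interior of $\mathcal{K}$. Then there is a product neighborhood $(B_r(x)\cap\bar{\Omega})\times(t-r^2,t+r^2)\subset\mathcal{K}$, in the notation \eqref{equ:br}. In particular $F_t(K)\supseteq B_r(x)$, which contradicts $x\in\partial F_t(K)$. Hence $(x,t)\in\partial\mathcal{K}$ and $x\in M_t$. This step is pure topology and needs nothing about $S$.

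\emph{Step (ii).} Let $x\in M_t$ and suppose $x\notin F_t(M)$. Since $F_t(M)$ is closed, some relative ball $B_{2r}(x)$ is disjoint from $F_t(M)$. Write $K=M\cup\inte{K}$ and $K'=M\cup\inte{K'}$. I plan to show the following: the set $\bar{\Omega}\setminus F_s(M)$ near $(x,t)$ is split by $F_s(M)$, and on it $F_s(K)$ is locally constant in space-time, meaning it is either entirely contained in $F_s(K)$ or entirely disjoint from it. This is the main obstacle. To prove it, I compare with small free boundary balls, built from Fermi-coordinate spheres as in Lemma \ref{lem:centerint}, sitting inside $B_r(x)$. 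The avoidance property (Proposition \ref{prop:fbprop}(2)) then shows the following. If such a ball at time $t-\delta$ lies outside $F(K)$, it keeps $x$ outside $F_s(K)$ for $s$ near $t$. If the ball lies inside $F(K)$, the semigroup property together with Lemma \ref{lem:centerint} keeps $x$ in the relative interior, and this works even when $x\in S$. In the first case $(x,t)\notin\mathcal{K}$ on a neighborhood; in the second $(x,t)$ is an interior point. Either way $(x,t)\notin\partial\mathcal{K}$, a contradiction. To know which case occurs, I use that $F_s(K)\setminus F_s(M)$ is relatively open and consists of points lying in $F_s(\inte{K})$-type components. The cleanest route is probably the decomposition $F_s(K)=F_s(M)\cup\bigl(F_s(K)\setminus F_s(M)\bigr)$ combined with continuity in time of the complement, proved by barrier balls, exactly as in \cite{hw20}.

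\emph{Step (iii).} By (i), the limit points of $\partial F_\tau(K)$ as $\tau\uparrow t$ lie in $M_t$; for this I use closedness of $\partial\mathcal{K}$ together with the fact, from Step (ii)'s barrier argument, that the time slices of $\partial\mathcal{K}$ are attained from the past. For the converse, take $(x,t)\in\partial\mathcal{K}$. Every space-time neighborhood of it contains points of $\mathcal{K}$ and points outside the interior. Lemma \ref{lem:centerint} together with the semigroup property shows that if $x\in\inte{F_\tau(K)}$ uniformly for $\tau<t$ close to $t$, then $(x,t)$ would be interior. This gives points $x_\tau\to x$ with $x_\tau\in\partial F_\tau(K)$. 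The statements for $M'$ and $K'$ follow by the same argument applied to $K'$.
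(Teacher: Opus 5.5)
Your Step (i) is correct. The genuine gap is in Step (ii), and the approach as set up is circular. Your local argument at $(x,t)$ only closes once you know that, at some time $s<t$ close to $t$, a small relative ball around $x$ lies either in $\inte{F_s(K)}$ or in $\bar\Omega\setminus F_s(K)$. The barrier balls of Lemma \ref{lem:centerint} handle each of these two cases, but they give nothing for a ball that meets $\partial F_s(K)$. All you know is that the ball misses $F_s(M)$. So excluding this straddling case is exactly the claim $\partial F_s(K)\subseteq F_s(M)$, i.e.\ that $F_s(K)\setminus F_s(M)$ is relatively open. That is the tool you invoke at ``to know which case occurs''. But $\partial F_s(K)\subseteq F_s(M)$ is precisely what (i) and (ii) together assert. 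Nothing in the proposal proves it independently: ``continuity in time of the complement, proved by barrier balls'' runs into the same straddling ball, and you do not import a continuous level-set function either.

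The missing idea is the global one the paper uses (and the one in \cite{hw20}): show that $t\mapsto M_t$ is itself a free boundary subsolution with $M_0=M$, then invoke maximality of $F_t(M)$. Take a smooth compact connected free boundary flow $\{N_s\}_{s\in[a,b]}$ with $N_a\cap M_a=\emptyset$. Then $N_a$ lies in one of the two disjoint relatively open sets $\bar\Omega\setminus F_a(K)$ and $\{y:(y,a)\in\operatorname{Int}\mathcal{K}\}$. In the first case, avoidance keeps $N_s$ off $F_s(K)\supseteq M_s$. In the second case, consider the union of space-time tracks of smooth free boundary flows starting in $\operatorname{Int}\mathcal{K}$. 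It lies in $\mathcal{K}$ by consistency, inclusion and the semigroup property, and it is relatively open, hence disjoint from $\partial\mathcal{K}$. This argument needs no a priori information on how $\partial F_s(K)$ sits relative to $F_s(M)$.

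Step (iii) also has a hole in the converse direction. Negating the conclusion gives $\tau_i\uparrow t$ and a fixed $r>0$ with $B_r(x)\cap\partial F_{\tau_i}(K)=\emptyset$. Since the relative ball is connected (by convexity of $\Omega$), it lies either in $\inte{F_{\tau_i}(K)}$ or in the complement of $F_{\tau_i}(K)$. You only rule out the first alternative. The second must be excluded by avoidance together with the semigroup property: this gives $F_{t-\tau_i}(B_r(x))\cap F_t(K)=\emptyset$. On the other hand, $x\in F_{t-\tau_i}(B_r(x))$ for large $i$ by Lemma \ref{lem:centerint}, and $x\in M_t\subseteq F_t(K)$, a contradiction. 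Finally, the forward direction of (iii) needs only (i) and the closedness of $\partial\mathcal{K}$. The ``attained from the past'' property you borrow from Step (ii) is neither needed there nor established.
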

\begin{proof}
(i) simply follows from the definitions and the closeness of $\mathcal{K}$.

(ii) is the generalization of \cite[Prop A.3]{hw20}. We only need to check that $M_t$ is a free boundary subsolution by (\ref{equ:levelsetflow}) in the introduction. Namely, given any smooth compact free boundary mean curvature flow $\{N_t\}_{t\in[a,b]}$ in $\Omega$ with $N_a\cap M_a=\emptyset$, we want to show $N_t\cap M_t=\emptyset$ for all $t \in[a,b]$. We can assume that $N_t$ is connected. Then, by (i), we have either $N_a \subseteq K_a \backslash M_a$ or $N_a\cap F_a(K)=\emptyset$. The latter case will lead to $N_t\cap F_t(K)=\emptyset$ and will imply the result since $M_t\subseteq F_t(K)$. Thus it suffices to check the first case. Similarly as in \cite[Prop A.3]{hw20}\footnote{More precisely, we are following the argument from their preprint arXiv: 1704.00431v3.}, set $\mathcal{G}:=\operatorname{Int}_{\bar{\Omega}\times [a,b]}(\mathcal{K})$ and let $\mathcal{G}^*$ be the union of all space-time sweepouts of smooth free boundary flows
$\{\Sigma_t\}_{t\in [a,b]}$,
such that $\Sigma_a \times\{a\} \subseteq \mathcal{G}$. Then $\mathcal{G}^*\subseteq \mathcal{K}$ is relatively open in $\bar{\Omega} \times [a,b]$ and thus is disjoint from $\partial \mathcal{K}$. So, $N_t\cap M_t=\emptyset$ for all $t \in [a,b]$ as desired.

(iii) is the generalization of \cite[Thm B.2]{hw20}. By (i) and closeness of the space-time track of $M_t$,  we know for every $\varepsilon>0, \partial F_{\tau}(K)$ lies in the $\varepsilon$-neighborhood of $M_{t}$ for all $\tau<t$ sufficiently close to $t$. 
 Conversely, 
suppose by contradiction that there is a sequence $\tau_i \uparrow t$, a point $x \in M_t$, and an $r>0$ such that
\begin{equation}\label{equ:hwcpam24}
\operatorname{dist}\left(x, \partial F_{\tau_i}(K)\right)>r,\quad\text{for all $i$.}
\end{equation}
Fix a $\tau=\tau_i$ sufficiently close to $t$ that $x$ is still in the interior of $F_{t-\tau}(B_r(x))$ which is possible by Lemma \ref{lem:centerint}. Thus, $(x, t)$ is in the interior of the spacetime track $\mathcal{B}$ of $t \in[\tau, \infty) \mapsto F_{t-\tau}(B_r(x))$. Note that $B_r(x)\not\subset F_\tau(K)$, since otherwise we would have $\mathcal{B}\subset \mathcal{K}$, yielding $(x, t)\in\operatorname{Int}_{\bar{\Omega}\times [a,b]}(\mathcal{K})$, which is impossible since $(x, t) \in \partial \mathcal{K}$. Likewise, we also have $B_r(x)\cap F_\tau(K)\neq \emptyset$, since otherwise we would get $\mathcal{B}\cap\mathcal{K}= \emptyset$, which is impossible since $x \in F_{t-\tau}(B_r(x)) \cap M_t$ and $M_t\subset F_t(K)$ by (ii).
Hence, $B_r(x)$ must contain a point in $\partial F_\tau(K)$, contradicting (\ref{equ:hwcpam24}).

\end{proof}
Moreover, we have the following relationship between spacetime tracks of free boundary flows.
\begin{proposition}[intersection of outer and inner flow]\label{prop:intermeasure0}
Suppose $\Omega$ convex and $K\subset \bar{\Omega}$ is a closed set,  such that $\partial K=:M$ is a free boundary hypersurface in $\Omega$, where $\partial(\cdot):=\cdot\backslash \inte{\cdot}$ as usual. Then, the free boundary level set flow $\KK$ starting from $K$, the flow $\KK^{\prime}$ starting from $K^{\prime}=\overline{\bar{\Omega}\backslash K}$ and the flow $\MM$ starting from $M$, satisfy $\KK\cap \KK^{\prime} = \MM$.
\end{proposition}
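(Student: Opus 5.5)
The inclusion $\MM\subseteq \KK\cap\KK'$ is immediate, so the substance of the proof is the reverse inclusion. Since $M=K\cap K'$, the inclusion property in Proposition \ref{prop:fbprop}(3) gives $F_t(M)\subseteq F_t(K)$ and $F_t(M)\subseteq F_t(K')$, hence $\MM\subseteq\KK\cap\KK'$.

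For $\KK\cap\KK'\subseteq \MM$ I would follow the argument of \cite[Thm C.1]{hw20} and \cite[Thm 1.8]{bk23}, that is, show that the family $L_t:=F_t(K)\cap F_t(K')$ is a free boundary subsolution in the sense of Definition \ref{def:setsubsol} with $L_0=K\cap K'=M$. Maximality of the level set flow (Definition \ref{def:levelsetflow}) then gives $L_t\subseteq F_t(M)$ for all $t$, which is the claim.

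To verify the subsolution property, fix a smooth compact free boundary flow $\{N_t\}_{t\in[t_0,t_1]}$ with $N_{t_0}\cap L_{t_0}=\emptyset$. Without loss of generality $N_t$ is connected. I would argue by contradiction via a first touching time. Let $T$ be the first time at which $N_T$ meets $L_T$. Before $T$, each component of $N_t$ lies in one of the disjoint relatively open sets $\bar\Omega\setminus F_t(K)$, $\bar\Omega\setminus F_t(K')$, or in the complement of $L_t$ near points of both. I would refine this by decomposing $N_{t_0}$ into the closed pieces $N_{t_0}\cap F_{t_0}(K)$ and $N_{t_0}\cap F_{t_0}(K')$, which are disjoint because $N_{t_0}$ avoids $L_{t_0}$. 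Since a hypersurface cannot simply be split, the cleaner route is the one from \cite{hw20}: use that $F_t(K)\cup F_t(K')=\bar\Omega$, which holds because $\bar\Omega$ itself is stationary under free boundary level set flow and the flow of a union contains the union of the flows. Hence $N_{t_0}\subseteq (\bar\Omega\setminus F_{t_0}(K))\cup(\bar\Omega\setminus F_{t_0}(K'))$, a disjoint union of relatively open sets, and connectedness places $N_{t_0}$ entirely in one of them, say $N_{t_0}\cap F_{t_0}(K)=\emptyset$. Avoidance, given by the subsolution property of $F_t(K)$ itself, then yields $N_t\cap F_t(K)=\emptyset$, hence $N_t\cap L_t=\emptyset$. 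This argument does not even need a touching time.

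I expect the main obstacle to be the identity $F_t(K)\cup F_t(K')=\bar\Omega$ in the free boundary setting. The inclusion property only gives $F_t(K)\cup F_t(K')\subseteq \bar\Omega$, so the needed direction is a genuine covering statement. I would prove it by contradiction. Suppose $x\notin F_t(K)\cup F_t(K')$. The complements are relatively open, so a small relative ball around $x$ of the form (\ref{equ:br}) is missed by both flows at time $t$. Running the flows backward is not possible, so instead I would use the set $(\bar\Omega\times[0,\infty))\setminus(\KK\cup\KK')$. It is relatively open, empty at $t=0$, and nonempty at time $t$. Near its first time one could compare with a tiny shrinking free boundary sphere, either interior or of the Fermi-coordinate type from Lemma \ref{lem:centerint}, to derive a contradiction.

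If this becomes delicate, the alternative I would try is the viscosity/level set description of Giga--Sato \cite{gs93,sato94}. Take a signed distance-type initial function $u_0$ with $K=\{u_0\ge0\}$ and $K'=\{u_0\le 0\}$. The Neumann viscosity solution $u$ has $F_t(K)=\{u\ge0\}$ and $F_t(K')=\{u\le0\}$, which makes both the covering identity and $\KK\cap\KK'=\{u=0\}=\MM$ immediate. Convexity of $\Omega$ enters only through the validity of the avoidance principles invoked, as in Theorem \ref{thm:0elliptic10.5}.
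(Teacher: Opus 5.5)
\textbf{There is a genuine gap: you never prove the covering identity.} Your reduction is sound. You use $M=K\cap K'$ and the easy inclusion from Proposition \ref{prop:fbprop}(3). Granting $F_t(K)\cup F_t(K')=\bar\Omega$, the two complements are disjoint relatively open sets, so a connected $N_{t_0}$ missing $L_{t_0}$ misses $F_{t_0}(K)$ or $F_{t_0}(K')$, and maximality finishes. But the covering identity is the entire content of the proposition, and none of your three attempts establishes it.
\begin{enumerate}
\item The union argument gives only the trivial inclusion $\subseteq$, as you notice.
\item The first-time argument does not close. Ball comparison (Lemma \ref{lem:centerint} with inclusion and the semigroup property) keeps a point of $\operatorname{Int}F_T(K)$ inside $F_t(K)$ only for a time controlled by its distance to $\partial F_T(K)$. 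This time degenerates exactly at points of $\partial F_T(K)\cap\partial F_T(K')$, which is where a gap between the two flows would have to open. Shrinking spheres also give no backward-in-time information about the open set $(\bar\Omega\times[0,\infty))\setminus(\KK\cup\KK')$. So no contradiction is produced.
\item The viscosity route presupposes that the maximal subsolution of Definition \ref{def:levelsetflow} coincides with the Giga--Sato sublevel sets $\{u\ge0\}$ and $\{u\le0\}$. That identification is neither proved in the paper nor by you. Once it is granted, the proposition is immediate, so this route relocates the difficulty rather than resolving it.
\end{enumerate}

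\textbf{The missing idea is the strict thickening used in the paper.} Choose closed sets $K_j\downarrow K$ and $K'_j\downarrow K'$ with $K_{j+1}\subset\inte{K_j}$ and $K'_{j+1}\subset\inte{K'_j}$. Then $\partial K_j$ is disjoint from $K$ and contained in $K'$. By Proposition \ref{prop:bdyflowrelation}(i),(ii), the slices of $\partial\KK_j$ lie in the level set flow of $\partial K_j$. Avoidance and inclusion then show this flow misses $\KK$ and lies in $\KK'$. Symmetrically, the flow of $\partial K'_j$ misses $\KK'$, so $\KK'\subseteq\operatorname{Int}(\KK'_j)$, with interiors taken relative to $\bar\Omega\times[0,\infty)$. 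Hence $\partial\KK_j\subseteq\operatorname{Int}(\KK'_j)$ and likewise $\partial\KK'_j\subseteq\operatorname{Int}(\KK_j)$. So $\KK_j\cup\KK'_j$ is relatively open and closed in the connected set $\bar\Omega\times[0,\infty)$, and therefore equals it.

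The strict nesting is exactly what produces openness. For $K,K'$ themselves one only gets $\partial F_t(K)\subseteq F_t(M)\subseteq F_t(K')$, with no membership in the interior. The paper then runs your connectedness argument on $\KK_j\cap\KK'_j$ and passes to the decreasing intersection. Equivalently, since both sequences decrease, $\KK\cup\KK'=\bigcap_j(\KK_j\cup\KK'_j)=\bar\Omega\times[0,\infty)$, after which your direct argument for $L_t$ goes through unchanged.
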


\begin{proof}
We will generalize the proof of \cite[Thm 1.8]{bk23}.
We may choose a sequence of closed sets $K_j$ such that $K_{j+1}\subset \inte{K_j}$ and $\bigcap_j K_j=K$. 
Then, by Proposition \ref{prop:fbprop} the free boundary level set flows $\mathcal{K}_j$ starting from $K_j$ satisfy
$$
\mathcal{K}_1\supset \mathcal{K}_2 \supset \ldots, \quad \mathcal{K}=\bigcap_j \mathcal{K}_j .
$$
Similarly, we can choose a sequence of closed sets $K_j^{\prime}$, such that $K^{\prime}_{j+1}\subset \inte{K^{\prime}_j}$ and $\bigcap_j K^{\prime}_j=K^{\prime}$, and consequently
$$
\mathcal{K}^{\prime}_1\supset \mathcal{K}^{\prime}_2 \supset \ldots, \quad \mathcal{K}^{\prime}=\bigcap_j \mathcal{K}^{\prime}_j.
$$
In particular, we have 
$$
\mathcal{K}\cap \mathcal{K}^{\prime} = \bigcap_j \mathcal{K}_{j} \cap \mathcal{K}_{j}^{\prime}.
$$
Now by the avoidance principle and by Proposition \ref{prop:bdyflowrelation} we have $\partial \mathcal{K}_{j} \cap \mathcal{K}=\emptyset$ and $\partial \mathcal{K}_{j}^{\prime} \cap \mathcal{K}^{\prime}=\emptyset$,
where $\partial(\cdot):=\cdot\backslash \operatorname{Int}_{\bar{\Omega}\times [0,+\infty)}(\cdot)$.
 Combined with the inclusion property, this implies $\partial \mathcal{K}_{j} \subset \mathcal{K}^{\prime} \subset \operatorname{Int}_{\bar{\Omega}\times [0,+\infty)}(\mathcal{K}_{j}^{\prime})$ and $\partial \mathcal{K}_{j}^{\prime} \subset \mathcal{K} \subset \operatorname{Int}_{\bar{\Omega}\times [0,+\infty)}(\mathcal{K}_{j})$.
Consequently, any $X\in \KK_j\cup \KK_j^{\prime}$ is either in the interior of $K_j$ or in the interior of $K_j^{\prime}$. Since $\KK_j\cup \KK_j^{\prime}$ is also closed and nonempty, it follows that 
\begin{equation}\label{equ:unionwhole}
\mathcal{K}_{j} \cup \mathcal{K}_{j}^{\prime} =\bar{\Omega} \times[0, \infty).
\end{equation}
Moreover, it also follows that
\begin{equation}\label{equ:interbdy}
\partial\left(\mathcal{K}_{j} \cap \mathcal{K}_{j}^{\prime}\right)=\partial \mathcal{K}_{j} \dot{\cup} \partial \mathcal{K}_{j}^{\prime}
\end{equation}
by the key observation $\mathcal{K}_{j}\backslash \operatorname{Int}_{\bar{\Omega}\times [0,+\infty)}(\mathcal{K}_{j}) \subset \mathcal{K}_{j}\cap \mathcal{K}_{j}^{\prime}\backslash (\operatorname{Int}_{\bar{\Omega}\times [0,+\infty)}\left(\mathcal{K}_{j}\cap \mathcal{K}_{j}^{\prime})\right)$, which is true because of $\partial \mathcal{K}_{j} \subset \mathcal{K}^{\prime} \subset \operatorname{Int}_{\bar{\Omega}\times [0,+\infty)}(\mathcal{K}_{j}^{\prime})$.

Now we are going to show that $\mathcal{I}_j:=\mathcal{K}_{j} \cap \mathcal{K}_{j}^{\prime}$ is a free boundary subsolution.
Let $[a, b]\subset[0,\infty)$ be arbitrary, and let
$
\{\Sigma_t\}_{t\in[a,b]}
$
be a smooth free boundary mean curvature flow with $\Sigma_{a}$ connected such that
$$
\Sigma_{a} \cap \mathcal{I}_{j,a} = \emptyset.
$$
Using (\ref{equ:interbdy}), we see that $\Sigma_a$ is disjoint from either $\KK_{i,a}$ or $\KK_{j,a}^{\prime}$. Hence, $\Sigma_t$ is disjoint from either $\KK_{j,t}$ or $\KK_{j,t}^{\prime}$ for all $t\in[a,b]$.
This proves that $\mathcal{I}_j$ satisfies the free boundary avoidance principle. 
Since these flows form a decreasing sequence of subsets, we obtain that their intersection $\mathcal{K} \cap \mathcal{K}^{\prime}$ is also a free boundary subsolution, so it must be contained in $\mathcal{M}$. On the other hand, $\mathcal{M}$ is contained in both $\mathcal{K}$ and $\mathcal{K}^{\prime}$ by the inclusion property. This proves that
$
\mathcal{M}=\mathcal{K} \cap \mathcal{K}^{\prime},
$
as desired.
\end{proof}

\subsection{The matching free boundary Brakke flow}

In this subsection, we are going to prove Theorem \ref{thm:hwcpamB6} that we can match a nicely behaved free boundary Brakke flow for a given free boundary outer flow. Our proof generalizes the one from \cite[Thm B6]{hw20}, and we will include more details for clarification.
Before proving the theorem, we will provide the following lemma for technical reasons.
\begin{lemma}[free boundary perturbation]\label{lem:phivariation}
Assume $M^0$ is a smooth compact free boundary surface in a convex region $\Omega$. Then, there exists a smooth function $\phi: \bar{\Omega} \rightarrow \mathbb{R}$, such that $\phi^{-1}(0)=M^0$ and $(D\phi)|_{M^0}=\nu_{M^0}$ and the level sets $M^s:=\phi^{-1}(s)$ are also free boundary smooth surfaces in $\Omega$ for any $s\in (-\varepsilon,\varepsilon)$. Moreover, the associated Radon measures
$
\mu_{M^s} := \mathcal H^n\llcorner M^s
$
converge weakly to $\mu_{M^0}$ as $s\to 0$.
\end{lemma}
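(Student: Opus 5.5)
The plan is to build $\phi$ as a signed distance to $M^0$, corrected near $S$ so that its gradient stays tangential to $S$. The level sets $\phi^{-1}(s)$ will then meet $S$ orthogonally, and regularity plus weak convergence follow from the implicit function theorem.

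\textbf{Step 1: a first defining function.} Since $M^0$ is compact, embedded, meets $S$ orthogonally and (as the boundary of a region in $\bar\Omega$) is two-sided, we fix the unit normal $\nu_{M^0}$. Let $d_0$ be the signed distance to $M^0$, which is smooth on a tubular neighborhood of $M^0$ in $\bar\Omega$. Lemma \ref{lem:convexinner} says that for $x\in\bar\Omega$ the nearest point segment $\overline{x_0x}$ is normal to $M^0$ and lies in $\bar\Omega$. Hence $d_0$ on $\bar\Omega$ is the restriction of the signed distance of the smooth hypersurface $M^0$ extended slightly past $S$. It satisfies $d_0^{-1}(0)=M^0$ and $Dd_0|_{M^0}=\nu_{M^0}$. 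The catch is that $Dd_0$ need not be tangent to $S$ away from $M^0$.

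\textbf{Step 2: enforcing the Neumann condition.} On a collar of $M^0\cap S$ we use Fermi coordinates $(r,y)$ adapted to $S$, with $r$ the signed distance to $S$ and $y\in S$. Since $M^0$ is orthogonal to $S$, $M^0\cap\{|r|<\delta\}$ is, near $S$, a graph over $M^0\cap S$ in these coordinates. We then define $\psi(r,y):=d_0(0,y)$ near $S$, i.e.\ extend the boundary trace of $d_0$ constantly in $r$. Because $\partial_r d_0=0$ on $M^0\cap S$ by orthogonality, $\psi$ and $d_0$ agree to first order along $M^0\cap S$. We glue with a cutoff $\chi(r)$ equal to $1$ for $|r|<\delta/2$ and $0$ for $|r|>\delta$, setting
\[
\phi=\chi\psi+(1-\chi)d_0 .
\]
This $\phi$ satisfies $\partial_r\phi=0$ on $S$, so $D\phi\in\mathcal T(S)$. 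A cleaner alternative with the same effect is to replace $d_0$ by $d_0\circ\Pi$, where $\Pi$ is a diffeomorphism of $\bar\Omega$ fixing $M^0$ to first order and equal to $(r,y)\mapsto(0,y)$-type Fermi flattening near $S$.

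\textbf{Step 3: the zero set and the gradient on $M^0$.} We must check that $\phi^{-1}(0)=M^0$ exactly and $D\phi=\nu_{M^0}$ on $M^0$. This is the delicate point, because $\psi$ vanishes on the cylinder $\{(r,y):(0,y)\in M^0\}$ rather than on $M^0$ itself, and in the region $|r|<\delta$ these differ at second order. The fix I would try first is to straighten $M^0$ near $S$. Choose local coordinates in which $S=\{r=0\}$ and $M^0$ is $\{r\text{-independent}\}$. Such coordinates exist: flow $M^0\cap S$ inward along a vector field tangent to $M^0$ and normal to $S$ at $S$, using orthogonality. Then define $\phi$ in these coordinates as the signed normal coordinate, and glue to $d_0$ via a partition of unity. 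Both pieces vanish exactly on $M^0$ with gradient $\nu_{M^0}$ on $M^0$, so the glued function has the same zero set and, after normalizing, $D\phi|_{M^0}=\nu_{M^0}$. Finally extend $\phi$ to all of $\bar\Omega$ by a function that is bounded away from $0$ outside a neighborhood of $M^0$.

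\textbf{Step 4: level sets and convergence.} Since $|D\phi|\ge1/2$ near $M^0$, the implicit function theorem makes $M^s=\phi^{-1}(s)$, for $|s|<\varepsilon$, smooth compact hypersurfaces that are normal graphs $u_s$ over $M^0$ with $u_s\to0$ in $C^\infty$. Because $D\phi$ is tangent to $S$ and normal to $M^s$, each $M^s$ meets $S$ orthogonally, so it is a free boundary surface. Smooth graphical convergence gives $\int f\,d\mu_{M^s}\to\int f\,d\mu_{M^0}$ for every $f\in C_c$, which is the asserted weak convergence. I expect Step 3 to be the main obstacle: constructing coordinates compatible with both $S$ and $M^0$ while keeping exact equality on $M^0$ and the tangency of $D\phi$ along $S$.
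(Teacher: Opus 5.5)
The gap is in the last sentence of Step 3: ``after normalizing, $D\phi|_{M^0}=\nu_{M^0}$.''

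Your straightened-coordinate function is constant along the flow lines of a field $X$ that is tangent to $M^0$ and equal to $-\nu_S$ on all of $S$ near $\partial M^0$. It does satisfy $D\phi\cdot\nu_S=0$ on $S$. But on $M^0$ its gradient is only $\lambda\,\nu_{M^0}$ for some positive function $\lambda$. So the claim that both glued pieces have gradient $\nu_{M^0}$ is false, and dividing by an extension of $\lambda$ destroys the Neumann condition.

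No cleverer choice repairs this, because the two requirements are incompatible in general:
\begin{enumerate}[(1)]
\item Fix $p\in\partial M^0$ and set $\nu=\nu_{M^0}(p)\in T_pS$ and $n=\nu_S(p)\in T_pM^0$; both memberships hold by the free boundary condition.
\item Since $|D\phi|^2\equiv 1$ on $M^0$ and $n$ is tangent to $M^0$ at $p$, differentiating gives $D^2\phi(\nu,n)=0$.
\item Every point of $S$ near $p$ lies on some $M^s$ with $|s|<\varepsilon$, and $D\phi\neq 0$ there. So the free boundary property of all the $M^s$ forces $D\phi\cdot\nu_S\equiv 0$ on a neighborhood of $p$ in $S$. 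Differentiating in the direction $\nu\in T_pS$ gives $D^2\phi(\nu,n)+\langle D_\nu\nu_S,\nu\rangle=0$.
\item Combining, the second fundamental form of $S$ must satisfy $A_S(\nu_{M^0},\nu_{M^0})=0$ along $\partial M^0$.
\end{enumerate}
This fails for every strictly convex $\Omega$. Concretely, take the unit disk and the diameter $M^0=\{x_2=0\}$: the first condition gives $\partial_1\partial_2\phi(1,0)=0$, while the second gives $\partial_1\partial_2\phi(1,0)=-1$. So the statement with the exact unit normal cannot be established by any argument. No normalization in Step 3 can succeed.

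What is true is the version with $D\phi|_{M^0}=\lambda\,\nu_{M^0}$, $\lambda>0$. This is also all the matching theorem uses: nested $K_i$, $\bigcap_i K_i=K$, and weak convergence of $\mathcal H^n\llcorner\partial K_i$. Your Step 3 proves exactly this once you drop the normalization and make the straightening explicit:
\begin{enumerate}[(1)]
\item Use Fermi coordinates $(r,y',w)$ near $\partial M^0$, where $r$ is the signed distance to $S$, $y'$ parametrizes $\partial M^0$, and $w$ is a normal coordinate to $\partial M^0$ inside $S$.
\item Write $M^0=\{w=g(r,y')\}$, with $g(0,\cdot)=0$ and $\partial_r g(0,\cdot)=0$ by orthogonality.
\item Take $\phi_1=w-g(r,y')$. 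Then $\partial_r\phi_1=0$ on $\{r=0\}$, and the level sets of $\phi_1$ are the $w$-translates of $M^0$.
\item Glue $\phi_1$ to $d_0$ with a cutoff that is $\equiv 1$ on a neighborhood of $\partial M^0$. Step 4 then goes through.
\end{enumerate}

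For comparison, the paper flows $M^0$ by a field $V$ equal to $\nu_{M^0}$ on $M^0$ and tangent to $S$, and sets $\phi(\Phi_s(x))=s$. This makes $D\phi=\nu_{M^0}$ on $M^0$ automatic. However, its assertion that $\Phi_s$ preserves the free boundary condition hits the same obstruction: tangency of $V$ to $S$ only yields $\partial M^s\subset S$, not orthogonality. So you should not try to repair your argument by imitating that route either.
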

\begin{proof}
Since $S:=\partial\Omega$ is smooth and $M^0$ is compact, we can pick the neighborhood $D_1\subset\subset D_2 \subset S$ of $S\cap M^0$, and a $\delta>0$, such that the signed distance function $d_S:=d(\cdot,S)$ is smooth in the tubular neighborhood
$$
U_{\delta}=\left\{x\in \RR^{n+1}\bigg||d_S|<\delta\text{ and $d_S$ is achieved only at }D_2\right\}.
$$
Consider the vector field 
$$
\widetilde{\nu}_{S}:=D d_{S}\quad\text{in }U_\delta.
$$
Then, $\widetilde{\nu}_{S}=\nu_{S}$ on $S\cap U_{\delta}$.
Similarly, we can also extend $\nu_{M^0}$, to a smooth vector field $\widetilde{\nu}_{M^0}$ in a tubular neighborhood of $M^0$ in $\bar{\Omega}$. Moreover, 
for $x\in U_\delta$, define the smooth projection
$$
\pi(w)
:= w- \langle \widetilde{\nu}_{S}(x), w\rangle \widetilde{\nu}_{S}(x),
$$
onto the tangent space of the level set of $d_{S}$ through $x$.
Then, by the free boundary condition, we have
$$
\pi(\widetilde{\nu}_{M^0})=\nu_{M^0}\quad\text{on }\partial M^0.
$$
Choose a smooth cutoff function $\chi$ such that
$$
\chi\equiv1 \text{ near } D_1,
\qquad
\chi\equiv0 \text{ away from } U_\delta.
$$
Define
$$
V:=\chi\cdot\pi(\widetilde{\nu}_{M^0})+ (1-\chi)\widetilde{\nu}_{M^0}.
$$
Then, $V$ is smooth and satisfies $V=\nu_{M^0}$ on $M^0$ and is tangential to $S$ along $S$.
Now, let $\Phi_s$ be the flow generated by $V$.
Then, $\Phi_s$ preserves the boundary, $\Phi_s(\partial M^0)\subset S$, and the free boundary conditions. 
Moreover, $\Phi_s$ is a diffeomorphism near $M^0$ for sufficiently small $\varepsilon>0$. In conclusion, 
$$
M^s:=\Phi_s(M^0)\subset\overline{\Omega}
$$
is a smooth hypersurface with $\partial M^s\subset\partial\Omega$ satisfying the free
boundary condition.

Finally, we define $\phi$ in a neighborhood of $M^0$ by
$$
\phi(\Phi_s(x)):=s,\quad x\in M^0,\ |s|<\varepsilon.
$$
Then, $\phi$ is smooth with $D \phi\neq0$ on $M^s$, and
$$
\phi^{-1}(s)=M^s.
$$
Extend $\phi$ smoothly to all of $\overline{\Omega}$ such that the value of $(-s,s)$ will be never taken again and this completes the construction.

Finally, since by construction, we have $M^s \to M^0$ in $C^1$ as $s\to 0$, in particular we have
$
\mu_{M^s} := \mathcal H^n\llcorner M^s \rightharpoonup \mu_{M^0}
$ as $s\to 0$.
\end{proof}

Now, for convenience, we restate our theorem in an equivalent form as follows:
\begin{theorem*}[matching free boundary Brakke flows]
Given $\Omega$ convex and any $K\subseteq \bar{\Omega}$ with $M:=\partial K$ being a smooth compact free boundary surface, set $\mu=\mathcal{H}^n\llcorner\partial K$. Then, there exists a unit-regular, cyclic free boundary integral Brakke flow $\{\mu_t\}_{t\ge0}$, such that $\mu_0=\mu$ and the space-time support of the flow is the space-time set swept out by the outer flow $M_t$. The result for inner flow similarly holds by taking $K^{\prime}=\overline{\bar{\Omega}\backslash K}.$
\end{theorem*}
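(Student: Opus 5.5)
We follow the strategy of \cite[Thm B.6]{hw20}, as outlined in the introduction. First, using Lemma \ref{lem:phivariation} applied to $M^0=M$ with $\nu_{M}$ pointing out of $K$, we take $K_i:=\{\phi\le s_i\}\cup K$ for a sequence $s_i\downarrow 0$ chosen outside a countable exceptional set. The choice is made so that the level set flows of $M^{s_i}$ do not fatten, which is possible since the $F_t(M^{s})$ for distinct $s$ are disjoint by Proposition \ref{prop:fbprop}(2). Then $\partial K_i=M^{s_i}$ is a smooth free boundary surface, $K_{i+1}\subset\inte{K_i}$, $\bigcap_i K_i=K$, and $\mathcal{H}^n\llcorner\partial K_i\rightharpoonup\mu$.

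For each $i$, we apply free boundary elliptic regularization \cite[Sec 9]{edelen18} in $\bar\Omega\times\RR$. This produces $e^{-z/\varepsilon}$-weighted minimizing integral currents $P^{i,\varepsilon}$ with $\partial P^{i,\varepsilon}=\partial K_i$, together with translating flows $\mu^{i,\varepsilon}_t$ and enclosed regions $E^{i,\varepsilon}_t$. The measures $\mu^{i,\varepsilon}_t$ are smooth away from a small singular set, so they are unit-regular, and they are boundaries mod $2$ relative to $S$, so they are cyclic. By Edelen's compactness theorem \cite[Thm 4.14]{edelen18} and Theorem \ref{thm:limitregcyc}, a subsequence converges to $(\tilde\mu^i_t,\tilde E^i_t)$ as in (\ref{equ:0pairconverge}). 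Here $\tilde\mu^i_t=\mu^i_t\times\mathcal{H}^1$, where $\mu^i_t$ is a unit-regular, cyclic free boundary integral Brakke flow with $\mu^i_0=\mathcal{H}^n\llcorner\partial K_i$; the equality of initial data uses the weighted minimizing property to exclude an initial mass drop. Taking $i\to\infty$ and applying compactness and Theorem \ref{thm:limitregcyc} once more gives a unit-regular, cyclic limit $\mu_t$. Lower semicontinuity together with the monotone mass bound $\mu^i_t(\cdot)\le\mathcal{H}^n(\partial K_i)$ then yields $\mu_0=\mu$.

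The support identification is the core of the argument. For the inclusion $\spt{\mu}\subseteq\{(x,t):x\in M_t\}$, Theorem \ref{thm:0elliptic10.5} (convexity is used here) shows that $\spt{\mu^i}$ is a free boundary subsolution starting from $\partial K_i$, hence contained in the track of $F_t(\partial K_i)$. Non-fattening implies $F_t(\partial K_i)=\partial F_t(K_i)$. Since $K_i\downarrow K$ and $\KK_i\downarrow\KK$, any limit point of $\partial\KK_i$ lies in $\KK$. It also lies outside $\operatorname{Int}(\KK)$, because interior points of $\KK$ stay in the interior of $\KK_i$ and are bounded away from $\partial\KK_i$. Hence limit points of $\spt{\mu^i}$ lie in $\partial\KK$. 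For the reverse inclusion, we first show $\partial^*\tilde E^i_t\subseteq\spt{\tilde\mu^i_t}$ by lower semicontinuity of perimeter, using $|D\chi_{E^{i,\varepsilon}_t}|\le\mu^{i,\varepsilon}_t$. Next we show $\tilde E^i_t=F_t(\tilde K_i)$ up to a null set. The region $\tilde E^i_t$ is contained in the level set flow because its boundary support avoids smooth flows by Theorem \ref{thm:0elliptic10.5}. Conversely, the complement of $\tilde E^i_t$ is contained in $F_t(\overline{\bar\Omega\setminus K_i})\times\RR$ by the same argument, and the outer and inner tracks of $K_i$ intersect only in a null set because $\partial K_i$ does not fatten (Proposition \ref{prop:intermeasure0}). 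Now take $(x,t)\in\partial\KK$ with $t>0$. By Proposition \ref{prop:bdyflowrelation}(iii), points of $\partial F_\tau(K)$ with $\tau\uparrow t$ approximate $x$. Near such points both $F_\tau(K_i)$ and its complement have positive measure in every small ball, so $\partial^*E^i_\tau$ meets those balls. Consequently $\spt{\mu^i}$ comes close to $(x,t)$, and passing to the limit, using closedness of supports under Brakke convergence together with the density lower bound from the clearing-out property, gives $(x,t)\in\spt{\mu}$.

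We expect the main obstacle to be the reverse inclusion near the free boundary $S$. There the perimeter and clearing-out arguments must use the reflected density $\Theta_{\rm refl}$, and we must make sure that relative perimeter in $\bar\Omega$, rather than perimeter in $\RR^{n+1}$, is what is controlled by $\mu^i$. Otherwise the portion of $\partial E$ lying on $S$ would contribute spuriously. Our first approach will be to work inside $\Omega$, where the interior arguments of \cite{hw20} apply verbatim, and then recover boundary points by closedness of $\partial\KK$ and of the support. The inner flow case follows by replacing $K$ with $K'$.
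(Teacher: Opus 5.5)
Your outline is the paper's proof. You perturb $M$ via Lemma \ref{lem:phivariation} and pick non-fattening levels by countability. You then run free boundary elliptic regularization, pass to limits with Theorem \ref{thm:limitregcyc}, and prove $\partial^*\tilde E^i_t\subseteq\spt{\tilde\mu^i_t}$ and $E^i_t=F_t(K_i)$ a.e. using avoidance and Proposition \ref{prop:intermeasure0}. Finally you identify the support via Proposition \ref{prop:bdyflowrelation}(iii). Your proof of $\partial^*\tilde E^i_t\subseteq\spt{\tilde\mu^i_t}$ via $|D\chi_{E^{i,\varepsilon}_t}|\le\mu^{i,\varepsilon}_t$ and lower semicontinuity of perimeter is sounder than the paper's. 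The paper invokes Hausdorff convergence of supports, which weak convergence does not give.

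One step is false as written. Non-fattening does \emph{not} imply $F_t(\partial K_i)=\partial F_t(K_i)$. Take $K=\bar\Omega\setminus B_r(p)$ with $\overline{B_r(p)}\subset\Omega$. At the extinction time $T=r^2/(2n)$ you have $F_T(\partial K)=\{p\}$. But $F_T(K)=\bar\Omega$ by closedness of $\KK$, so $\partial F_T(K)=\emptyset$, and nothing fattens. The point $(p,T)$ lies only in the space-time boundary $\partial\KK$. You do not need the identity; use the paper's route instead. Since $\partial K_i\cap K=\emptyset$, avoidance gives $\mathcal{M}_i\cap\KK=\emptyset$, where $\mathcal{M}_i$ is the track of $F_t(\partial K_i)$. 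Also $\mathcal{S}_i\subseteq\mathcal{M}_i\subseteq\KK_i$. Hence limit points of $\mathcal{S}_i$ lie in $\bigcap_i\KK_i=\KK$. They cannot lie in $\operatorname{Int}\KK$, because each interior point has a neighborhood contained in $\KK$ and hence disjoint from $\mathcal{S}_i$.

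Your step ``$\tilde E^i_t$ is contained in the level set flow because its boundary support avoids smooth flows'' also needs more than you state. A region known only up to null sets, with only $\partial^*\tilde E^i_t\subseteq\spt{\tilde\mu^i_t}$ slice by slice, is not automatically a subsolution. You need a representative whose relative topological boundary lies in the support, plus control of $t\mapsto E_t$, so that a smooth flow cannot enter without crossing the boundary. The paper therefore works at the $\varepsilon$-level. It discards the density-zero points of $E^{i,\varepsilon}_0$, so that by Giusti $E^{i,\varepsilon}_t=\operatorname{Int}E^{i,\varepsilon}_t\cup\partial\operatorname{Int}E^{i,\varepsilon}_t$ with the boundary in $\spt{\mu_{P^{i,\varepsilon}_t}}$. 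Since $t\mapsto E^{i,\varepsilon}_t$ is a translation, Theorem \ref{thm:0elliptic10.5} makes it a subsolution from $K_i\times\RR$. Only then does it let $\varepsilon\to0$ in measure. You should do the same, and likewise for $K_i'$, rather than arguing on the $L^1$-limit $\tilde E^i_t$.
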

\begin{proof}
There exist compact regions $K_i$ with smooth boundaries, such that
\begin{enumerate}[(1)]
\item For each $i$, $K_{i+1}\subset \operatorname{Int}_{\bar{\Omega}}(K_i)$.
\item $\cap K_i=K$.
\item $\mathcal{H}^n\llcorner\partial K_i\rightharpoonup \mu.$
\item The free boundary level set flow of $\partial K_i$ never fattens.
\end{enumerate}
Indeed, let $M^s=\phi^{-1}(s)$ be a variation of $M$ as provided by Lemma \ref{lem:phivariation}. Then, considering $\partial K_i=M^{s_i}$ for any strictly decreasing sequence $s_i\to 0$ conditions (1)-(3) are satisfied. 
Moreover, since $\sum\limits_{s}\HH^n(M^s_t)<\infty$, for all but countably many $s$, we get $\operatorname{Int}(M^s_t)=\emptyset$ (first we only get this for all rational $t$, but using the inclusion property from Proposition \ref{prop:fbprop} this actually implies the statement for all $t$). Namely, at most countably many $M^s$ fatten under free boundary level set flow, so choosing $s_i$ suitably condition (4) also holds.

\medskip

We now set up the elliptic regularization following \cite{ilm94,edelen18}. Consider the functional 
\begin{equation}\label{equ:iepsilon}
I_\varepsilon(P)=\frac{1}{\varepsilon} \int e^{-z / \varepsilon} d \mu_P,
\end{equation}
on the space of integral $(n+1)$-currents in $\mathbb{R}^{n+2}$, where $z$ is the $\mathbb{R}$ component of $\mathbb{R}^{n+1} \times \mathbb{R}$, and $\mu_P$ is the mass measure associated to the current $P$, i.e. 
$$
\mu_P(U):=\sup_{|\omega|\le 1,\ \omega\in C_c^{\infty}(\Lambda^{n+1}U)} P(\omega).
$$

Let $P^{i,\varepsilon}$ be a minimizer of $I_{\varepsilon}$ in the class of integral $(n+1)$-currents with $\partial P=\partial K_i\times \{0\}$ and $\mu_P$ supported in $\bar{\Omega}\times [0,+\infty)$.
Consider the integral $(n+1)$-varifold $V^{i,\varepsilon}$ with $\mu_{V^{i,\varepsilon}}=\mu_{P^{i,\varepsilon}}$. Then, as observed in \cite[Prop 9.3]{edelen18} we have 
$$\delta V^{i,\varepsilon}(X)=\int \frac{\partial_z^{\perp}}{\varepsilon}\cdot X d\mu_{V^{i,\varepsilon}}$$
for any $X \in \mathcal{T}\left(\partial \Omega \times \RR \right) \cap C_c^1(\{z>0\})$. 
So, $V^{i,\varepsilon}$ has free boundary in $\partial \Omega\times (0,\infty)$.
Consequently, if we set 
\begin{equation}\label{equ:translatep}
P^{i,\varepsilon}_t:=(\sigma_{-t/\varepsilon})_{\#}P^{i,\varepsilon},
\end{equation}
where $\sigma_s(x,z)=(x,z+s)$, then $\mu_t:=\mu_{P^{i,\varepsilon}_t}$ for $t\in (\varepsilon^{\frac 12},\infty)$ is an $(n+1)$-dimensional integral Brakke flow in $\Omega\times (-\varepsilon^{-\frac 12},\infty)$ with free boundary in $\partial \Omega \times (-\varepsilon^{-\frac 12},\infty)$, c.f. \cite[Lem 8.7]{ilm94}.
 Note that it is unit-regular and cyclic as well. 
Indeed, we can consider the region enclosed by $P^{i,\varepsilon}_t$ and $\{z=-\frac{t}{\varepsilon}\}$, namely a closed set $E^{i,\varepsilon}_t\subset \bar{\Omega}\times \RR$ of locally finite perimeter, such that the induced current $[E^{i,\varepsilon}_t]$ satisfies 
\begin{equation}\label{equ:bydofe}
\partial [E^{i,\varepsilon}_t]\llcorner(\Omega \times \RR)=P^{i,\varepsilon}_t \cup (K_i\times \{-\frac{t}{\varepsilon}\}).
\end{equation}

\medskip

We aim to take the limit for the pair $(\mu^{i,\varepsilon}_t,E^{i,\varepsilon}_t)$ inspired by \cite[Sec 11]{ilm94}, and \cite[Sec 5]{whi-size}.
By \cite[Thm 9.5, Prop 9.6]{edelen18} and Theorem \ref{thm:limitregcyc}, after passing to a subsequence 
$\varepsilon_k\to 0$, the flows $\{\mu_t^{i,\varepsilon_k}\}_{t\in(\varepsilon_k^{\frac12},\infty)}$ converge to a unit-regular, cyclic free boundary integral Brakke flow $\{\tilde{\mu}^i_t\}_{t\in(0,\infty)}$ in $\Omega\times\RR$, which splits as $\tilde{\mu}^i_t=\mu^i_t\times \mathcal{H}^1$, where $\{\mu^i_t\}_{t\in (0,\infty)}$ is an $n$-dimensional unit-regular, cyclic free boundary integral Brakke flow in $\Omega$ starting from $\partial K_i$.
Moreover, by the compactness theorem for $\operatorname{BV}_{\operatorname{loc}}$ (\cite[Thm 5.5]{eg15}), after passing to another subsequence we can assume that the characteristic functions of $E_t^{i,\varepsilon_k}$ converge in $L^1_{\operatorname{loc}}$ to the characteristic function of some closed set $\tilde{E}^i_t$ with locally finite perimeter. As observed by Ilmanen \cite{ilm94}, since $E_{t+h \varepsilon}^{i,\varepsilon}=E_t^{i,\varepsilon} - h e_z$, the limit splits as $\tilde{E}^i_t = E_t^i\times\RR$, where $E_t^i\subset \RR^{n+1}$.
Hereby, we have defined the subsequential convergence of the pair
\begin{equation}\label{equ:pairconverge}
(\mu^{i,\varepsilon_k}_t,E^{i,\varepsilon_k}_t)\stackrel{\varepsilon_k\to0}{\longrightarrow}(\tilde{\mu}^i_t,\tilde{E}^i_t).
\end{equation}


\medskip

We first want to reveal some relation between $\tilde{\mu}_t^i$ and $\tilde{E}^i_t$.
\begin{claim}\label{cla:1}
For any $t>0$, we have
$$\partial^*_{\Omega\times \RR} \tilde{E}^i_t\subseteq \spt{\tilde{\mu}^i_t},$$ 
where
$$\partial^{*}_{\Omega\times \RR} E :=
\left\{
x \in \Omega\times \RR :
\lim_{r \to 0}
\frac{D\chi_E(B_r(x))}{|D\chi_E|(B_r(x))}
\ \text{exists and has unit norm}
\right\}.
$$
\end{claim}
\begin{proof}[Proof of Claim \ref{cla:1}]
For any $x \in \partial^*_{\Omega\times \RR}  \tilde{E}^i_t$, by our definition and by De Giorgi's theorem (c.f. \cite[Thm 15.5, Thm 15.9]{maggi12}), the tangent space of $\tilde{E}^i_t$ at $x$ is a half space.
Moreover, by the Gauss-Green formula (c.f. \cite[Thm 9.3]{maggi12}) the reduced boundary can be identified with the boundary of the induced current.
Then, by the local BV convergence of characteristic functions, there are $x_k \in \spt \partial [E^{i,\varepsilon_k}_t]=\spt{\mu^{i,\varepsilon_k}_t}$, such that
$$
x_k \rightarrow x \text {. }
$$
Since $\mu^{i,\varepsilon_k}_t$ converges to $\tilde{\mu}^i_t$ as Radon measure, $\spt{\mu^{i,\varepsilon_k}_t}$ converges  to $\spt{\tilde{\mu}^i_t}$ in the Hausdorff sense. Thus, we have $x\in \spt{\tilde{\mu}^i_t}$, which proves the claim. 
\end{proof}

\begin{claim}\label{cla:2}
For any $t>0$, we have $\HH^{n+1}(E^i_t \Delta F_t(K_i))=0$.
\end{claim}
\begin{proof}[Proof of Claim \ref{cla:2}]

First, we are going to show $\mathcal{H}^{n+1}(E^i_t \backslash F_t(K_i))=0$ by the avoidance principle.
Since the sets $E^{i,\varepsilon}_t$ are not uniquely defined due to null sets, we refine them as follows.
Define $G^{i,\varepsilon}_0=E^{i,\varepsilon}_0\backslash E^{{i,\varepsilon},(0)}_0$, where 
$$E^{{i,\varepsilon},(\theta)}_0=\{x\in E^{i,\varepsilon}_0|\Theta(E^{i,\varepsilon}_0,x)=\theta\},$$
and where $\Theta$ is the upper volume density. Then, by the Lebesgue density theorem we have $\HH^{n+2}(E^{{i,\varepsilon},(0)}_0)=0$, and by De Giorgi's theorem we have $\partial^* E^{i,\varepsilon}_0\subset E^{{i,\varepsilon},(\frac 12)}_0$. Moreover, by the locally finite perimeter assumption the essential boundary equals the closure of the reduced boundary, i.e. $$E^{i,\varepsilon}_0 \backslash (E^{{i,\varepsilon},(1)}_0 \cup E^{{i,\varepsilon},(0)}_0)=\overline{\partial^* E^{i,\varepsilon}_0},$$ which implies $G^{i,\varepsilon}_0 =\operatorname{Int}_{\bar{\Omega}\times \RR}{E^{i,\varepsilon}_0}\cup \overline{\partial^* (E^{i,\varepsilon}_0)}$. 
So, we refine $E^{i,\varepsilon}_t:=(\sigma_{-t/\varepsilon})_{\#}G^{i,\varepsilon}_0$, which still satisfies (\ref{equ:bydofe}). 
Together with the fact from \cite[Thm 4.4]{giu84} that for any closed set of finite perimeter the closure of the reduced boundary is equal to the boundary of the interior, we get \begin{equation}\label{equ:enicebdy}
E^{i,\varepsilon}_t =\operatorname{Int}_{\bar{\Omega}\times \RR}{E^{i,\varepsilon}_t}\cup \partial \operatorname{Int}_{\bar{\Omega}\times \RR}{E^{i,\varepsilon}_t}.
\end{equation}
Now, fix $\varepsilon>0$, and let $\{\Sigma_t\}_{t\in[t_0,t_1]}$ be a compact smooth free boundary mean curvature flow in ${\Omega}\times [-\frac{t_0}{\varepsilon}, \infty)$ with $\Sigma_{t_0}\cap E^{i,\varepsilon}_{t_0}=\emptyset$. In particular, we have $\Sigma_{t_0}\cap\  \spt{\mu_{P^{i,\varepsilon}_{t_0}}}=\emptyset$. Moreover, since $\mu_{P^{i,\varepsilon}_t}$ is a free boundary Brakke flow as explained in (\ref{equ:translatep}), the avoidance principle of free boundary Brakke flows (Theorem \ref{thm:0elliptic10.5}) gives $\Sigma_{t}\cap\  \spt{\mu_{P^{i,\varepsilon}_{t}}}=\emptyset$ for any $t\in [t_0,t_1]$. Furthermore, thanks to (\ref{equ:enicebdy}), we have in ${\Omega}\times [-\frac{t_0}{\varepsilon}, \infty)$ that $\partial (E^{i,\varepsilon}_{t} \cap \{z\ge -\frac{t_0}{\varepsilon}\}) \subset \spt{\mu_{P^{i,\varepsilon}_{t}}}$ for $t\ge t_0$. Thus, we must also have $\Sigma_{t}\cap E^{i,\varepsilon}_{t}=\emptyset$ for all $t\in [t_0,t_1]$, which shows that the refined $E^{i,\varepsilon}_{t}$ is a free boundary subsolution. Therefore, we have $$\HH^{n+2}(E^{i,\varepsilon}_t\backslash F_t(K_i\times \RR))=0.$$
Taking $\varepsilon\to 0$ and splitting the $\RR$-factor, we finally obtain
$$\HH^{n+1}(E^{i}_t\backslash F_t(K_i))=0,$$
as wished.

Conversely, we want to show $\mathcal{H}^{n+1}(F_t(K_i) \backslash E^i_t)=0$.
Observe that the above argument applied to $K_i^{\prime}:=\overline{\bar{\Omega}\backslash K_i}$  gives $(E^{\prime})^{i}_t\subseteq F_t({K}^{\prime}_i)$ up to a null set. 
We also observe that $E^{i,\varepsilon}_t\cup (E^{\prime})^{i,\varepsilon}_t=\bar{\Omega}\times \RR$, and by further taking $\varepsilon\to 0$ and splitting the $\RR$-factor we infer that $E^i_t\cup (E^{\prime})^i_t=\bar{\Omega}$.
However, by Proposition \ref{prop:intermeasure0} and the nonfattening of $\partial K_i$, we have
\begin{equation}\label{equ:aimofclaim}
\mathcal{H}^{n+1}(F_t(K)\cap F_t(K^{\prime}))\le \mathcal{H}^{n+1}(\KK_t\cap \KK^{\prime}_t)= \mathcal{H}^{n+1}(\MM_t)=0.
\end{equation}
Combining these facts yields $\mathcal{H}^{n+1}(F_t(K_i) \backslash E^i_t)=0$ as desired.
This concludes the proof of the claim.
\end{proof}
Combining Claim \ref{cla:1} and Claim \ref{cla:2}, and since null sets don't affect the reduced boundary, we get
\begin{equation}\label{equ:reducedbdyinmu}
\partial^*_{\Omega}F_t(K_i)\subseteq \spt{\mu^i_t}.
\end{equation}
By further passing to a subsequence in $i$ of $\mu^i_t$, we will obtain a unit-regular and cyclic free boundary Brakke flow $t \mapsto \mu_t$ with $\mu_0=\mu$. Let $\mathcal{S}_i$ and $\mathcal{S}$ be the space-time supports of the flows $t \mapsto \mu^i_t$ and $t \mapsto \mu_t$. This, by definition, implies 
\begin{equation}\label{equ:trivialdef}
\operatorname{spt} \mu^i_t  \subseteq S^i_t.
\end{equation}
 Combining (\ref{equ:reducedbdyinmu}) and (\ref{equ:trivialdef}) and the fact from \cite[Thm 4.4]{giu84}, we get
\begin{equation}\label{equ:usegiu}
\partial (\inte{F_t(K_i)})\cap \Omega \subseteq S^i_t.
\end{equation}
We now conclude similarly as in \cite{hw20}. Let $x \in \partial F_t(K)$ and $\varepsilon>0$. Since $\partial F_t(K)\subset \inte{F_t\left(K_i\right)}$ for all $i$, the ball $B(x, \varepsilon)$ contains a point in $\partial (\inte{F_t\left(K_i\right)}$ and therefore in $S^i_t$ for all large enough $i$. Letting $i \rightarrow \infty$, we see that $\overline{B(x, \varepsilon)}$ contains a point in $S_t$. Since $\varepsilon>0$ is arbitrary, $x \in S_t$. We have shown that $\partial F_t(K_i) \subseteq S_t$ for all $t$. Together with Proposition \ref{prop:bdyflowrelation} (iii) this implies $M_t \subseteq S_t$, i.e.
\begin{equation}\label{equ:outerflowinspt}
\partial \KK\subseteq \mathcal{S}.
\end{equation}
Conversely, by the avoidance principle from Theorem \ref{thm:0elliptic10.5}, we have
$
\mathcal{S} \subseteq \mathcal{K}.
$
Similarly, we have 
$
\mathcal{S}_i \subseteq \mathcal{M}_i,$ and $\mathcal{M}_i\cap \mathcal{K}=\emptyset.
$
Combining them, we get
\begin{equation}\label{equ:hwcpam25-3}
\mathcal{S} \subseteq \partial \mathcal{K}.
\end{equation}
This concludes the proof of the theorem.
\end{proof}

Finally, we can obtain the following corollary from the above proof. The advantange of the corollary is that we don't necessarily have to start from some smooth intial time when matching the flow of some desired density.
\begin{corollary}[restartability with unit density]
Given $\Omega$ convex and any $K\subset \bar{\Omega}$ with $\partial K$ being a smooth compact free boundary surface, and given any $t_* \ge 0$, there exists a unit-regular, cyclic free boundary integral Brakke flow $\MM=\{\mu_t\}_{t\ge t_{*}}$ with $\mu_{t_*}=\HH^n\llcorner M_{t_*}$ and $\overline{\bigcup_{t\ge t_*} \spt \mu_t}=\bigcup_{t\ge t_*} M_t$.
\end{corollary}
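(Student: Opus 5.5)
The plan is to restart the construction in the proof of Theorem \ref{thm:hwcpamB6} at time $t_*$, using the semigroup property (Proposition \ref{prop:fbprop}(4)) to identify the outer flow after $t_*$. We first treat the case where $M_{t_*}$ is again a smooth compact free boundary surface bounding a region in $\bar\Omega$. Set $K_*:=F_{t_*}(K)$, so that $\partial K_*=M_{t_*}$, which is the case where the slice is regular. By the semigroup property, $F_{t_*+s}(K)=F_s(K_*)$. Hence the outer flow of $(K_*,M_{t_*})$, shifted by $t_*$, coincides with $\{M_t\}_{t\ge t_*}$; this uses Proposition \ref{prop:bdyflowrelation}(iii), which expresses $M_t$ as a limit of the $\partial F_\tau(K)$. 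Applying Theorem \ref{thm:hwcpamB6} to $K_*$ and shifting time by $t_*$ then gives a unit-regular, cyclic free boundary integral Brakke flow with $\mu_{t_*}=\HH^n\llcorner M_{t_*}$ whose space-time support is $\bigcup_{t\ge t_*}M_t$.

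For general $t_*$, where $M_{t_*}$ need not be smooth, I would not restart from $M_{t_*}$ directly. Instead I would reuse the approximating flows $\mu^i_t$ from the proof. Each $\mu^i$ starts from $\partial K_i=M^{s_i}$, and its free boundary level set flow does not fatten. Restricting $\mu^i$ to $[t_*,\infty)$ and passing to the subsequential limit already gives a unit-regular, cyclic flow on $[t_*,\infty)$, by Theorem \ref{thm:limitregcyc}. Its support matches $\bigcup_{t\ge t_*}M_t$ by the two inclusions proved there: \eqref{equ:outerflowinspt} gives $\partial\KK\subseteq\mathcal S$, and \eqref{equ:hwcpam25-3} gives $\mathcal S\subseteq\partial\KK$. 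Restricting both inclusions to $t\ge t_*$ yields the support identity, with the closure accounted for by closedness of $\partial\KK$.

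The main obstacle is the initial condition $\mu_{t_*}=\HH^n\llcorner M_{t_*}$, because Brakke flows may lose mass instantaneously. I would first try to show $\mu^i_{t_*}\to\HH^n\llcorner M_{t_*}$. For the upper bound, I would use Proposition \ref{prop:c1class} and semidecreasing mass to control $\mu^i_{t_*}$ through the smooth initial data converging to $\mu$. For the lower bound and unit density, I would use the fact that the support is $M_{t_*}$, together with integrality and unit-regularity: at $\HH^n$-a.e.\ point of $M_{t_*}$ the density is at least $1$, and cyclicity combined with Claim \ref{cla:2} forces odd, hence nonzero, multiplicity along $\partial^*F_{t_*}(K)$.

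If matching the exact measure fails in general, the fallback is to use the freedom to redefine the Brakke flow at the single time $t_*$. Setting $\mu_{t_*}:=\HH^n\llcorner M_{t_*}$ keeps the Brakke inequality \eqref{equ:ede12} valid forward in time, provided $\HH^n\llcorner M_{t_*}\ge\lim_{t\downarrow t_*}\mu_t$. That inequality is exactly the upper bound I would need to verify via the density estimates above. This is where I expect the real difficulty to lie.
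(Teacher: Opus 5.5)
Your smooth case is fine, but the general case has a genuine gap at exactly the point you flag.

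\textbf{The gap.} Restricting the approximants $\mu^i_t$ (which start at time $0$ from $\partial K_i$) to $[t_*,\infty)$ and passing to the limit only reproduces the flow of Theorem \ref{thm:hwcpamB6} on $[t_*,\infty)$. So $\mu_{t_*}$ is whatever slice that construction produces at the prescribed time, and nothing forces it to equal $\HH^n\llcorner M_{t_*}$. None of your proposed comparisons closes this.
\begin{itemize}
\item Upper bound: the semidecreasing property says only that $t\mapsto\mu_t(\phi)-C(\phi)t$ is nonincreasing. That controls $\mu_{t_*}$ by $\mu_0$, not by $\HH^n\llcorner M_{t_*}$. Excluding multiplicity $\ge 2$ at time $t_*$ is a multiplicity-one statement not available here, and cyclicity permits even multiplicity.
\item Lower bound: equality of space-time supports does not give $\spt{\mu_{t_*}}=M_{t_*}$ at a fixed time. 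The Brakke inequality allows sudden mass loss, and unit-regularity excludes it only at density-one points, which need not be $\HH^n$-a.e.\ on $M_{t_*}$.
\item Densities: $\Theta_{\mathrm{refl}}\ge 1$ is a statement about the flow before $t_*$, not about the density of the measure $\mu_{t_*}$. Integrality of $V_t$, and the odd multiplicity along $\partial^*F_{t_*}(K)$ you want from cyclicity, hold only for a.e.\ $t$, not at the given $t_*$.
\item Fallback: redefining $\mu_{t_*}:=\HH^n\llcorner M_{t_*}$ is compatible with \eqref{equ:ede12}, but it needs precisely the missing bound $\lim_{t\downarrow t_*}\mu_t\le\HH^n\llcorner M_{t_*}$.
\end{itemize}

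\textbf{The missing idea.} Restart the construction at $t_*$ instead of restricting it. In the proof of Theorem \ref{thm:hwcpamB6}, smoothness of the initial surface is used only to build the perturbed regions $K_i$ (Lemma \ref{lem:phivariation} plus non-fattening). The elliptic regularization itself needs only an integral boundary.

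So the paper keeps the same $K_i$ and minimizes $I_\varepsilon$ with $\partial P^{i,\varepsilon}=\partial F_{t_*}(K_i)\times\{0\}$. The new approximating flows then start at time $t_*$ from the multiplicity-one boundary measures of the non-fattening regions $F_{t_*}(K_i)$. By the semigroup property $F_{t_*+s}(K_i)=F_s(F_{t_*}(K_i))$, the rest of the argument (Claims \ref{cla:1}--\ref{cla:2}, the avoidance principle, Proposition \ref{prop:bdyflowrelation}) runs on $[t_*,\infty)$ as before.

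In this route, unit density is built into the initial data rather than extracted from a slice you do not control. One point remains to check there: that these initial measures converge to $\HH^n\llcorner M_{t_*}$ as $i\to\infty$, which the paper asserts without detail. Your smooth case is just the special instance of this restart in which $\partial F_{t_*}(K)$ is itself smooth.
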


\begin{proof}
Note that we cannot directly apply Theorem \ref{thm:hwcpamB6}, since in the theorem we require the initial surface to be smooth, which is not guaranteed in the process.
However, we only require smoothness in the approximation of the intial surface in the proof and we don't require it while doing the elliptic regularization.
Based on this, we can simply take the perturbation $K_i$ of $K$ constructed above, and start the elliptic regularization with initial condition $\partial P^{i,\varepsilon}=\partial F_{t_*}(K_i)\times \{0\}$. The above proof then gives $\MM=\{\mu_t\}_{t\ge t_*}$ with $\mu_{t_*}=\HH^n\llcorner M_{t_*}$ as desired.
\end{proof}

\bibliographystyle{alpha} 
\bibliography{references}
Yueheng Bao, Department Of Mathematics, University Of Toronto,
40 St. George Street, Toronto, On M5S 2E4, Canada\\
email: bao624@math.utoronto.ca
\end{document}